\providecommand{\diam}{\mathop{\rm diam}\nolimits}
\newtheorem{theorem}{Theorem}[section]
\newtheorem{corollary}[theorem]{Corollary}
\newtheorem{lemma}[theorem]{Lemma}
\newtheorem{proposition}[theorem]{Proposition}
\newtheorem{question}[theorem]{Question}
\newtheorem{conjecture}[theorem]{Conjecture}
\theoremstyle{definition}
\title{Contractive Families on Compact Spaces}
\author{Luka Mili\'{c}evi\'{c}}
\begin{document}
\date{}\maketitle
\footnotesize 
\begin{center}\textcolor{blue}{This is a preprint and not an identical copy of a paper `Contractive Families on Compact Spaces' of the same author, to appear in \emph{Mathematika}}\end{center}
\normalsize
\vspace{6pt}
\begin{abstract} A family $f_1,...,f_n$ of operators on a complete metric space $X$ is called contractive if there exists $\lambda < 1$ such that for any $x,y$ in $X$ we have $d(f_i(x),f_i(y)) \leq \lambda d(x,y)$ for some $i$. Stein conjectured that for any contractive family there is some composition of the operators $f_i$ that has a fixed point. Austin gave a counterexample to this, and asked if Stein's conjecture is true if we restrict to compact spaces. Our aim in this paper is to show that, even for compact spaces, Stein's conjecture is false. \end{abstract}
\let\thefootnote\relax\footnote{2000 Mathematics Subject Classification 47H09 (primary), 54E45(secondary)}
\section{Introduction}
Let $(X,d)$ be a (non-empty) complete metric space. Given $n$ continuous functions $f_1, f_2, \dots, f_n : X \rightarrow X$, and a real number $\lambda \in (0,1)$, we call $\{f_1, f_2, \dots, f_n\}$ a \emph{$\lambda$-contractive} family if for every pair of points $x, y$ in $X$ there is $i$ such that $d(f_i(x), f_i(y)) \leq \lambda d(x,y)$. Further, we say that  $\{f_1, f_2, \dots, f_n\}$ is a \emph{contractive} family if it is a $\lambda$-contractive family for some $\lambda \in (0,1)$. In particular, when $f$ is a function on $X$ and $\{f\}$ is a contractive family we say that $f$ is a \emph{contraction}. Recall the well-known theorem of Banach~\cite{Banach} which says that any contraction on a complete metric space has a unique fixed point.\\
In~\cite{Stein}, Stein conjectured the following generalisation of the theorem of Banach:\\
\indent Let $\{f_1, f_2, \dots, f_n\}$ be a $\lambda$-contractive family on a complete metric space. Then some composition of $f_1, f_2, \dots, f_n$ (i.e. some word in $f_1, \dots,f_n$) has a fixed point.\\
In ~\cite{Austin}, Austin constructed a counterexample to this statement.
\begin{theorem}[\cite{Austin}] There is a complete metric space $(X,d)$ with a contractive family of operators $\{f, g\}$, such that no word in $f, g$ has a fixed point.
\end{theorem}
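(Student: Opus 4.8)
To prove that Stein's conjecture fails even on compact spaces, the plan is to construct by hand a compact metric space $(X,d)$ together with two continuous self-maps $f,g$ such that $\{f,g\}$ is $\lambda$-contractive for some explicit $\lambda<1$, while every word $w$ in $f,g$ satisfies $\inf_{x\in X}d(x,w(x))>0$. Since $X$ is compact this infimum is attained and positive, so $w$ has no fixed point. This reformulation is the organising principle: on a compact space a counterexample must make every word displace every point by a definite amount, and the obvious mechanism for this --- a monovariant strictly increased by each application of $f$ or $g$ --- produces an unbounded quantity and so cannot live on a compact space. In particular one cannot merely compactify Austin's example: the limit points one is forced to add tend to be fixed by words, so a genuinely new construction is needed.

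For the underlying space I would take a countable skeleton $S$ indexed by (a subset of) the grid $\grid=\mathbb{N}_0^3$, metrised through a nested sequence of \emph{finite} partitions into ``cells'': writing $x\cont{r}y$ for ``$x$ and $y$ lie in a common scale-$r$ cell'' and letting $r(x,y)$ be the largest such $r$, one sets $d(x,y)$ comparable to $\lambda^{r(x,y)}$, suitably perturbed. The metric completion $X$ is then compact, total boundedness following from finiteness of the partitions together with cell diameters tending to $0$. The two spatial coordinates of the grid are meant to record the independent progress of two competing refinement processes, one advanced by $f$ and one by $g$, while the third coordinate is a phase coordinate which at each scale decides which of the two maps is entitled to perform the next refinement. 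A feature I would deliberately build in is that $d$ is \emph{not} an ultrametric: the inter-cell distances are tuned so that inside a scale-$r$ cell the map $f$ strictly contracts those pairs that differ in the ``$g$-direction'' and $g$ strictly contracts those that differ in the ``$f$-direction'', which is exactly what makes the family contractive without either $f$ or $g$ being a contraction on its own.

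The maps are then defined on $S$ by explicit operations on the three grid coordinates which advance the appropriate refinement process by one step while respecting the cell structure; being Lipschitz on $S$, they extend continuously to $X$. Contractivity is verified by a short case analysis on $r=r(x,y)$: the phase coordinate at scale $r$ identifies which of $f,g$ is ``in charge'' there, and for that map the scale-$r$ cell through $x$ and $y$ is carried into a single scale-$(r+1)$ cell, so that $d(f_i(x),f_i(y))\le\lambda\,d(x,y)$; the non-ultrametric perturbation of $d$ is what handles the residual pairs that already agree up to a finer scale in the other direction.

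Ruling out fixed points is where I expect the real difficulty to lie. For each word $w$ one must exhibit a \emph{bounded-depth} witness that $w(x)\ne x$ --- an aperiodicity-type invariant that, for every $x\in X$, detects a discrepancy between $x$ and $w(x)$ by inspecting only finitely many scales, the number of scales depending on the length of $w$ alone --- together with the resulting uniform estimate $d(x,w(x))\ge\varepsilon_w>0$ on $S$, which passes to all of $X$ by density and continuity. Concretely, a word of length $\ell$ forces exactly $\ell$ refinement steps, apportioned between the two processes according to its letters and the phase coordinate, so no skeleton point can be returned to itself; the substance is that this is already visible, uniformly in $x$ and at a depth controlled by $\ell$, even at the limit points. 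The heart of the construction is thus to design the partitions and the phase coordinate so that the contractivity case analysis and this uniform, bounded-depth no-return property hold for one and the same metric --- and balancing those two demands against each other is the main obstacle.
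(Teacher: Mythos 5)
Your opening reduction is fine (on a compact space a continuous word $w$ has no fixed point iff $\inf_{x}d(x,w(x))>0$), but what follows is a plan rather than a proof: the nested partitions, the phase coordinate, the maps, and the ``suitable perturbation'' of $\lambda^{r(x,y)}$ are never specified, and you yourself flag the only step carrying real content --- making a single metric satisfy both the contraction case analysis and the uniform no-return estimate at limit points --- as ``the main obstacle'' without resolving it. That obstacle is exactly where such a construction lives or dies. On the natural skeleton (finite words over $\{a,b\}$ with $f,g$ prepending a letter, which is essentially your two-refinement picture), the dangerous points of the completion are the limits of the prefixes $w^n$ of $w^\infty$: if for some nonempty $w$ that sequence is Cauchy, its limit is fixed by the word $w$. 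More precisely, a fixed point of $w$ in the completion yields, by interleaving $(x_n)$ and $(wx_n)$, a Cauchy sequence forcing infinitely many small scales to contain some $u$ together with $wu$; so one must build the scales so that for every nonempty $w$ only finitely many of them do (property \textbf{A6} of Corollary~\ref{sumCor}). Your ``bounded-depth witness'' is precisely an assertion that this can be arranged, with no mechanism offered. In the paper the mechanism is genuinely combinatorial: words that are prefixes of some $u^\infty$ of length greater than $l(u)^2$ are declared forbidden, and the scales are three explicit families ($W$-, $A$- and $B$-type sets, the last glued along the arithmetic progressions $I_k$) chosen so that \textbf{A1}--\textbf{A6} all hold; verifying \textbf{A6} is a real argument about periodic and minimal words, not bookkeeping, and nothing in your sketch substitutes for it.

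Two smaller points. Your claim that the metric must not be an ultrametric --- that the non-ultrametric perturbation ``is exactly what makes the family contractive'' without either map being a contraction --- is wrong as stated: the paper's counterexample is an ultrametric, and contractivity comes from the fact that at every scale at least one letter maps the current cell into a strictly smaller one (property \textbf{A5}), which is close to your ``phase'' idea but needs no perturbation. Second, your fixed-point heuristic on the skeleton (``a word of length $\ell$ forces exactly $\ell$ refinement steps, so no skeleton point returns to itself'') is the monovariant you correctly dismissed in your first paragraph: it says nothing about the limit points of the completion, which is the only place a fixed point could occur, so the uniform estimate $d(x,w(x))\geq\varepsilon_w$ on all of $X$ does not follow from it and remains unproved.
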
 
Furthermore, Austin asked if this is possible in a compact space.
\begin{question}[\cite{Austin}] Does every contractive family of operators on a compact space have a composition with a fixed point?
\end{question}
The purpose of this paper is to show that even with the additional assumption of compactness, there still need not be a fixed point.\\ 
\begin{theorem}\label{thm1} There is a compact metric space $(X,d)$ with a contractive family of operators $\{f, g\}$, such that no word in $f, g$ has a fixed point.\end{theorem}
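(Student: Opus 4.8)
The plan is to obtain $X$ as the completion of a countable metric space carried by a combinatorial skeleton on the grid $\grid=\mathbb{N}_0^3$, defining $f$ and $g$ first on the skeleton and then extending them continuously to the (compact) completion. Austin's non-compact counterexample quoted above already isolates the combinatorial core: one wants a system of ``states'' with two moves so that along any word one never revisits a state, while any two states are driven together by one of the two moves. The genuinely new issue for a compact space is that the infinitely many ``directions to infinity'' of such a skeleton have to be folded into a small, totally bounded boundary, and this boundary must itself carry fixed-point-free dynamics of $f$ and $g$.

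Step 1 (skeleton). I would take the underlying set to be a suitable subset of $\grid$ organised into finite ``blocks'', with the third coordinate indexing a scale, so that blocks at consecutive scales are joined by a gluing relation $\cont{\ast}$ attaching the top of one block to the bottom of the next; the bracket $\tw{x}$ records the block containing $x$. The operators act inside a block by incrementing or resetting the first two coordinates --- $f$ advances along the first coordinate, carrying over into the second on overflow, while $g$ steps along $\cont{\ast}$ into a deeper block --- so that a long word is forced to march steadily outward. I would arrange combinatorially that (a) no word fixes a grid point, because each generator strictly raises a lexicographic rank along every orbit, and (b) for any two grid points some generator decreases the ``block distance'' between them, eventually placing them in a common cell.

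Step 2 (metric and the constant). On the skeleton I would put a metric in which lying in the block at scale $k$ contributes weight of order $\mu^{k}$ for a fixed $\mu\in(0,1)$, with positions inside a block separated on a finer scale; then $\diam X<\infty$ and $X$ is totally bounded, so its completion $\overline X$ is compact, the new points being exactly the ``ends'' of the block tower and forming a Cantor-type set. The weights must be tuned so that the generator supplied by Step~1(b) contracts $d(x,y)$ by a uniform factor $\lambda<1$ while the other is merely non-expansive; extending $f,g$ to $\overline X$ is then forced by uniform continuity.

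Step 3 (no fixed points; the obstacle). For interior points this is immediate from Step~1(a). \textbf{The main obstacle is the boundary}: a compact metric space strongly favours fixed points --- by Edelstein's theorem a self-map that strictly shortens every nonzero distance already has one --- so the boundary dynamics must be built so that each word acts on $\overline X\setminus X$ like an odometer, or at least with a strict drift down the tower, excluding boundary fixed points, while one simultaneously checks that the contraction constant stays bounded away from $1$ uniformly up to the boundary, which is the delicate point in choosing the weights $\mu^{k}$. Granting this, together with compactness of $X=\overline X$ and the $\lambda$-contractivity of $\{f,g\}$, Theorem~\ref{thm1} follows.
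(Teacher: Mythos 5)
Your outline stops exactly where the real work begins. Step 3 concedes that the ``main obstacle is the boundary'' and then says ``granting this\dots the theorem follows''; but what is being granted is precisely the content of the theorem. The difficulty is not generic boundary dynamics: for any metric in which long words march outward at geometric scales, the obvious candidate limit of the prefixes of a periodic infinite word $w^\infty$ would, if it existed in the completion, be a fixed point of the word corresponding to $w$ (e.g.\ the limit of $a^n$ is fixed by $f$, the limit of $(ab)^n$ by the word $fg$). So one must engineer the metric so that, for \emph{every} nonempty word $w$, either the prefixes of $w^\infty$ fail to be Cauchy or their limit points carry fixed-point-free dynamics --- and simultaneously keep the family uniformly $\lambda$-contractive on all pairs (including pairs near those directions) and keep $f,g$ Cauchy-preserving so they extend continuously. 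Your proposal asserts that one can ``tune the weights $\mu^k$'' and make each word act ``like an odometer'' on the ends, but gives no mechanism, and a single scale parameter per block will not do it: naive geometric weights make the periodic directions converge and hand every word a fixed point, while crudely destroying those Cauchy sequences tends to destroy either total boundedness, continuity of the extensions, or the uniform contraction constant. Your Step 1(b) claim that some generator uniformly decreases ``block distance'' for every pair is likewise asserted, not arranged. (Also, the appeal to Edelstein is only heuristic: no single composition need strictly shrink all distances, so it neither helps nor obstructs.)

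For comparison, the paper's construction is devoted entirely to this point. It works on the set of finite words over $\{a,b\}$ with $f,g$ prepending letters, and instead of choosing pairwise distances it chooses a nested family of ``diametrisable'' sets $S_k$ of diameter $\lambda^k$ (an ultrametric-type diameter space), reducing the theorem to six combinatorial properties \textbf{A1}--\textbf{A6} of the family (Corollary~\ref{sumCor}): \textbf{A1}--\textbf{A3} give a compact completion, \textbf{A4} gives continuity of the extensions, \textbf{A5} gives $\lambda$-contractivity, and \textbf{A6} --- the crux you defer --- forbids any diametrisable set from containing both $u$ and $wu$ beyond finitely many scales, which rules out fixed points of every word in the completion. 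Achieving \textbf{A6} compatibly with \textbf{A4}--\textbf{A5} is exactly where the forbidden/available words and the $A$- and $B$-type sets enter: the neighbourhoods of each periodic direction $w^\infty$ are split along arithmetic progressions of lengths (the sets $A_{w,p,r}$ and $B_{w,k}$ with the progressions $I_k$), so that the ends in that direction form an odometer-like $2$-adic tower on which shifting by $l(w)$ has no fixed point, while the splits are fine enough to preserve contraction and coarse enough to preserve continuity. Your sketch names the right target (odometer-like ends, no revisiting of states, uniform contraction up to the boundary) but supplies none of this construction or its verification, so as it stands it is a research plan rather than a proof.
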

We remark that our construction provides the counterexample for any given $\lambda \in (0,1)$.

\section{Outline of the construction of the counterexample}
Since our construction of the counterexample which proves the Theorem~\ref{thm1} is fairly complicated, we provide some motivation for our steps. Suppose that we are in the setting of Theorem~\ref{thm1}. Given any set $S$ of points in our space, since $X$ is compact, we know that $S$ is bounded. Let $D$ be its diameter. Then, it is not hard to show that the diameter of one of $f(S)$ or $g(S)$ is at most $4\lambda D$. To see this, pick any point $x$ in $S$ and consider set $S_f = \{y \in S: \lambda d(x,y) \geq d(f(x), f(y))\}$ and let $S_g = S \setminus S_f$. If $S_f = S$, we are done, so suppose that $S_g \not=\emptyset$. If we can find $y\in S_f$ such that $\lambda d(y, z) \geq d(f(y), f(z))$ for all $z \in S_g$, then, by looking at distance form $f(y)$, the diameter of $f(S)$ does not exceed $4\lambda D$. On the other hand, if there is no such $y$, then each point of $g(S_f)$ is on distance at most $\lambda D$ from some point in $g(S_g)$ which are on distance at most $\lambda D$ from $g(x)$, so $g(S)$ has diameter at most $4\lambda D$.\\
This simple observation leads us to the idea that instead of considering distances between each pair of points in the wanted space, thinking about diameters of sets should be much more convenient in our problem. With this in mind, we develop the notion of `diameter spaces', which will play a key role throughout our construction. Due to their importance, we include the proper definition.\\

Let $X$ be a non-empty set. Given a collection $\mathcal{D}$ of subsets of $X$, we call $(X, \mathcal{D})$ a \emph{diametrisable space} provided the following conditions are met,
\begin{enumerate}
\item Given $x, y \in X$ there is $U \in \mathcal{D}$ with $x, y \in U$
\item If $U, V \in \mathcal{D}$ and $U \cap V \not=\emptyset$ then $U \cup V \in \mathcal{D}$
\end{enumerate}
We refer to elements of $X$ as points and elements of $\mathcal{D}$ as \emph{diametrisable sets}. Further, if a function $\diam:\mathcal{D} \rightarrow \mathbb{R}_{\geq0}$ is such that given $U, V \in \mathcal{D}$ with a nonempty intersection, we have that \[\diam(U) + \diam(V) \geq \diam(U \cup V)\] holds, we call $\diam$ the \emph{diameter} and $(X, \mathcal{D}, \diam)$ a \emph{diameter space}. We refer to this inequality as the \emph{triangle inequality for the diameter spaces}.\\

In a very natural way, one can use a diameter space to induce a pseudometric on the underlying space, by simply finding the infimum of diameters of all the diametrisable sets containing any two given distinct points. Furthermore, by imposing suitable conditions on the diametrisable sets, one can get nice properties to hold for the pseudometric space.\\
In order to continue further, we must first specify the underlying set. Hence, let us look for the space that should, in some vague sense, be the minimal counterexample. One of the possible ways to approach this issue is to fix a point $x_0$, and then look what other points can we obtain. It is not hard to see that completion of the set of all images that one can get by applying $f$ and $g$ to $x_0$ is itself a compact metric space, and that $f,g$ form a $\lambda$-contractive map on this subspace of $X$ as well. Now, starting from $x_0$ we must include all the points described, and (provided we cannot get to the same point in different ways) we can actually biject all these points that are obtained using $f,g$ from $x_0$ with finite words over a two letter alphabet. Therefore, our construction will start from an underlying set $X$ of all finite words over $\{a,b\}$, with obvious functions $f,g$ each of which adds one of the characters to the beginning of the word given as input. Then, provided we have a metric on $X$, we will take its completion, and hope that the metric space that we get, along with $f,g$ satisfies all the properties of the Theorem~\ref{thm1}. This is where diameter spaces come into play. We will describe the properties of a collection of diametrisable sets $\mathcal{D}$ on $X$, that guarantee the completion of the induced pseudometric space, along with $f,g$, (being the concatenation functions described), to be a counterexample. Being a relatively long list, we refer the reader to the Corollary~\ref{sumCor} to get the idea of what conditions we impose on $\mathcal{D}$, which will in fact be a sequence of subsets of $X$, denoted by $S_0, S_1, S_2, \dots$.\\

\section{Inducing a counterexample from a diameter space}
\subsection{Diameter spaces and their connection with metric spaces}
In this subsection we show how one can obtain a pseudometric space from diameter spaces. The following proposition tells us how to induce pseudometric on the underlying set.
\begin{proposition} Let $(X, \mathcal{D}, \diam$) be a diameter space. Define a function $d:X^2 \rightarrow \mathbb{R}$ by $d(x, y) = \inf \diam(U)$ when $x$ and $y$ are distinct, where infimum is taken over all diametrisable set that contain points $x$ and $y$, and $d(x,y) = 0$ otherwise. Then $d$ is well-defined and $(X, d)$ is a pseudometric space. \end{proposition}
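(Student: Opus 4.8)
The plan is to verify the three defining properties of a pseudometric: non-negativity (with $d(x,x)=0$), symmetry, and the triangle inequality, after first checking that $d$ is well-defined, i.e.\ that the infimum is taken over a non-empty set. Well-definedness is immediate from axiom~(1) of a diametrisable space: given distinct $x,y$, there is some $U\in\mathcal{D}$ with $x,y\in U$, so the set of diameters over which we take the infimum is non-empty, and since $\diam$ takes values in $\mathbb{R}_{\geq0}$ the infimum is a well-defined non-negative real. Symmetry is trivial, since the condition ``$U$ contains both $x$ and $y$'' is symmetric in $x$ and $y$, and $d(x,x)=0=d(y,y)$ by definition. Non-negativity is clear from $\diam\geq0$.

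The only substantive point is the triangle inequality $d(x,z)\leq d(x,y)+d(y,z)$. I would split into cases according to which of $x,y,z$ coincide; the only nontrivial case is when all three are distinct (if two coincide the inequality reduces to $d(x,z)\le d(x,z)$ or to $0\le d(x,y)+d(y,x)$). Assuming $x,y,z$ distinct, fix $\varepsilon>0$ and choose $U\in\mathcal{D}$ with $x,y\in U$ and $\diam(U)\leq d(x,y)+\varepsilon$, and $V\in\mathcal{D}$ with $y,z\in V$ and $\diam(V)\leq d(y,z)+\varepsilon$. Since $y\in U\cap V$, the intersection is non-empty, so by axiom~(2) of a diametrisable space $U\cup V\in\mathcal{D}$, and by the triangle inequality for diameter spaces $\diam(U\cup V)\leq\diam(U)+\diam(V)$. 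Now $x,z\in U\cup V$, so $d(x,z)\leq\diam(U\cup V)\leq\diam(U)+\diam(V)\leq d(x,y)+d(y,z)+2\varepsilon$. Letting $\varepsilon\to0$ gives the claim.

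I do not expect any real obstacle here; the proposition is essentially a bookkeeping exercise, and the ``main'' step — the triangle inequality — is precisely where axiom~(2) (closure under union of overlapping sets) and the triangle inequality for the diameter function were designed to be used. The only mild care needed is the handling of the degenerate cases where some of the three points coincide, and the standard $\varepsilon$-argument to pass from near-optimal diametrisable sets to the infimum.
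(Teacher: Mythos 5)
Your proof is correct and follows essentially the same route as the paper: well-definedness from axiom (1), then the triangle inequality via choosing near-optimal sets $U,V$ through $y$, using axiom (2) and the diameter triangle inequality, and letting $\varepsilon\to0$ (the paper's $\varepsilon/2$ versus your $2\varepsilon$ is immaterial). No gaps.
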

\begin{proof} Firstly, suppose we are given two distinct points $x$ and $y$. Then $S$, the set of all values that diameter of a diametrisable set containing $x, y$ can take, is non-empty, and bounded from below, so $\inf S$ exists, and $d$ is well-defined.\\
To prove that $d$ is a pseudometric, we just need to show that the triangle inequality holds, since $d(x,x) = 0$ holds for all points $x$ and $d$ is symmetric by construction. Therefore let $x, y, z$ be three points. If any of these are equal, we are done. Otherwise, given $\epsilon > 0$ we can find sets $U, V \in \mathcal{D}$ such that $x, y \in U$, $y, z \in V$, $d(x, y) \leq \diam U \leq d(x, y) + \epsilon/2, d(y, z) \leq \diam V\leq d(y, z) + \epsilon / 2$. But $U$ and $V$ intersect hence their union forms a diametrisable set, and further $\diam( U \cup V) \leq \diam (U) + \diam (V) \leq d(x,y) + d(y, z) + \epsilon$, so $d(x, z) \leq d(x,y) + d(y, z) + \epsilon$. But this holds for any positive $\epsilon$ proving the triangle inequality, and therefore the proposition.\end{proof}
The pseudometric that we constructed from the diameter space will be referred to as the \emph{induced pseudometric} by $\diam$.\\
Remark that given any metric space $(X, d)$ we can construct a diameter space $(X, \mathcal{D}, \diam)$ by taking diametrisable sets to be the finite subsets of $X$, and the diameter function $\diam$ to have the usual meaning, that is for any $U \in \mathcal{D}$, we set $\diam(U)$ to be $\max (x, y)$ where maximum is taken over all pairs of points in $U$. Then, the metric $d$ coincides with the pseudometric induced by $\diam$. Thus in this way, we can consider every metric space to be a diameter space at the same time.\\
In addition we note that any finite measure space is a diameter space as well.
\subsection{Required properties of the diameter space}
In the previous subsection we saw how to obtain a pseudometric space from a diameter space. To construct a counterexample to the conjecture , we will use this procedure, but as the proposition only guaranties that we get a pseudometric space, we need to add additional properties of a diameter space to ensure that we reach our goal. First of all, we work with specific underlying set $X$ and the diameter function which are consistent with the nature (or more precisely the geometry) of the problem. As we are considering essentially compositions of functions applied to an element of the given metric space satisfying the assumptions of the conjecture, we will work with points of $X$ being finite words over the alphabet consisting of two letters $\Sigma = \{a,b\}$, including the empty word.  Before we proceed further, let us introduce some notation.\\

\textbf{Notation.} If $u, v$ are two words of $X$, we write $uv$ for the word obtained by writing first $u$ then $v$. Say that $u$ is a \emph{prefix} or an \emph{initial segment} of $v$, if there is another word $w$ with $v = uw$, and if this holds write $u \leq v$. The \emph{length} of a word $u$, denoted by $l(u)$ is the number of letters in $u$. Characters are considered to be words of length one simultaneously as being characters. When $S$ is a subset of $X$ we write $uS = \{us: s \in S \}$ and $Su = \{su:s\in S \}$, when $u \in X$. Given a positive integer $n$ and word $u$, write $u^n$ for $uu\dots u$, where $u$ appears $n$ times. The empty word is denoted by $\emptyset$. Finally, we allow infinite words occasionally (although these are not elements of $X$), and write $u^\infty$ to stand for the infinite word obtained by writing consecutive copies of $u$ infinitely many times.\\

We will take our functions $f, g:X\rightarrow X$ to be given by $f(u) = au$ and $g(u) = bu$ for all words $u \in X$. Then every word is actually the same as the corresponding composition of functions applied to the empty word. On the other hand, having in mind the contraction property of the family of the functions that we want to hold, we take $\mathcal{D}$ to be a sequence of subsets of $X$, namely $\mathcal{D} = \{S_0, S_1, S_2, \dots\}$ and set $\diam(S_k) = \lambda^k$ for some fixed $\lambda \in (0,1)$. Now, we just need to specify what needs to hold for $\mathcal{D}$ so that we get a counterexample.\\
Let us start with ensuring that $(X, \mathcal{D}, \diam)$ is a diameter space. We accomplish this by requiring the following property:\\

\textbf{A1.} For any nonnegative integers $i < j$ if $S_i \cap S_j \not= \emptyset$ then $S_j \subset S_i$ and $S_0 = X$.\\

Then we see that $(X, \mathcal{D})$ is a diametrisable space and that given any two diametrisable sets $S_i, S_j$ that intersect, we have $S_i \cup S_j = S_i$ or $S_i \cup S_j = S_j$ so the triangle inequality holds, thus this is indeed a diameter space.\\

Consider now $(X, d)$ where $d$ is the induced pseudometric by $\diam$. To make $d$ non-degenerate, we introduce another property:\\

\textbf{A2.} Each point belongs to only finitely many diametrisable sets.\\

Since $\diam$ is always positive, and the infimum defining $d(x,y)$ for $x\not=y$ is actually minimum taken over finitely many positive values, we get that $d$ is nondegenerate, and thus a metric.\footnote{It is easy to see that this is in fact an ultra metric.}\\

As far as the compactness is concerned, the fact that we can obtain a compact space from a totally bounded one by taking its completion is what motivates our following step. Hence, another condition is introduced:\\

\textbf{A3.} For any positive integer $N$, there are integers $i_1, i_2, \dots,i_n$ greater than $N$ such that $X \setminus (S_{i_1}\cup S_{i_2}\cup \dots\cup S_{i_n})$ is finite.\\

\begin{proposition} The completion $(\overline{X}, d)$ of the metric space induced by $(X, \mathcal{D}, \diam)$ is compact, provided $\mathcal{D}$ satisfies \textbf{A1}-\textbf{A3}.\end{proposition}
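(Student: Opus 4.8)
The plan is to show that $\overline{X}$ is totally bounded, since a complete metric space that is totally bounded is compact; completeness of $\overline{X}$ is automatic as it is a completion. Moreover, it suffices to prove that the original space $(X,d)$ is totally bounded, because a dense subset of a metric space is totally bounded if and only if the whole space is (an $\epsilon$-net for $X$ is a $2\epsilon$-net for $\overline{X}$, say). So the real content is: for every $\epsilon > 0$, the space $X$ of finite words over $\{a,b\}$ can be covered by finitely many sets of diameter at most $\epsilon$.

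First I would fix $\epsilon > 0$ and choose $N$ large enough that $\lambda^{N} < \epsilon$; this is possible since $\lambda \in (0,1)$. By property \textbf{A3}, there exist integers $i_1, \dots, i_n$, all greater than $N$, such that the set $F = X \setminus (S_{i_1} \cup \dots \cup S_{i_n})$ is finite. Now each $S_{i_j}$ has $\diam(S_{i_j}) = \lambda^{i_j} \le \lambda^{N} < \epsilon$ (here I use $i_j > N$ and $\lambda < 1$), and hence for any two points $x,y \in S_{i_j}$ we have $d(x,y) \le \diam(S_{i_j}) < \epsilon$ directly from the definition of the induced pseudometric, since $S_{i_j}$ is itself a diametrisable set containing $x$ and $y$. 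Thus each $S_{i_j}$ has $d$-diameter strictly less than $\epsilon$. For the finite leftover set $F = \{x_1, \dots, x_m\}$, I would simply cover it by the singletons $\{x_1\}, \dots, \{x_m\}$, each of which trivially has diameter $0 < \epsilon$. Altogether $X = S_{i_1} \cup \dots \cup S_{i_n} \cup \{x_1\} \cup \dots \cup \{x_m\}$ is a cover of $X$ by finitely many sets each of $d$-diameter less than $\epsilon$, so $X$ is totally bounded.

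Finally I would assemble the pieces: $(X,d)$ is totally bounded (by the above), hence its completion $(\overline{X}, d)$ is totally bounded, and being a completion it is complete; a complete totally bounded metric space is compact, which gives the claim. I do not anticipate a genuine obstacle here — the only point requiring a little care is the passage from total boundedness of the dense subset $X$ to that of $\overline{X}$, and the observation that the $\inf$ in the definition of $d$ is bounded above by $\diam$ of any single diametrisable set containing the two points, so that membership of $x,y$ in a common $S_{i_j}$ immediately controls $d(x,y)$. (Properties \textbf{A1} and \textbf{A2} are not needed for this proposition beyond what is implicit in $d$ being a genuine metric; only \textbf{A3} and the choice of $\diam$ do the work.)
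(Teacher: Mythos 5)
Your proof is correct and follows essentially the same route as the paper: reduce to total boundedness of $(X,d)$, apply \textbf{A3} with $N$ chosen so that $\lambda^N < \epsilon$ (the paper uses $\epsilon/2$ and balls around chosen points, you use the sets themselves as small-diameter pieces plus singletons for the finite remainder), and then invoke the standard fact that the completion of a totally bounded space is compact. The only difference is cosmetic, so there is nothing to add.
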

\begin{proof} By the comments from the above, we just need to show that $(X, d)$ is totally bounded. Let $\epsilon > 0$ be given, and choose $N$ for which $\lambda^N < \epsilon/2$ holds. By \textbf{A3}, there are $i_1, i_2, \dots, i_n > N$ for which the union of $S_{i_1}, S_{i_2}, \dots, S_{i_n}$ covers all but finitely many points, denoted by $y_1, y_2, \dots, y_m$. Then, each $S_{i_k}$ is contained in $B_{x_k}(\epsilon)$ for some $x_k \in S_{i_k}$ and $y_k \in B_{y_k}(\epsilon)$, so $X$ is covered by finitely many open balls of radius $\epsilon$, and so the metric space is totally-bounded.\end{proof}
Say that a Cauchy sequence $(x_n)_{n\geq 1}$ is proper if there is no $N$ with $x_N = x_{N+1} = \dots$. The three conditions described so far give us a nice characterisation of the proper Cauchy sequences in $(\overline{X}, d)$, whose elements lie in $X$.
\begin{proposition}\label{cauchychar}Suppose that $\mathcal{D}$ satisfies properties \textbf{A1}-\textbf{A3}. Then a sequence of points in $X$ is proper Cauchy with respect to the induced metric if and only if for any given positive integer $M$ there is $m > M$ such that $S_m$ contains all but finitely many points of the sequence. \end{proposition}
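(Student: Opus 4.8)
The plan is to prove the two directions separately, using the fact (from the construction of $d$) that for distinct $x,y\in X$, $d(x,y)=\min\{\lambda^k : x,y\in S_k\}$, an actual minimum over a finite set by \textbf{A2}, together with the nesting structure from \textbf{A1}.

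For the ``if'' direction, suppose that for every $M$ there is $m>M$ with $S_m$ containing all but finitely many terms of the sequence $(x_n)$. Given $\epsilon>0$, pick $M$ with $\lambda^M<\epsilon$, then choose such an $m>M$; since $S_m$ omits only finitely many $x_n$, there is $N$ so that $x_n\in S_m$ for all $n\ge N$, whence $d(x_n,x_{n'})\le\operatorname{diam}(S_m)=\lambda^m\le\lambda^M<\epsilon$ for all $n,n'\ge N$. So $(x_n)$ is Cauchy. For properness: if the sequence were eventually constant, say $x_n=x$ for all $n\ge N_0$, I would derive a contradiction — apply the hypothesis with $M$ larger than every index $k$ such that $x\in S_k$ (finitely many by \textbf{A2}); the resulting $S_m$ contains all but finitely many $x_n$, hence contains $x$, but $m>M$ exceeds all those indices, contradiction. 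Thus the sequence is proper Cauchy.

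For the ``only if'' direction, suppose $(x_n)$ is proper Cauchy. Fix a positive integer $M$ and set $\epsilon=\lambda^M$ (or slightly smaller). By the Cauchy property there is $N$ with $d(x_n,x_{n'})<\epsilon$ for all $n,n'\ge N$. Passing to the tail $(x_n)_{n\ge N}$: for each pair $n,n'\ge N$ with $x_n\ne x_{n'}$ there is an index $k(n,n')>M$ (forced by $\lambda^{k}<\lambda^M$) with $x_n,x_{n'}\in S_{k(n,n')}$. The key point is to assemble these pairwise containments into a single $S_m$. Here I would use \textbf{A1}: any two of the sets $S_{k(n,n')}$ share a common point (namely $x_n$ if they are indexed by pairs involving $n$), so by \textbf{A1} one is contained in the other, i.e. the family of these sets is totally ordered by inclusion. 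Since, by properness, the tail contains at least two distinct values, this family is nonempty; and by \textbf{A2} the indices appearing are bounded, so there is a \emph{largest} such set $S_m$ among them, with $m>M$. I then need that $S_m$ contains all but finitely many $x_n$: for any $n\ge N$ with $x_n$ differing from some fixed distinct value $x_{n_0}$ in the tail, $x_n\in S_{k(n,n_0)}\subseteq S_m$ by maximality; the only terms possibly outside $S_m$ are those among $x_1,\dots,x_{N-1}$ together with those equal to $x_{n_0}$ — but I can choose $n_0$ so that $x_{n_0}$ itself lies in $S_m$ (e.g. since $S_m=S_{k(n,n_0)}$ for some $n$, both $x_n$ and $x_{n_0}$ lie in it), so in fact all but finitely many $x_n$ lie in $S_m$, as required.

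The main obstacle is the bookkeeping in the ``only if'' direction: ensuring that the totally ordered family of sets $S_{k(n,n')}$ genuinely has a maximum covering a cofinite subset of the sequence, and handling the degenerate terms (those equal to one of finitely many repeated values, and the finitely many early terms) without circularity. The properties \textbf{A1} (nesting) and \textbf{A2} (local finiteness) are exactly what make this work, and properness is what guarantees the relevant family of sets is nonempty so that a genuine $S_m$ — rather than a trivial singleton-type bound — can be extracted.
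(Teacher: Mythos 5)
Your proof is correct and takes essentially the same route as the paper's: the ``if'' direction is identical, and in the ``only if'' direction the paper fixes one tail term and takes the set of \emph{smallest index} containing it together with another tail term, which by \textbf{A1} is precisely your maximal set (so the paper needs only well-ordering of the indices where you invoke \textbf{A2}), and then absorbs the whole tail into that set exactly as you do. The only small imprecision is your passing claim that the entire family of sets $S_{k(n,n')}$ is totally ordered by inclusion -- sets arising from disjoint pairs need not intersect -- but since your final argument only uses the pairs involving the fixed index $n_0$, all of which contain $x_{n_0}$, this does not affect correctness.
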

\begin{proof} \emph{Only if direction.} Let $(x_n)_{n\geq1}$ be a proper Cauchy sequence in $X$ and let a positive integer $M$ be given. Take a positive $\epsilon < \lambda ^ M$. Then, as the given sequence is Cauchy, we have $N$ such that $m,n > N$ implies $d(x_n, x_m) < \epsilon$.\\
Now fix any $m > N$ and let $I = \{i > M:\exists n > N, x_n, x_m \in S_i\}$. By the definition of $d$, we know that this set is nonempty, therefore has a minimal element $i_0$. If $n > N$ then $d(x_n, x_m) < \epsilon < \lambda^M$, so there is $j > M$ with $x_n, x_m$ both belonging to $S_j$. But $x_m \in S_{i_0}$ so $S_{i_0}, S_j$ intersect and by the choice of $i_0$ we have $x_n \in S_j \subset S_{i_0}$, so almost all points of the sequence are contained in $S_{i_0}$.\\
\emph{If direction.} Given $\epsilon > 0$ take $M$ such that $\lambda ^ M < \epsilon$. Then there is $m > M$ with $S_m$ containing almost all points in the sequence, and the distance between two points in $S_m$ is at most $\lambda ^ {M+1} < \epsilon$, so the sequence is Cauchy. If it was not a proper one, the point which is equal to almost all of its members would belong to infinitely many of the sets $S_i$ which is impossible.\end{proof}
\begin{proposition} Under the same assumptions as in the previous proposition, no proper Cauchy sequence in $(\overline{X}, d)$ converges to a point in $X$.\end{proposition}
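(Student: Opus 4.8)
The plan is to argue by contradiction. Suppose that $(x_n)_{n\geq 1}$ is a proper Cauchy sequence whose terms lie in $X$ and that it converges, in $\overline{X}$, to some point $x \in X$; I will derive a contradiction with properness. The crucial observation is that, since $x$ is an honest point of $X$, property \textbf{A2} forces $x$ to belong to only finitely many of the sets $S_i$. In particular the set $\{i : x \in S_i\}$ is finite and, by \textbf{A1}, non-empty (it contains $0$, as $S_0 = X$), so it has a maximum, which I will call $K$. Intuitively, this says there is a floor $\lambda^K$ below which no diametrisable set containing $x$ can shrink, and hence a floor below which the distance from $x$ to any other point of $X$ cannot drop.

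First I would make that last sentence precise. For any $y \in X$ with $y \neq x$ and any $S_j \in \mathcal{D}$ with $x, y \in S_j$, the index $j$ satisfies $j \leq K$ by the choice of $K$, so $\diam(S_j) = \lambda^j \geq \lambda^K$. Taking the infimum over all such $S_j$ (the collection over which the infimum in the definition of $d$ is taken) yields $d(x,y) \geq \lambda^K > 0$. Note that this uses only the uniform bound $\lambda^K$ and does not require the infimum to be attained. Next, since $x_n \to x$, there is $N$ such that $d(x_n, x) < \lambda^K$ whenever $n > N$. Combining this with the previous inequality forces $x_n = x$ for every $n > N$. But then $x_N' = x_{N'+1} = \dots$ for $N' = N+1$, contradicting the assumption that the sequence is proper.

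I do not expect a genuine obstacle here; the argument is short. The only point that needs care is the very first step — seeing that $K$ is well-defined — and this is exactly where \textbf{A2} is used: without the hypothesis that each point lies in only finitely many diametrisable sets, $x$ could belong to $S_i$ for arbitrarily large $i$, the distances $d(x_n,x)$ could legitimately tend to $0$, and the conclusion would fail. Everything else is routine bookkeeping with the definition of the induced metric.
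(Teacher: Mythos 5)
Your core idea is exactly the paper's: by \textbf{A2} the point $x \in X$ lies in only finitely many diametrisable sets, so there is a largest index $K$ with $x \in S_K$, every diametrisable set containing $x$ has diameter at least $\lambda^K$, and hence $x$ is isolated among points of $X$ (the paper phrases this as $B_x(\lambda^n) \cap X = \{x\}$ for large $n$). Your derivation of the contradiction from there is fine, and in fact slightly more direct than the paper's (which appeals to Proposition~\ref{cauchychar} rather than to eventual constancy).

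However, there is a gap: you prove a weaker statement than the one claimed. The proposition is about proper Cauchy sequences in $\overline{X}$, whereas your very first sentence restricts to sequences ``whose terms lie in $X$'', and your key step --- $d(x_n,x) < \lambda^K$ forces $x_n = x$ --- genuinely uses $x_n \in X$: your lower bound $d(x,y) \geq \lambda^K$ is established only for $y \in X \setminus \{x\}$, and a priori points of $\overline{X} \setminus X$ could lie arbitrarily close to $x$. The paper spends its first line on precisely this reduction: each term $x_n \in \overline{X} \setminus X$ is replaced by a point $y \in X$ with $d(x_n,y) < \tfrac{1}{2} d(x_n,x)$, producing a sequence in $X$ that is still proper Cauchy with the same limit. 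Alternatively, you could close the gap within your own argument by extending the bound to all of $\overline{X}$: if $y \in \overline{X} \setminus X$, take $z_m \in X$ with $z_m \to y$; eventually $z_m \neq x$ (else $y = x \in X$), so $d(x,y) \geq d(x,z_m) - d(z_m,y) \geq \lambda^K - d(z_m,y)$, and letting $m \to \infty$ gives $d(x,y) \geq \lambda^K$ for every $y \in \overline{X}\setminus\{x\}$, after which your conclusion goes through verbatim. Either patch is short, but as written the proposal does not cover the case of sequence terms outside $X$, which the statement requires.
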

\begin{proof} Suppose that a point $x \in X$ is a limit of a proper Cauchy sequence $(x_n)$ in $\overline{X}$. First of all, if $x_n \in \overline{X}\setminus X$, substitute $x_n$ by a point $y \in X$ such that $d(x_n, y)< \frac{1}{2}d(x_n,x)$. The newly obtained sequence now lies in $X$, and is still proper Cauchy, with unchanged limit. As $x \in X$ belongs to $S_i$ for only finitely many $i$, we have $B_{x}(\lambda^n) \cap X = \{x\}$, for sufficiently large $n$. This is however a contradiction, due to the Proposition~\ref{cauchychar}.\end{proof}
\begin{lemma} Let $(\overline{X}, d)$ be the metric space considered so far. Suppose $F$ is a function on $X$ that preserves Cauchy sequences, that is given Cauchy sequence $(x_n)_{n\geq1}$, the sequence of images $(F(x_n))_{n\geq1}$ is Cauchy as well. Then, extension of $F$ to the completion of the space given by $F(x) = \lim_{n\rightarrow \infty} F(x_n)$, where $(x_n)$ is any Cauchy sequence in $X$ tending to $x$ in $\overline{X} \setminus X$, is continuous.\end{lemma}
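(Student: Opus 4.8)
My plan is to argue in three steps: verify that the extension is well defined, isolate the single technical claim that does the real work, and then derive continuity on all of $\overline{X}$ from it. Throughout, write $\overline{F}$ for the extension, so that $\overline{F}|_X = F$ and $\overline{F}(x) = \lim_n F(x_n)$ for $x \in \overline{X}\setminus X$ and any Cauchy sequence $(x_n)$ in $X$ with $x_n \to x$. This limit exists because $(F(x_n))$ is Cauchy by hypothesis and $\overline{X}$ is complete, and it is independent of the chosen sequence: given two Cauchy sequences $(x_n), (y_n)$ in $X$ both converging to $x$, their interleaving $x_1, y_1, x_2, y_2, \dots$ converges to $x$, hence is Cauchy, hence has a Cauchy (thus convergent) $F$-image, and $(F(x_n))$, $(F(y_n))$ are subsequences of it with a common limit. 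So $\overline{F}$ is well defined.

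The claim I would isolate is: whenever $(w_k)$ is a Cauchy sequence in $X$ with $w_k \to x$ in $\overline{X}$, then $F(w_k) \to \overline{F}(x)$. For $x \in \overline{X}\setminus X$ this is exactly the definition just validated. For $x \in X$, interleave $(w_k)$ with the constant sequence equal to $x$: the result converges to $x$, hence is Cauchy, so its $F$-image is Cauchy, and since that image contains the constant subsequence $F(x)$ it converges to $F(x) = \overline{F}(x)$; in particular $F(w_k) \to \overline{F}(x)$. (Taking all $w_k$ in $X$, the same remark shows that $F$ itself is continuous on $X$.)

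For continuity of $\overline{F}$ on $\overline{X}$ it suffices, $\overline{X}$ being metric, to check sequential continuity. Fix $x \in \overline{X}$ and $z_k \to x$ with $z_k \in \overline{X}$. For each $k$ pick a Cauchy sequence in $X$ converging to $z_k$ (the constant sequence if $z_k \in X$); going far enough along it, the definition of $\overline{F}$ lets us choose $w_k \in X$ with $d(z_k, w_k) < 1/k$ and $d(F(w_k), \overline{F}(z_k)) < 1/k$. Then $d(w_k, x) \leq d(w_k, z_k) + d(z_k, x) \to 0$, so $w_k \to x$, hence $(w_k)$ is Cauchy in $X$, and the claim gives $F(w_k) \to \overline{F}(x)$. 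Therefore $d(\overline{F}(z_k), \overline{F}(x)) \leq d(\overline{F}(z_k), F(w_k)) + d(F(w_k), \overline{F}(x)) < 1/k + d(F(w_k), \overline{F}(x)) \to 0$, i.e. $\overline{F}(z_k) \to \overline{F}(x)$.

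The step I expect to need the most care is the last one, precisely because the naive approach is circular: one wants to replace $z_k$ by a nearby point of $X$ and then control the images, but bounding $d(F(\text{that point}), \overline{F}(z_k))$ looks as though it needs the continuity of $\overline{F}$ that we are trying to prove. The resolution is that only the defining property of $\overline{F}$ along $X$-valued Cauchy sequences is needed here, and that is available unconditionally; the rest is the routine interleaving bookkeeping familiar from Cauchy-continuous maps, with nothing specific to the space $\overline{X}$ constructed here.
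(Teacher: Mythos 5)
Your proof is correct and follows essentially the same route as the paper: well-definedness by merging two sequences converging to the same point, and continuity by replacing each term $z_k$ by a nearby point $w_k\in X$ with control on both $d(z_k,w_k)$ and $d(\overline{F}(z_k),F(w_k))$, then applying the defining property along the resulting $X$-valued Cauchy sequence. The only divergence is the case of a limit $x\in X$: the paper disposes of it by noting that in this particular space a sequence converging to a point of $X$ is eventually constant (points of $X$ are isolated in $\overline{X}$), whereas your interleaving with the constant sequence proves $F(w_k)\to F(x)$ without that fact, so your argument is marginally more general --- it works for any metric space, its completion, and any Cauchy-continuous $F$ --- at no extra cost.
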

\begin{proof}Firstly, we should prove that such extension of $F$ is well-defined. Take arbitrary $x \in \overline{X} \setminus X$, and thus since $\overline{X}$ is completion of $x$, there must be Cauchy sequence in $X$ whose limit is $x$. But image of this sequence under $F$ is Cauchy as well, so it has limit in $\overline{X}$, so we just need to show its uniqueness. Therefore, suppose that $(x_n)_{n\geq1}, (y_n)_{n\geq1}$ are two sequences in $X$ tending to $x$. Merging these two sequence into $(t_n)_{n\geq1}$, where $t_{2n-1} = x_n, t_{2n} = y_n, n \geq 1$, implies that $(t_n)_{n\geq1}$ is Cauchy, hence $(F(t_n))_{n\geq1}$ is also Cauchy, so $(F(x_n))_{n\geq1}$ and $(F(y_n))_{n\geq1}$ have the same limit, as required.\\
Secondly, we should prove that $F$ is continuous in $\overline{X}$. Let $(x_n)_{n\geq1}$ be sequence tending to some $x \in \overline{X}$. If $x \in X$, then sequence is eventually constant and equal to $x$, hence $(F(x_n))_{n\geq1}$ trivially tends to $F(x)$. Otherwise, $x \not \in X$, so consider new sequence $(t_n)_{n\geq1}$ given as follows. If $x_n \in X$, set $t_n = x_n$, and if this does not hold, there is Cauchy sequence $(y_m)_{m\geq1}$ in $X$ whose limit is $x_n$. By assumption, $(F(y_m))_{m\geq1}$ is Cauchy in $X$, and as we have shown previously, it tends to $F(x_n)$. Hence, for sufficiently large $m$, we have that $d(x_n, y_m), d(F(x_n), F(y_m)) < 1/n$, so set $t_n = y_m$. Thus, as for all $n$ we have $d(x_n, t_n) < 1/n$, we have that $(t_n)_{n\geq1}$ is Cauchy in $X$ and tends to $x$, so its image under $F$ is Cauchy sequence with limit $F(x)$, but $d(F(x_n), F(t_n)) < 1/n$ holds for all $n$, thus $\lim_{n\rightarrow \infty} F(x_n) = F(x)$, as required, implying continuity of $F$.\end{proof}
This lemma suggests the fourth property of the diametrisable sets:\\

\textbf{A4.} If $i_1 < i_2 < \dots$ are indices such that $S_{i_1} \supset S_{i_2} \supset \dots$ then, given any $N$, we can find $n_a, n_b > N$ for which $S_{n_a}$ contains all but finitely many elements of $aS_{i_m}$ for some $m$, and $S_{n_b}$ contains all but finitely many elements of $bS_{i_p}$ for some $p$.\\

\begin{proposition} If $\mathcal{D}$ satisfies \textbf{A1}-\textbf{A4}, then $f, g:\overline{X} \rightarrow \overline{X}$, defined before and then extended as in the previous lemma are continuous with respect to the induced metric.\end{proposition}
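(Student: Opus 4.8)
The plan is to reduce the whole statement to the lemma that immediately precedes it. The functions $f, g$ on $X$ are defined by $f(u) = au$, $g(u) = bu$, and their extensions to $\overline{X}$ are built in exactly the way described in that lemma. So it suffices to verify the hypothesis of the lemma, namely that $f$ and $g$ \emph{preserve Cauchy sequences of points of $X$}: once this is known, continuity of the extensions on all of $\overline{X}$ is handed to us by the lemma.

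So I would take a Cauchy sequence $(x_n)_{n \geq 1}$ in $X$ and show that $(ax_n)_{n\geq1}$ and $(bx_n)_{n\geq1}$ are Cauchy. If $(x_n)$ is eventually constant there is nothing to prove, so assume it is proper. The first substantive step is to extract from $(x_n)$ a descending tower of diametrisable sets. By the only-if direction of Proposition~\ref{cauchychar}, for every $M$ there is an index $m > M$ with $S_m$ containing all but finitely many terms of $(x_n)$. Applying this repeatedly produces indices $m_1 < m_2 < \cdots$ such that each $S_{m_j}$ contains all but finitely many $x_n$. Since the sequence has infinitely many terms, any two of the sets $S_{m_j}$ share at least one point $x_n$, so property \textbf{A1} upgrades these co-finite containments into genuine nesting $S_{m_1} \supseteq S_{m_2} \supseteq \cdots$ — a tower of exactly the shape that \textbf{A4} refers to.

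Now fix an arbitrary $N$. Property \textbf{A4}, applied to this tower, yields indices $n_a, n_b > N$ and some $j, k$ such that $S_{n_a}$ contains all but finitely many elements of $aS_{m_j}$ and $S_{n_b}$ contains all but finitely many elements of $bS_{m_k}$. Because $S_{m_j}$ omits only finitely many of the $x_n$, the set $aS_{m_j}$ omits only finitely many of the $ax_n$, hence $S_{n_a}$ contains all but finitely many $ax_n$; symmetrically $S_{n_b}$ contains all but finitely many $bx_n$. Since $N$ was arbitrary, the if direction of Proposition~\ref{cauchychar} — applied to the $X$-valued sequences $(ax_n)$ and $(bx_n)$ — shows that both are Cauchy. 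Thus $f$ and $g$ preserve Cauchy sequences on $X$, and the lemma gives continuity of the extensions.

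The one step I expect to require genuine care is the extraction of the descending tower: the statement of Proposition~\ref{cauchychar} only gives sets containing ``all but finitely many'' terms, and it is essential to notice that two such sets must genuinely intersect (since the sequence has infinitely many terms) before \textbf{A1} can be invoked to turn ``co-finite'' containment into honest set inclusion; without honest inclusion, \textbf{A4} would not apply to the tower. Everything else — stability of ``all but finitely many'' under the maps $u \mapsto au$ and $u \mapsto bu$, and the final invocation of Proposition~\ref{cauchychar} — is routine bookkeeping.
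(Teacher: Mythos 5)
Your proposal is correct and follows essentially the same route as the paper's own proof: reduce via the extension lemma to showing that $f$ and $g$ preserve Cauchy sequences, use the only-if direction of Proposition~\ref{cauchychar} together with \textbf{A1} to extract a nested tower $S_{m_1} \supset S_{m_2} \supset \dots$ capturing cofinitely many terms, and then apply \textbf{A4} to that tower. The only cosmetic differences are that you treat $f$ and $g$ simultaneously and close by citing the if direction of Proposition~\ref{cauchychar} instead of repeating the $\epsilon$-estimate directly, which changes nothing of substance.
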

\begin{proof} We will show the claim for $f$, proof for $g$ follows the same lines. We only need to show that $f$ preserves Cauchy sequences in $X$, in fact it is sufficient to prove that if $(x_n)_{n\geq1}$ is proper Cauchy, then $(f(x_n))_{n\geq1}$ is Cauchy. Thus, suppose we are given a proper Cauchy sequence $(x_n)_{n\geq1}$ in $X$, so there indices $i_1 > 1, i_2 > 2, \dots$ (without loss of generality $i_1 < i_2 < \dots$) such that $S_{i_k}$ covers all but finitely many elements of the sequence for every $k$. Due to intersections and \textbf{A1} we have $S_{i_1} \supset S_{i_2} \supset \dots$. Let $\epsilon > 0$ be given and take $N$ which satisfies $\lambda^N < \epsilon$. Further, by \textbf{A4} there is $n > N$ and some $m$ for which $S_n$ contains all but finitely many elements of $aS_{i_m}$. Exploiting the fact that almost all elements of $(f(x_n))_{n\geq1}$ are contained in $aS_{i_m}$ yields that there is $M$ such that $k,l > M$ implies $f(x_k), f(x_l) \in S_n$ hence $d(x_k, x_l) \leq \lambda^N < \epsilon$, as required.\end{proof}
To make $\{f,g\}$ a contractive family, we define the property:\\

\textbf{A5.} For any $i \in \{0, 1, 2, \dots\}$, there is $j > i$ such that $c S_i \subset S_j$ for some character $c\in\{a,b\}$.\\

\begin{proposition} If $\mathcal{D}$ satisfies \textbf{A1}-\textbf{A5}, then $f,g$ form a $\lambda$-contractive family in $(\overline{X},d)$.\end{proposition}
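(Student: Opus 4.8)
The plan is to verify the contractive condition directly from the definition, using property \textbf{A5} to locate, for any pair of points $x,y\in\overline{X}$, a diametrisable set of small diameter containing one of $f(x),f(y)$ — wait, that is not quite the right framing. Rather, I want to show: for every $x,y\in\overline{X}$ there is a character $c\in\{a,b\}$ such that $d(cx,cy)\le\lambda\,d(x,y)$, where $ax$ means $f(x)$ and $bx$ means $g(x)$. First I would reduce to the case $x,y\in X$ with $x\ne y$: both $f$ and $g$ are continuous on $\overline{X}$ by the previous proposition, and $X$ is dense, so the inequality $d(f(x),f(y))\le\lambda d(x,y)$ (resp. for $g$) extends from $X$ to $\overline{X}$ by taking limits; one has to be slightly careful that the \emph{choice} of $c$ may vary along a sequence, but since there are only two characters, some character works for infinitely many terms of an approximating pair of sequences, and continuity then gives that character for the limit pair. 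So it suffices to prove the claim for distinct $x,y\in X$.

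Now fix distinct $x,y\in X$. By property \textbf{A2} the infimum defining $d(x,y)$ is attained: there is an index $i$ with $x,y\in S_i$ and $\diam(S_i)=\lambda^i=d(x,y)$. By \textbf{A5} there is $j>i$ and a character $c\in\{a,b\}$ with $cS_i\subset S_j$. Then $cx,cy\in cS_i\subset S_j$, so $d(cx,cy)\le\diam(S_j)=\lambda^j\le\lambda^{i+1}=\lambda\cdot\lambda^i=\lambda\,d(x,y)$. This is exactly the required inequality for that character $c$, i.e.\ $d(f(x),f(y))\le\lambda d(x,y)$ if $c=a$ and $d(g(x),g(y))\le\lambda d(x,y)$ if $c=b$. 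Hence $\{f,g\}$ is a $\lambda$-contractive family on $X$, and by the density/continuity argument of the previous paragraph it is a $\lambda$-contractive family on $(\overline{X},d)$.

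The only genuinely delicate point is the passage from $X$ to $\overline{X}$: one must check that the pointwise statement "for all $x,y$ there exists $i$ with $d(f_i(x),f_i(y))\le\lambda d(x,y)$" is preserved under the limit. I would handle it by a pigeonhole: given $x,y\in\overline{X}$ choose sequences $x_n\to x$, $y_n\to y$ in $X$; for each $n$ pick $c_n\in\{a,b\}$ witnessing the inequality for $(x_n,y_n)$; pass to a subsequence on which $c_n$ is constantly equal to some $c$; then apply continuity of the corresponding map and of $d$ to both sides. Everything else is a one-line chase through \textbf{A2} and \textbf{A5}, so I expect no real obstacle beyond writing this limiting argument cleanly.
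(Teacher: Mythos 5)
Your proposal is correct and follows essentially the same route as the paper: for distinct $x,y\in X$ you take the diametrisable set of largest index (equivalently, smallest diameter) containing both, apply \textbf{A5} to get a character $c$ with $cS_i\subset S_j$, $j>i$, and conclude $d(cx,cy)\le\lambda^j\le\lambda\,d(x,y)$; the extension to $\overline{X}$ via approximating sequences, pigeonholing on the character, and continuity of $f,g$ is exactly the paper's argument for the general case.
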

\begin{proof} Let us consider first $x, y \in X$. Taking largest possible $n$ for which $x, y \in S_n$, we have $m > n$ by \textbf{A5} such that without loss of generality $aS_n \subset S_m$, thus $d(f(x), f(y)) = d(ax, ay) \leq \lambda^m \leq \lambda \lambda^n = \lambda d(x,y)$, as wanted. (If we had character $b$ instead, we would get contraction when applying $g$.)\\
In general case, $x, y \in \overline{X}$, we can find sequences in $X$, $(x_n)_{n\geq 1}$ tending to $x$, and $(y_n)_{n\geq1}$ tending to $y$ (if one of these is already in $X$, then take trivial sequence). Then, by previous case, one of $f,g$ contracts infinitely many pairs $(x_n, y_n)$, $f$ say. Let indices of those pairs be $i_1 < i_2 < \dots$. Then, $\lambda d(x, y) = \lambda \lim_{n\rightarrow \infty} d(x_{i_n}, y_{i_n}) \geq \lim_{n\rightarrow \infty} d(f(x_{i_n}), f(y_{i_n})) = d(f(x),f(y))$, since $f$ is continuous, as required. \end{proof}
\begin{proposition} Suppose that a function $F$, which is a word in $f,g$, has a fixed point. Let $w$ be the nonempty word which corresponds to $F$, i.e. $F(x) = wx$ for all $x$. Then there are infinitely many $i$ such that there is $u_i$ in $S_i$ with $wu_i$ also being a member of $S_i$.\end{proposition}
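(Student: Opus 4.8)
The plan is to work in the completion $\overline{X}$ and use the fixed point together with a proper Cauchy sequence approximating it, pushing the word $w$ through that sequence to land inside diametrisable sets $S_i$ with arbitrarily large index. First I would let $p \in \overline{X}$ be the fixed point of $F$, so $wp = p$. If $p \in X$, I would exhibit $u_i = p$ directly: by \textbf{A5}, applied repeatedly, starting from the smallest $n_0$ with $p \in S_{n_0}$ and the word $w$, one builds a strictly increasing chain of indices with $wp = p \in S_{n_k}$ for all $k$, so taking $u_i = p$ for these indices gives infinitely many as required. (Here one uses that $F$ is the composition corresponding to $w$, so $F(p) = wp$, and one reads off the characters of $w$ one at a time, invoking \textbf{A5} for each.) Actually one has to be slightly careful: \textbf{A5} only guarantees that \emph{one} of the two characters sends $S_i$ into some later $S_j$, not the specific character we want; so instead I would argue that $p$ and $wp = p$ are the same point, hence lie in $S_i$ for the finitely many $i$ containing $p$, and it is this finiteness that forces us into the genuinely interesting case $p \notin X$.

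So the heart of the argument is the case $p \in \overline{X} \setminus X$. Here I would take a proper Cauchy sequence $(x_n)$ in $X$ with $x_n \to p$; by Proposition~\ref{cauchychar} there are indices $i_1 < i_2 < \dots$, as large as we like, such that $S_{i_k}$ contains all but finitely many $x_n$, and by \textbf{A1} we may assume $S_{i_1} \supset S_{i_2} \supset \dots$. Now $F(x_n) = wx_n \to F(p) = p$ by continuity of $f,g$ (hence of $F$). So both sequences $(x_n)$ and $(wx_n)$ converge to $p$, and the merged sequence $t_{2n-1} = x_n$, $t_{2n} = wx_n$ is again proper Cauchy with limit $p$. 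Applying Proposition~\ref{cauchychar} to this merged sequence: for every $M$ there is $m > M$ with $S_m$ containing all but finitely many $t_n$, hence all but finitely many $x_n$ \emph{and} all but finitely many $wx_n$. Picking any $n$ large enough that $x_n \in S_m$ and $wx_n \in S_m$ (such $n$ exists since only finitely many terms of each type are excluded), we set $u_i = x_n$ with $i = m$, and since $M$ was arbitrary this produces infinitely many indices $i$ with $u_i, wu_i \in S_i$.

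The main obstacle, and the step deserving the most care, is the verification that the merged sequence is genuinely proper Cauchy and that Proposition~\ref{cauchychar} is being applied legitimately — in particular that we are not secretly in the situation where $w x_n = x_n$ eventually (which cannot happen for a nonempty word $w$, since that would force $x_n$ to have a prefix equal to itself, impossible for finite words) and that properness is preserved (if the merged sequence were eventually constant, its limit would be a point of $X$ lying in infinitely many $S_i$, contradicting \textbf{A2}, just as in the earlier propositions). One must also note that the merged sequence is Cauchy: both $(x_n)$ and $(wx_n)$ converge to the same limit $p$ in $\overline{X}$, so interleaving them yields a convergent, hence Cauchy, sequence. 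Once these points are in place the conclusion is immediate from Proposition~\ref{cauchychar} applied to the merged sequence, with the "all but finitely many" clause giving us the common index $i$ and a witness $u_i$ from the original sequence.
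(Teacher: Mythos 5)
Your proposal is correct and takes essentially the same route as the paper: rule out a fixed point in $X$ (note the clean reason is simply that $wp$ is strictly longer than $p$ for nonempty $w$, the length observation you make later --- not the ``finiteness forces us'' remark), take a sequence in $X$ converging to the fixed point, merge it with its image under concatenation by $w$, and apply Proposition~\ref{cauchychar} to the merged proper Cauchy sequence to extract arbitrarily large indices $i$ with a common witness $u_i, wu_i \in S_i$. This merging argument is exactly the paper's proof.
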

\begin{proof} Suppose $F$ fixes $u$. Then $u \not \in X$, so take $(x_n)_{n\geq1}$ in $X$ converging to $u$. Hence $(wx_n)_{n\geq1}$ converges to $u = F(u)$ as well, so merging these two sequences together, we get a Cauchy sequence, and the result follows from the Proposition~\ref{cauchychar}.\end{proof}
Therefore, let us introduce the final property of the diametrisable sets that we need to hold. The following corollary does this and sums up our work so far.
\begin{corollary}\label{sumCor}Let $\lambda \in (0,1)$. Consider a collection $\mathcal D$ of diametrisable subsets of $X$ that obeys:\begin{description}
\item[\textbf{A1}] The set $S_0$ is the whole of $X$ and given any nonnegative integers $i < j$, $S_i \cap S_j \not= \emptyset$ implies $S_j \subset S_i$.
\item[\textbf{A2}] Any point in $X$ belongs to only finitely many diametrisable sets.
\item[\textbf{A3}] For any positive integer $N$, there are integers $i_1, i_2, \dots,i_n$ greater than $N$ such that $X \setminus (S_{i_1}\cup S_{i_2}\cup \dots\cup S_{i_n})$ is finite.
\item[\textbf{A4}] If $i_1 < i_2 < \dots$ are indices such that $S_{i_1} \supset S_{i_2} \supset \dots$ then, given any $N$, we can find $n_a, n_b > N$ for which $S_{n_a}$ contains all but finitely many elements of $aS_{i_m}$ for some $m$, and $S_{n_b}$ contains all but finitely many elements of $bS_{i_p}$ for some $p$.
\item[\textbf{A5}] For any $i \in \{0, 1, 2, \dots\}$, there is $j > i$ such that $c S_i \subset S_j$ for some character $c\in\Sigma$.
\item[\textbf{A6}] Given a nonempty word $w$ in $X$ there are only finitely many diametrisable sets $S_i$ with $u_i \in S_i$ for which $wu_i \in S_i$.
\end{description}
Then, with constructions described above, $(\overline{X}, d)$ is a compact metric space with continuous functions $f, g: \overline{X} \rightarrow \overline{X}$ that form a $\lambda$-contractive family, but no word in $f,g$ has a fixed point.
\end{corollary}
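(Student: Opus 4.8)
The plan is simply to assemble the facts established in the preceding propositions: essentially all the real work is already done, and the corollary is the bookkeeping step recording which of \textbf{A1}--\textbf{A6} is used where. First, \textbf{A1} guarantees (as observed right after it was stated) that $(X,\mathcal{D})$ is a diametrisable space and that the triangle inequality for $\diam$ holds, so $(X,\mathcal{D},\diam)$ is a diameter space and the first proposition of Section~3 produces the induced pseudometric $d$. Then \textbf{A2}, together with the fact that $\diam$ takes only the positive values $\lambda^k$ and that for $x\neq y$ the infimum defining $d(x,y)$ is a minimum over a nonempty \emph{finite} set, upgrades $d$ to a genuine metric on $X$. Passing to the completion, the compactness proposition (which uses exactly \textbf{A1}--\textbf{A3}) shows $(\overline{X},d)$ is compact.

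Next I would invoke the continuity proposition: under \textbf{A1}--\textbf{A4} the concatenation maps $f(u)=au$, $g(u)=bu$ preserve Cauchy sequences in $X$, hence, by the extension lemma, extend to continuous maps $f,g:\overline{X}\to\overline{X}$. The contractive-family proposition, which additionally uses \textbf{A5}, then shows that for every $x,y\in\overline{X}$ at least one of $f,g$ contracts the pair $(x,y)$ by the factor $\lambda$, so $\{f,g\}$ is a $\lambda$-contractive family on the compact space $\overline{X}$.

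It remains to show that no word in $f,g$ has a fixed point, and this is where \textbf{A6} enters. Suppose for contradiction that some word $F$ has a fixed point; let $w$ be the associated nonempty word, so $F(x)=wx$. Since $w$ is nonempty, $wu\neq u$ for every $u\in X$ (their lengths differ), so the fixed point lies in $\overline{X}\setminus X$, which is precisely what the fixed-point proposition needs; it then yields infinitely many indices $i$ admitting $u_i\in S_i$ with $wu_i\in S_i$, directly contradicting \textbf{A6}. This completes the proof.

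I do not expect a genuine obstacle, since every ingredient has already been proved; the only points requiring a line of care are that the hypothesised fixed point really lies outside $X$ (so that a converging Cauchy sequence in $X$ exists and the fixed-point proposition applies) and that the word $w$ is nonempty, both of which are immediate from the length comparison $l(wu)=l(w)+l(u)>l(u)$.
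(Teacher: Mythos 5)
Your proposal is correct and follows essentially the same route as the paper: the corollary is exactly the assembly of the preceding propositions (diameter space and pseudometric from \textbf{A1}, metric from \textbf{A2}, compactness from \textbf{A1}--\textbf{A3}, continuous extensions from \textbf{A1}--\textbf{A4}, the $\lambda$-contractive property from \textbf{A5}, and the contradiction between the fixed-point proposition and \textbf{A6}). Your added remarks that $w$ is nonempty and that any fixed point must lie in $\overline{X}\setminus X$ by the length comparison are precisely the observations the paper relies on in its fixed-point proposition.
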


\section{Choosing the diametrisable sets}
Our main task now is to find a collection of diametrisable sets $\mathcal{D}$ which has the properties \textbf{A1}-\textbf{A6}. The most natural subsets of $X$ to be examined are $W_w = wX$, where $w$ is any word in $X$. Immediately, we observe that these, ordered by length of $w$, then by alphabetical order (so that $S_0 = W_\emptyset, S_1 = W_a, \dots$) satisfy all the properties except for \textbf{A6}. Hence, we will use these as the pillar of our construction, however, to make \textbf{A6} hold, we need to modify these slightly. The issue with sets described is that given any nonempty word $w \in X$, we allow $W_{w}$ to contain all the initial segments of $w^\infty$. With this in mind, we say that a nonempty word $w$ in $X$ is forbidden if there is another finite word $u$ such that $w$ is a prefix of $u^\infty$ and $l(w) > l(u)^2$. Otherwise, say that $w$ is available. For example, $\emptyset, a, abab$ are available, while $ababa$ is forbidden.
\begin{proposition}\label{addavailProp}Given a word $w \in X$ either $aw$ or $bw$ is available. \end{proposition}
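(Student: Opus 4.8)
The plan is to argue by contradiction: assume that \emph{both} $aw$ and $bw$ are forbidden and deduce that $w$ has a position whose letter is simultaneously $a$ and $b$. Write $n = l(w)$. Unpacking the definition of ``forbidden'', there are (necessarily nonempty) finite words $u_1, u_2$ with $aw$ a prefix of $u_1^\infty$ and $bw$ a prefix of $u_2^\infty$, and with $n+1 = l(aw) > l(u_1)^2$ and $n+1 = l(bw) > l(u_2)^2$. Set $p = l(u_1)$ and $q = l(u_2)$, so $n \geq p^2$ and $n \geq q^2$ (the inequalities being strict on $n+1$, hence non-strict on $n$ for integers); in particular $p \leq p^2 \leq n$ and $q \leq q^2 \leq n$. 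The only property of $u_1,u_2$ I will use is the standard one: a prefix $v$ of $u^\infty$ has period $l(u)$, i.e. $v_j = v_{j+l(u)}$ whenever $1 \le j$ and $j + l(u) \le l(v)$. Thus $aw$ has period $p$ and $bw$ has period $q$, and since $w$ occupies positions $2, \dots, n+1$ of each of these words, $w$ itself has period $p$ and period $q$ (these statements being non-vacuous thanks to $p, q \le n$).

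Next I extract a concrete clash of letters. Comparing the first and $(p+1)$-st letters of $aw$ (legitimate since $p \le n$) gives $a = (aw)_1 = (aw)_{p+1} = w_p$, and likewise $b = (bw)_1 = (bw)_{q+1} = w_q$. Hence $w_p \ne w_q$, so $p \ne q$; assume without loss of generality $p < q$ (the case $q < p$ is symmetric), and note $q \ge 2$ since $p \ge 1$.

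Now I invoke the theorem of Fine and Wilf: a word of length at least $r + s - \gcd(r,s)$ that has periods $r$ and $s$ also has period $\gcd(r,s)$. Since $p < q$ and $q \ge 2$ we get $p + q - \gcd(p,q) \le p + q - 1 \le 2q - 2 < q^2 \le n$, so Fine--Wilf applies to $w$ with $r = p$, $s = q$, and $w$ has period $d := \gcd(p,q)$. Because $d \mid p$, iterating the relation $w_j = w_{j-d}$ starting from $j = p$ — each intermediate index being a multiple of $d$ lying in $\{d, d+1, \dots, n\}$, hence in range — yields $w_p = w_d$; the same argument with $q$ in place of $p$ gives $w_q = w_d$. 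Therefore $a = w_p = w_d = w_q = b$, a contradiction. Hence at least one of $aw$, $bw$ is available.

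The one substantive ingredient is the Fine--Wilf theorem together with the quadratic lower bound $l(w) > l(u_i)^2$ coming from the definition of ``forbidden''; everything else is index bookkeeping. I expect the only mildly delicate point to be checking that all the periodicity relations, and the chaining $w_p = w_d = w_q$, involve indices that stay within $\{1,\dots,n\}$ — which is precisely where the generous (super-linear) threshold in the definition is comfortably more than enough, so no case analysis on small or degenerate $w$ is needed. (For instance, if $w = \emptyset$ then $n + 1 = 1 > p^2$ is already impossible, so the assumed configuration never occurs; one could, if desired, replace the appeal to Fine--Wilf by the elementary observation that the graph on $\{1,\dots,n\}$ with edges $\{i, i+p\}$ and $\{i,i+q\}$ is connected once $n \ge p + q - \gcd(p,q)$.)
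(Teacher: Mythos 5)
Your argument is correct, but it proves the proposition by a different route than the paper. You reduce everything to double periodicity of $w$ itself (periods $p=l(u_1)$ and $q=l(u_2)$, with the letters $w_p=a$ and $w_q=b$ extracted from the first letters of $aw$ and $bw$) and then invoke the Fine--Wilf periodicity lemma, which only needs $l(w)\geq p+q-\gcd(p,q)$ --- far less than the quadratic threshold in the definition of ``forbidden''. The paper instead stays self-contained: it cyclically shifts the period words $w_1,w_2$ of $aw,bw$ to words $v_1,v_2$ ending in $a$ and $b$ respectively, so that $w$ is a prefix of both $v_1^\infty$ and $v_2^\infty$, and then uses the quadratic bound in the crude form $l(w)\geq\max(l(v_1)^2,l(v_2)^2)\geq l(v_1)l(v_2)$ to compare $v_1^{l(v_2)}$ and $v_2^{l(v_1)}$ as equal-length prefixes of $w$; equality of their last characters gives the clash $a=b$. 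So your proof buys a quantitatively sharper and more standard argument at the cost of citing an external theorem, while the paper's buys self-containedness at the cost of using the full strength of the $l(u)^2$ threshold; your index bookkeeping (ranges for the periodicity relations and the chain $w_p=w_d=w_q$ through multiples of $d=\gcd(p,q)$) all checks out, as does the disposal of $w=\emptyset$.

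One small caveat: your closing parenthetical, asserting that the graph on $\{1,\dots,n\}$ with edges $\{i,i+p\}$ and $\{i,i+q\}$ is connected once $n\geq p+q-\gcd(p,q)$, is not correct as stated when $\gcd(p,q)>1$ (for $p=2$, $q=4$ the odd and even positions are separate components); the right statement is that the components are exactly the residue classes modulo $\gcd(p,q)$, which is what a from-scratch proof of Fine--Wilf, or of your chain $w_p=w_d=w_q$, would use. Since this remark is only offered as an optional replacement for Fine--Wilf, the main proof is unaffected.
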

\begin{proof}  If $w = \emptyset$, $aw$ is available. Suppose $l(w) \geq 1$ and that the claim is false, so $aw$ is an initial segment of $w^\infty_1$ for some non-empty $w_1$, $bw$ is an initial segment of $w_2^\infty$ for some non-empty $w_2$ and $l(w) \geq l(w_1)^2, l(w_2)^2$. We can permute $w_1$ cyclically to $v_1$ so that $av_1^{\infty} = w_1^\infty$ holds, and we can correspondingly transform $w_2$ to $v_2$. Observe that the last character of $v_1$ is $a$ and of $v_2$ is $b$. This way, $w$ becomes a prefix of $v_i^{\infty}$ for $i = 1,2$. But $l(w) \geq l(v_1)^2, l(v_2)^2$, hence $l(w) \geq l(v_1) l(v_2)$ and so $v_1^{l(v_2)} = v_2^{l(v_1)}$, by comparing them as initial segments of $w$. However, this implies equality of the last characters of $v_1$ and $v_2$, which is impossible. The claim now follows. \end{proof}
In the same spirit, we prove the following statement:
\begin{proposition}Given a word $w \in X$ either $wa$ or $wb$ is available. \end{proposition}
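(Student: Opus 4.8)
The plan is to mirror the argument of Proposition~\ref{addavailProp}, but now appending a character at the end rather than the beginning. Suppose for contradiction that both $wa$ and $wb$ are forbidden, so $wa$ is a prefix of $u_1^\infty$ for some nonempty $u_1$ with $l(wa) > l(u_1)^2$, and $wb$ is a prefix of $u_2^\infty$ for some nonempty $u_2$ with $l(wb) > l(u_2)^2$. The first step is to arrange the periods nicely: since $wa$ is periodic with period $l(u_1)$, and the period divides some cyclic rotation, I would replace $u_1$ by a cyclic rotation $v_1$ so that $v_1$ ends in the letter $a$ (namely, take the length-$l(u_1)$ factor of $wa$ ending at position $l(wa)$), and similarly replace $u_2$ by a rotation $v_2$ ending in the letter $b$. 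Since $l(w)\geq 1$ and the periods are much shorter than $w$, the word $w$ itself is then a prefix of both $v_1^\infty$ and $v_2^\infty$ — here I would note that $v_i^\infty$ and $(wa)$-or-$(wb)$ agree on a long common prefix, at least of length $l(w)$, so $w$ is a common prefix of $v_1^\infty$ and $v_2^\infty$.

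Next, because $l(w)\geq l(v_1)^2$ and $l(w)\geq l(v_2)^2$ (both follow from $l(wa)>l(u_1)^2$ etc., after checking the rotation does not change lengths and the $+1$ is absorbed), we get $l(w)\geq l(v_1)l(v_2)$. Comparing $v_1^\infty$ and $v_2^\infty$ on their common prefix $w$ of length at least $l(v_1)l(v_2)$ shows $v_1^{l(v_2)} = v_2^{l(v_1)}$ (both equal the length-$l(v_1)l(v_2)$ prefix of $w$). The final step is to extract the contradiction from the last characters: the word $v_1^{l(v_2)}$ ends in the last character of $v_1$, which is $a$, while $v_2^{l(v_1)}$ ends in the last character of $v_2$, which is $b$; but these two words are equal, so $a=b$, which is absurd. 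This completes the proof.

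The main obstacle I expect is the bookkeeping in the first step: making precise that the appropriate cyclic rotation of $u_i$ has the desired last character and that $w$ (not just $wa$ or $wb$) is genuinely a prefix of $v_i^\infty$. The cleanest way is to set $v_1$ equal to the factor of $wa$ occupying positions $l(wa)-l(u_1)+1,\dots,l(wa)$; since $wa$ has period $l(u_1)$ and is long enough (length exceeding $l(u_1)^2 \geq l(u_1)$), this factor is a cyclic rotation of $u_1$ and $wa$ is a prefix of $v_1^\infty$, hence so is $w$. One should also double-check the inequality $l(w)\geq l(v_i)^2$: from $l(wa) = l(w)+1 > l(u_1)^2 = l(v_1)^2$ we only get $l(w)\geq l(v_1)^2$ when $l(v_1)^2$ is an integer and $l(w)+1 > l(v_1)^2$ forces $l(w) \geq l(v_1)^2$, which is fine since all quantities are integers. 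With these points nailed down the rest is identical in structure to Proposition~\ref{addavailProp}.
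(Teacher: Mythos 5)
There is a genuine gap, and it is exactly at the step you flagged as "bookkeeping": the cyclic-rotation trick from Proposition~\ref{addavailProp} does not transfer to appending a letter at the end. If you let $v_1$ be the length-$l(u_1)$ factor of $wa$ ending at position $l(wa)$, it is indeed a cyclic rotation of $u_1$ ending in $a$, but it is in general \emph{not} true that $wa$ (or even $w$) is a prefix of $v_1^\infty$. Concrete counterexample to that intermediate claim: take $u_1=aab$ and $wa=aabaabaaba$ (a prefix of $u_1^\infty$ of length $10>l(u_1)^2=9$, ending in $a$). The length-$3$ suffix is $v_1=aba$, yet $v_1^\infty=abaabaaba\dots$ already disagrees with $wa$ (and with $w$) at the second character. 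Worse, no choice of rotation can repair this: since $l(w)\geq l(u_1)$, any word $v$ of length $l(u_1)$ with $w\leq v^\infty$ must equal the length-$l(u_1)$ prefix of $w$, i.e.\ $u_1$ itself, so you have no freedom to prescribe its last character. The rotation argument works in Proposition~\ref{addavailProp} precisely because \emph{prepending} a character corresponds to shifting the periodic infinite word ($av_1^\infty=w_1^\infty$); appending gives no analogous identity, so the whole "mirror" strategy, and hence the final clash of last characters $a$ versus $b$, cannot be set up this way.

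The statement is nonetheless easy to prove by a more direct comparison, which is what the paper does: from $l(w)\geq l(t_1)^2,\,l(t_2)^2\geq l(t_1)l(t_2)$ one sees that $t_1^{l(t_2)}$ and $t_2^{l(t_1)}$ are both prefixes of $w$ of the same length, hence equal, so $t_1^\infty=t_2^\infty$; but then $wa$ and $wb$ would be two distinct prefixes of the same length of a single infinite word, a contradiction. No rotations, and no appeal to last characters of the period words, are needed.
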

\begin{proof} Suppose contrary, both are forbidden. Let $t_1, t_2$ be words such that $wa\leq t_1 ^\infty$, $wb\leq t_2^\infty$, $l(wa) > l(t_1)^2, l(wb) > l(t_2)^2$. Observe that both $t_1^{l(t_2)}$ and $t_2^{l(t_1)}$ are prefixes of $w$, so being of the same length, they coincide. But, then $t_1^\infty = t_2^\infty$, which is a contradiction, as otherwise $wa \not= wb$ would be two prefixes of the same length of this infinite word.\end{proof}

\begin{corollary} \label{availCor}Let $w$ be a non-empty word. Let $u$ be an initial segment of $w^\infty$, and suppose $l(u) \geq l(w)^2$. Take character $s$ such that $v = us$ is not a prefix of $w^\infty$. Then $v$ is available. \end{corollary}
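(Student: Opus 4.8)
The plan is to argue by contradiction. Suppose $v = us$ is \emph{not} available, i.e. it is forbidden: there is a finite word $t$ with $v$ a prefix of $t^\infty$ and $l(v) > l(t)^2$. Since $l(v) = l(u)+1$, this yields $l(u) \geq l(t)^2$, and by hypothesis we also have $l(u) \geq l(w)^2$. The first small step is to extract from these two ``square-type'' bounds the single mixed bound
\[ l(u) \ \geq\ \max\bigl(l(w)^2,\, l(t)^2\bigr) \ \geq\ l(w)\, l(t), \]
the last inequality being immediate since $a^2 \geq ab$ or $b^2 \geq ab$ according as $a \geq b$ or $b \geq a$.

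Next I would exploit periodicity. Being a prefix of $w^\infty$, the word $u$ has period $l(w)$; being a prefix of $v$, which is a prefix of $t^\infty$, it also has period $l(t)$. One checks that $l(w)\,l(t) \geq l(w) + l(t) - \gcd(l(w), l(t))$ (writing $l(w) = da$, $l(t) = db$ with $d = \gcd(l(w),l(t))$, this reduces to $(a-1)(b-1) \geq 0$), so the length bound from the previous paragraph puts $u$ in the range of the Fine--Wilf theorem, and hence $u$ has period $d := \gcd(l(w), l(t))$.

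Let $q$ be the prefix of $u$ of length $d$. Since $d \mid l(w)$ and $l(u) \geq l(w)$, the prefix of $u$ of length $l(w)$ equals $q^{l(w)/d}$; but that prefix is exactly $w$, because $u$ is a prefix of $w^\infty$. Likewise, since $d \mid l(t)$ and $l(u) \geq l(t)$, the prefix of $u$ of length $l(t)$ equals $q^{l(t)/d}$, and that prefix is exactly $t$, because $u$ is a prefix of $t^\infty$. Therefore $w^\infty = q^\infty = t^\infty$, so $v$ is a prefix of $t^\infty = w^\infty$ --- contradicting the choice of $s$. This completes the argument.

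The only real content is the step $l(u) \geq l(w)\,l(t)$, after which everything is forced: this bound is precisely what is needed both to identify $w$ and $t$ as prefixes of $u$ of the relevant lengths and to land in the Fine--Wilf regime so the two periods of $u$ collapse to their gcd. If one prefers to avoid quoting Fine--Wilf, the same bound shows that $w^{l(t)}$ and $t^{l(w)}$ are both prefixes of $u$, hence equal, and $w^{l(t)} = t^{l(w)}$ forces $w$ and $t$ to be powers of a common word (Lyndon--Schützenberger), again giving $w^\infty = t^\infty$ and the same contradiction; this variant is closest in spirit to the ``compare the two powers as prefixes'' arguments used in the preceding propositions.
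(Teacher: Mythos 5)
Your proof is correct and, in substance, the same argument the paper intends for this corollary (which it leaves as an immediate consequence of the preceding propositions' technique): assume a forbidden-witness $t$, note $l(u)\geq\max(l(w)^2,l(t)^2)\geq l(w)l(t)$, conclude $w^\infty=t^\infty$, and contradict the choice of $s$. The Fine--Wilf detour is harmless but unnecessary: your own closing variant, comparing $w^{l(t)}$ and $t^{l(w)}$ as equal-length prefixes of $u$, gives $w^\infty=(w^{l(t)})^\infty=(t^{l(w)})^\infty=t^\infty$ directly (no Lyndon--Sch\"utzenberger needed), and is exactly the paper's style.
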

Say that a word $w$ is minimal if it is non-empty and given non-empty $u$ for which $u^\infty = w^\infty$ we have $l(w) \leq l(u)$.
\begin{proposition} A non-empty word $w$ is not minimal iff there is word $u$ such that $w = u^k$, some $k \geq2$.\end{proposition}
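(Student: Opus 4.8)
The plan is to prove the two implications separately: the reverse one is a one-liner, and the forward one comes from a short Euclidean-division argument on word lengths, using a minimal-length generator of the infinite word $w^\infty$.

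For the reverse direction, suppose $w = u^k$ with $k \geq 2$. Since $w$ is non-empty, $u$ is non-empty, and $u^\infty = (u^k)^\infty = w^\infty$ while $l(u) = l(w)/k \leq l(w)/2 < l(w)$. Hence $u$ witnesses that $w$ is not minimal. For the forward direction, assume $w$ is non-empty and not minimal; by definition there is a non-empty word having the same infinite power as $w$ but strictly shorter than $w$. Among all non-empty words $u$ with $u^\infty = w^\infty$ choose one, $u_0$, of minimal length, so that $l(u_0) < l(w)$. Write $l(w) = q\,l(u_0) + r$ with $0 \leq r < l(u_0)$; since $l(u_0) < l(w)$ we have $q \geq 1$. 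Because $w$ is the prefix of length $l(w)$ of $w^\infty = u_0^\infty$, this forces $w = u_0^{\,q}\,\tilde u_0$, where $\tilde u_0$ denotes the prefix of $u_0$ of length $r$. It now suffices to show $r = 0$, for then $w = u_0^{\,q}$ with $q \geq 2$ (as $l(u_0) < l(w)$), which is the claim with $u = u_0$ and $k = q$.

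To show $r = 0$, I would argue by contradiction: assume $r > 0$. From $u_0^\infty = w^\infty = (u_0^{\,q}\,\tilde u_0)^\infty$, both expansions begin with $u_0^{\,q}$, so cancelling this common prefix gives $u_0^\infty = \tilde u_0\,(u_0^{\,q}\,\tilde u_0)^\infty = \tilde u_0\,u_0^\infty$. Reading this letter by letter, $u_0^\infty = \tilde u_0\,u_0^\infty$ says exactly that deleting the first $r$ letters of $u_0^\infty$ returns $u_0^\infty$, i.e. $u_0^\infty$ has period $r$; hence $u_0^\infty$ is determined by its prefix of length $r$ and equals $\tilde u_0^\infty$. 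But $\tilde u_0$ is non-empty with $l(\tilde u_0) = r < l(u_0)$, contradicting the minimal choice of $u_0$. Therefore $r = 0$, completing the proof.

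The main point requiring care is the handling of one-sided infinite words: justifying the cancellation of $u_0^{\,q}$ (for which one needs $q \geq 1$, so that the cancellation is non-vacuous) and the passage from $\tilde u_0\,u_0^\infty = u_0^\infty$ to periodicity with period $r$. All of this is elementary bookkeeping. As an alternative to this self-contained route, one could instead invoke the Fine--Wilf periodicity theorem applied to a sufficiently long prefix of $u_0^\infty = w^\infty$ (which has periods $l(u_0)$ and $l(w)$) to conclude it has period $\gcd(l(u_0), l(w))$, and then read off that $\gcd(l(u_0),l(w)) = l(u_0)$ by minimality; but the direct argument above avoids any external citation.
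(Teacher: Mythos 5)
Your proof is correct, and it takes a genuinely different route from the paper. For the non-trivial direction, the paper works directly with an arbitrary witness $u$ of non-minimality: it sets $d = \gcd(l(u), l(w))$, cuts the common word $u^p = w^q$ into $pq$ blocks of length $d$, and uses the two periodicities together with the coprimality of $p$ and $q$ (residues modulo $p$) to force all blocks to coincide, so that $w$ is a power of a word of length $d$. You instead make an extremal choice -- a shortest non-empty $u_0$ with $u_0^\infty = w^\infty$ -- perform Euclidean division $l(w) = q\,l(u_0) + r$, and run a descent: if $r > 0$, cancelling the prefix $u_0^q$ from $u_0^\infty = (u_0^q \tilde u_0)^\infty$ yields $u_0^\infty = \tilde u_0\, u_0^\infty$, hence period $r$ and $u_0^\infty = \tilde u_0^\infty$, contradicting minimality of $l(u_0)$; so $r = 0$ and $w = u_0^q$ with $q \geq 2$. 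All the steps check out (the cancellation is legitimate because $q \geq 1$, and the passage from $u_0^\infty = \tilde u_0 u_0^\infty$ to $r$-periodicity is exactly the letter-by-letter reading you describe). The trade-off: your argument avoids the gcd/block bookkeeping and modular-arithmetic step, and additionally identifies the base of the power as the minimal generator $u_0$; the paper's argument needs no extremal choice and gives the slightly sharper by-product that any shorter witness $u$ forces $w$ to be a power of a word of length $\gcd(l(u), l(w))$ -- in effect a hands-on instance of the Fine--Wilf periodicity lemma you mention as an alternative.
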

\begin{proof} If $w = u^k$ with $k > 1$ then $l(w) > l(u)$ and $w^\infty = u^\infty$, therefore $w$ is not minimal. Suppose now that we have non-empty $w$, for which there exists $u$ such that $u^\infty = w^\infty$, but $l(u) < l(w)$. Write $d = \gcd(l(u), l(w))$, so $l(u) = qd, l(w) = pd$, for some positive integers $p, q$, in particular $p \geq 2$. Further $u^p = w^q = v_1v_2\dots v_{pq}$, where $v_1, v_2, \dots, v_{pq}$ are of length $d$. Considering successive copies of $u$ we have that $v_{i+q} = v_i$, when $i \leq pq - q$ and similarly by looking at copies of $w$ we have $v_{i+p} = v_i$ when $i \leq pq - p$. So $v_i = v_{p+i} = \dots = v_{p(q-1) + i}$ for $i \in[p]$, (where for a positive integer $N$, $[N]$ denotes the set $\{1,2, \dots, N\}$). Observe that $v_q = v_{2q} = \dots = v_{pq}$ and as $p$ and $q$ are coprime $q, 2q, \dots, pq$ take all possible values modulo $p$ hence $v_1 = v_2 = \dots = v_{pq}$, allowing us to conclude $w = v_1 ^ p$, $p > 1$. \end{proof}
Having established these results about the words, we are ready to choose the diametrisable subsets $\mathcal{D}$ of $X$. Consider the following subsets.\\

For all available words $w$, including $\emptyset$, we include $W_w$ in $\mathcal{D}$ (we refer to these as \emph{W-type sets}, i.e say that $S_i$ is of $W$-type if $S_i = W_w$, for some available $w$).\\

For all minimal words $w$, all integers $p, r$ such that $p \in \{2^i: i \in \mathbb{N}_0\}$ and $0 \leq r \leq p-1$, we set $A_{w,p,r} = \{u \in X: u$ \vphantom{a}initial segment of \vphantom{a}$w^\infty, l(u) \in \{r+ip: i \in \mathbb{N}\}, l(u) > l(w)^2\}$ (call these \emph{$A$-type} sets).\\

Finally, for each minimal word $w$ and $k \in \mathbb{N}$, we define $B_{w,k}$, which we refer to as \emph{$B$-type} sets. For these, we need additional notation.\\
First of all, for each $k \in \mathbb{N}$ we define an infinite arithmetic progression $I_k$. We set $I_1 = \mathbb{N}$, $I_2 = I_1 \setminus \{\min I_1\} = \mathbb{N} \setminus\{1\}$. For each integer $m \geq 2$, if $k$ is an integer such that $2^m - 1 \leq k \leq 2^{m} + 2^{m-1} - 2$, we set $I_k = \{s + i.2^{m-1}: i \in \mathbb{N}_0\}$ where $s = \min I_{\frac{k-1}{2} + 2^{m-2}}$, when $k$ is odd; if $k$ is even, then put $I_k = \{s + i.2^{m-1}:i \in \mathbb{N}_0\}$ where $s = \min I_{\frac{k-2}{2} + 2^{m-2}} + 2^{m-2}$. On the other hand, if $k$ is an integer such that $2^{m} + 2^{m-1} - 1 \leq k \leq 2^{m+1} - 2$, then we set $I_k = I_{k - 2^{m-1}} \setminus \{\min I_{k - 2^{m-1}}\}$.\\
Note that, given a minimal word $w$ and $n \in \mathbb{N}$, by the Corollary~\ref{availCor} we get a unique word $w_n$ such that: it is of the form $v_n s_n$ where $v_n \leq w^\infty$ and $s_n$ is a character, $w_n$ is available, and $l(w_n) = l(w)^2 + n$. At last, we define $B_{w,k} = \cup_{i \in I_k} W_{w_i}$.\\

Thus, we set $\mathcal{D} = \{W_w: w \in X, w$ is available$\} \cup \{A_{w, p, r}: w \in X, p, r \in \mathbb{Z}, w$ is minimal, $p \in \{2^i: i \in \mathbb{N}_0\}, 0 \leq r \leq p-1\} \cup \{B_{w,k}: w \in X, k \in \mathbb{N}, w$ is minimal$\}$.\\

To illustrate the definition of $B$-type sets, we list a few examples: $I_{7} = \{4,8,12,16,\dots\}, I_{13} = \{9,13,17,21, \dots\}, B_{a, 7} = W_{aaaab} \cup W_{aaaaaaaab} \cup W_{aaaaaaaaaaaab} \cup \dots, B_{ba, 13} = W_{babababababaa} \cup W_{babababababababaa} \cup W_{babababababababababaa} \cup \dots$. 

Let us make a few easy remarks about $\mathcal{D}$. Fix a minimal word $w$. Then we have $B_{w,1} = W_{w^{l(w)}} \setminus (A_{w,1,0} \cup \{w^{l(w)}\})$. Also, if $m \geq 2$ is an integer, and $k$ is an odd integer that satisfies $2^m-1 \leq k \leq 2^m + 2^{m-1} -2$, then $I_{\frac{k-1}{2} + 2^{m-2}} = I_{k} \cup I_{k+1}$. Furthermore, if $I_{k_1} \cap I_{k_2} \not=\emptyset$ then $I_{k_1} \subset I_{k_2} $ or vice-versa.\\

%\begin{itemize}
%\item $B_{w, 1} = W_{w^{l(w)}} \setminus (A_{w,1,0} \cup\{w^{l(w)}\}) = \cup^\infty_{n=1}W_{w_n}$, where $w_n = v_n s_n$, $v_n$ being prefix of $w^\infty$, $s_n$ character chosen to make $w_n$ available by Corollary~\ref{availCor}, and $l(w_n) = l(w)^2 + n$.
%\item Consider the following tree, shown in the Figure \ref{Figure1}, where all nodes in odd layers have one son and those in even have two.
%\begin{figure}
%\includegraphics[scale = 0.4]{Btype.eps}
%\caption{The auxiliary tree explaining the choice of the diametrisable sets}
%\label{Figure1}
%\end{figure}
%We will have all $B_{w,k}$ to be $\cup_{i \in I_k} W_{w_i}$, where $I_k$ is arithmetic progression of common difference which is a power of 2, and the words $w_i$ are those appearing in the definition of the set $B_{w,1}$. If node $k$ has sons $s_1, s_2$ and $I_k$ has common difference $2^\alpha$, we define $B_{w,s_1} = \cup_{i\in I_{s_1}} W_{w_i}, B_{w,s_2} = \cup_{i\in I_{s_2}} W_{w_i}$, where $I_{s_1}$ and $I_{s_2}$ are the two arithmetic progressions with steps $2^{\alpha+1}$ which cover $I_k$. For completeness, if $s_1 < s_2$ then $\min I_{s_1} < \min I_{s_2}$. If $k$ has one son, say $s$, then we take $I_s = I_k \setminus \{\min I_k\}$ (which is arithmetic progression of same common difference as $I_k$). (Call these \emph{$B$-type} sets.)
%\end{itemize}
For any $U \in \mathcal{D}$, observe that there is a unique word in $U$ of the shortest length, which we will denote by $\sigma(U)$.\\
Let us now establish a few claims about the structure of $\mathcal{D}$, which will be exploited in the rest of the proof. 
\begin{proposition} \label{containProp}If $U\not= V$ are two diametrisable sets and they intersect, then one is contained in the other. Furthermore, if they are not identical, then $U \triangle V$ is infinite.\end{proposition}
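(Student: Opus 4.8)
The plan is to prove the statement by a case analysis on the types ($W$, $A$, $B$) of the two intersecting diametrisable sets $U$ and $V$, reducing everything to combinatorial facts about words and about the arithmetic progressions $I_k$. The underlying intuition is that every diametrisable set is, roughly, a subset of some $W_w$ consisting of the words that extend $w$ and whose length lies in a prescribed residue structure; intersection should force the ``word parts'' to be comparable under the prefix order, and the length-structures to be comparable as well.

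First I would record the easy containments among the building blocks. For $W$-type sets, $W_u \cap W_v \neq \emptyset$ forces one of $u,v$ to be a prefix of the other (any common element has both $u$ and $v$ as prefixes), hence $W_v \subseteq W_u$ or vice versa; and if $u \neq v$ then $W_u \triangle W_v$ is infinite since the longer of the two, say $W_v$, omits infinitely many words of $W_u$ (all those extending $u$ but not $v$). For the $A$-type and $B$-type sets I would first observe that $A_{w,p,r}$ and $B_{w,k}$ only involve words that are (initial segments of $w^\infty$ of bounded type, resp.\ extensions $w_i X$ of such), so they are all contained in $W_{\sigma(\cdot)}$, and more importantly each $A_{w,p,r}$ sits inside $W_w$ and each $B_{w,k} \subseteq W_{w^{l(w)}} \subseteq W_w$; in fact only words with a common minimal ``core'' $w$ can meet, because if $u$ is a long enough initial segment of both $w_1^\infty$ and $w_2^\infty$ for minimal $w_1, w_2$, then $w_1 = w_2$ (this is exactly the uniqueness of the minimal word, proved above, applied to the eventual period of $u$). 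So the genuinely interesting cases have a single fixed minimal word $w$ throughout.

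With $w$ fixed, the heart of the matter is comparing the ``index sets''. For two $A$-type sets $A_{w,p,r}$ and $A_{w,p',r'}$ with $p \le p'$ powers of two: they share a word iff some length lies in both arithmetic progressions $\{r + ip\}$ and $\{r'+i'p'\}$ (above the threshold $l(w)^2$), and since $p \mid p'$ this happens iff $r' \equiv r \pmod p$, in which case $\{r'+i'p'\} \subseteq \{r+ip\}$ and hence $A_{w,p',r'} \subseteq A_{w,p,r}$, with infinite symmetric difference unless $p = p'$ (and then $r = r'$). The analogous statement for $I_{k_1}, I_{k_2}$ — namely that they are nested whenever they meet — is already asserted as an ``easy remark'' in the excerpt, and I would simply invoke it; it then gives $B_{w,k_1} \subseteq B_{w,k_2}$ or conversely, again with infinite symmetric difference in the non-equal case since the $w_i$ are distinct and each contributes a disjoint infinite chunk $w_i X \setminus \bigcup_{j \ne i} w_j X$. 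The mixed cases ($W$ vs.\ $A$, $W$ vs.\ $B$, $A$ vs.\ $B$) are handled by the same mechanism: intersect, note the common element pins down the relevant prefix $w_i$ or the residue class, and then check one of the length-structures refines the other. For instance $W_v$ meeting $A_{w,p,r}$ forces $v$ to be a prefix of $w^\infty$ (or the other way) with $l(v)$ in the progression; depending on which, either $W_v \subseteq A_{w,p,r}$ is impossible because $W_v$ contains words of every sufficiently large length, so in fact $A_{w,p,r} \subseteq W_v$ — here one must be slightly careful and use that $v$ being an initial segment of $w^\infty$ of length $> l(w)^2$ cannot be available unless it is short, so actually $v = \sigma(A_{w,p,r})$ or $v \le w$.

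The step I expect to be the main obstacle is precisely this bookkeeping in the mixed $W$-versus-$A$ and $W$-versus-$B$ cases, where one has to reconcile the ``availability'' restriction defining which $W_v \in \mathcal D$ with the fact that $A$- and $B$-type sets live among long initial segments of $w^\infty$ (most of which are forbidden as stand-alone words). One has to argue that when $W_v$ meets an $A$- or $B$-type set built from $w$, either $v$ is short enough to be a genuine prefix situation ($v \le w$, giving $A \subseteq W_w \subseteq W_v$ or the reverse), or $v$ equals the distinguished shortest word $\sigma$ of that set — and in the $B$-case that $v$ must be one of the $w_i$ or a prefix thereof, using that $w_i = v_i s_i$ with $s_i$ breaking the period. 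The infinitude of the symmetric difference is comparatively routine everywhere: whichever set is larger contains, for infinitely many lengths, a word the smaller one lacks, because the smaller set is cut out either by a stricter residue condition (losing a positive-density set of lengths) or by passing to a proper sub-collection of the $W_{w_i}$ (losing an infinite disjoint family). I would close by assembling the finitely many cases into the two assertions of the proposition.
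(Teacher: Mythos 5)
Your overall strategy (case analysis by type, reduce to prefix-comparability of the word parts plus nesting of the length/index structures) is the same as the paper's, and your $W$--$W$, $A$--$A$ and $W$--$A$ cases are essentially correct: for $A$-type sets the elements really are initial segments of $w^\infty$ of length $>l(w)^2$, so a common element does force $w_1^{l(w_2)}=w_2^{l(w_1)}$ and hence, by minimality, $w_1=w_2$, after which the arithmetic-progression nesting does the rest; and in the mixed $W$--$A$ case the availability of $v$ correctly bounds $l(v)\leq l(w)^2$ and gives $A_{w,p,r}\subset W_{w^{l(w)}}\subset W_v$. (A small slip there: $v$ can never equal $\sigma(A_{w,p,r})$, since that shortest word is forbidden and so $W_{\sigma(A)}\notin\mathcal D$; the correct dichotomy is simply that availability forces $v$ to be short.)

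The genuine gap is your reduction ``only words with a common minimal core $w$ can meet, so the interesting cases have a single fixed minimal word throughout.'' This is false for $B$-type sets, and your $B$--$B$ case only treats $B_{w,k_1}$ versus $B_{w,k_2}$ with the \emph{same} $w$ via nesting of $I_{k_1},I_{k_2}$. An element of $B_{t,k}$ is an arbitrary extension of some $w_i=v_is_i$ in which $s_i$ deliberately breaks the period of $t$, so a common element of $B_{w_1,k_1}$ and $B_{w_2,k_2}$ is not a prefix of either $w_1^\infty$ or $w_2^\infty$, and the uniqueness-of-minimal-word lemma does not pin down a common core. Indeed two $B$-type sets with different minimal words can intersect: e.g.\ with $w_2=a$ and $v_2=a^mb$ an available block of $B_{a,k_2}$, any minimal $w_1$ extending $a^mb$ gives $B_{w_1,k_1}\subset W_{w_1}\subset W_{v_2}\subset B_{a,k_2}$. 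The paper's Case 5 handles exactly this: the two $W$-blocks $W_{v_1}\subset B_{w_1,k_1}$, $W_{v_2}\subset B_{w_2,k_2}$ containing the common element are nested by the $W$--$W$ case; if $v_1=v_2$ one strips the last character and recovers your common-core argument, but if $v_1\neq v_2$ then $v_2$ is a proper prefix of $v_1$ minus its last letter, hence a prefix of $w_1^\infty$, and availability gives $l(v_2)\leq l(w_1)^2$, which yields the wholesale containment $B_{w_1,k_1}\subset W_{v_2}\subset B_{w_2,k_2}$ with no relation between $w_1$ and $w_2$. The same issue recurs (and the same fix applies) in your $A$--$B$ and $W$--$B$ cases, where the paper likewise never assumes the minimal words agree. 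Your infinitude arguments are fine in spirit, but in this missing subcase the right observation is the paper's: the larger set contains infinitely many further $W$-blocks (or, against an $A$-type set, infinitely many available words) that the smaller set misses.
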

\begin{proof} We are going through possible types of $U$ and $V$.
\begin{enumerate}
\item Case 1: $U$ and $V$ are $W$-type.\\
Suppose $U = W_{w_1}, V = W_{w_2}$ without loss of generality $l(w_1) \leq l(w_2)$. If $w_1$ is not initial segment of $w_2$ then $U$ and $V$ do not intersect, so we must have $W_{w_1} \supset W_{w_2}$, that is $U \supset V$. If $U \not= V$, then $w_1 \not= w_2$, and hence $W_{w_1 s} \subset (U \setminus V)$, where $s$ is character for which $w_1 s$ is not prefix of $w_2$. We will use this case for showing the other ones.
\item Case 2: $U$ and $V$ are $A$-type.\\
Suppose $U = A_{w_1, p_1, r_1}, V = A_{w_2, p_2, r_2}$, where $w_1, w_2$ are some minimal words, and $p_1, p_2, r_1, r_2$ are suitable integers. There is word $w \in U \cap V$, so $w$ is initial segment of $w_1^\infty$ and $w_2^\infty$, while $l(w) \geq l(w_1)^2, l(w_2)^2$, from which we deduce that $w_1^{l(w_2)} = w_2^{l(w_1)}$, being the initial segment of $w$ of length $l(w_1) l(w_2)$. Therefore $w_1^\infty = w_2^\infty$ and due to minimality $w_1 = w_2$. Now, due to definition of $A$-type sets for fixed minimal word, we get $U \subset V$ or vice versa and infinite symmetric difference.
\item Case 3: $U$ is $A$-type, $V$ is $B$-type. \\
Let $w\in U \cap V$. This makes $w$ a prefix of some $t^\infty$, where $t$ is minimal and $l(t)^2 < l(w)$, as $U = A_{t,p,r}$, $p$, $r$ being suitable integers. Also $w\in W_{v} \subset V$, some available $v$, so as $v$ is an initial segment of $w$, hence $t^\infty$, we have $l(v) \leq l(t)^2$, since $v$ is available. But then $U \subset A_{t, 1, 0} \subset W_{t^{l(t)}} \subset W_v \subset V$, which proves the first claim.\\
On the other hand $U$ has no available words, but $V$ has infinitely many of these, which gives the second part.
\item Case 4: $U$ is $A$-type, $V$ is $W$-type. \\
Same proof as in the Case 3.
\item Case 5: $U$ and $V$ are $B$-type. \\
Suppose $U = B_{w_1, k_1}, V = B_{w_2, k_2}$, for some minimal words $w_1, w_2$ and positive integers $k_1, k_2$. Let $w \in U \cap V$, thus $w \in W_{v_1} \subset B_{w_1, k_1}$ and $w \in W_{v_2} \subset B_{w_2, k_2}$. By Case 1, without loss of generality, $W_{v_1} \subset W_{v_2}$ holds.\\
If $v_1 = v_2$, letting $u_i$ be $v_i$ without the last character, gives us prefix of $w_i^\infty$, $i \in [2]$, and $l(w_1)^2, l(w_2)^2 \leq l(u_1)$, so as before $w_1^\infty = w_2^\infty$ and due to minimality $w_1 = w_2$. So $U \subset V$ or $V \subset U$, due to construction of $B$-type sets for a fixed minimal word, and also the second part of the claim follows.\\
On the other hand, if $v_1 \not= v_2$, then $v_2\leq u_1$, $u_1$ being $v_1$ after omitting the last character, as before. Further we have that $u_1$ is a prefix of $w_1^\infty$, so $v_2$ is too, but $v_2$ is available, hence $l(v_2) \leq l(w_1)^2$, hence $U \subset W_{v_2} \subset V$.\\
For the second part of the claim, consider any other $W$-type set contained in $V$, distinct form $W_{v_2}$.
\item Case 6: $U$ is $B$-type, $V$ is $W$-type. \\
Let $V = W_v$, some available word $v$, and $w \in U \cap V$, so $w \in W_u \subset U$. If $W_v \subset W_u$, then $V \subset U$, so we are done, and second part follows as in previous case. Otherwise, by Case 1, $W_u \subsetneq W_v$, so $v \not = u$ and $v\leq u$. We have $U = B_{t,k}$, some minimal $t$ and integer $k$, so $v\leq t^\infty$ and available so $l(v) \leq l(t)^2$, hence $U \subset W_v = V$. Also, $(W_v \setminus U) \supset W_{t^{l(t)}} \setminus B_{t, 1} = A_{t, 1} \cup \{t^{l(t)}\}$, which is infinite.
\end{enumerate}
\end{proof}
\begin{proposition} If $U$ is any of the diametrisable sets, then there are unique $V_1, V_2$ (up to ordering) proper subsets of $U$ in $\mathcal{D}$, such that $V_1 \cup V_2$ has almost all elements of $U$.\end{proposition}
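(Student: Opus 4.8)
The plan is to prove existence by exhibiting, for each $U\in\mathcal D$, a concrete pair of \emph{disjoint} proper diametrisable subsets $V_1,V_2\subset U$ with $U=V_1\sqcup V_2\sqcup F$, $F$ finite, and then to deduce uniqueness by an argument that works uniformly once such a disjoint pair is in hand. For existence I would split by the type of $U$. If $U=W_w$ with both $wa$ and $wb$ available, take $V_1=W_{wa}$, $V_2=W_{wb}$, so $V_1\sqcup V_2=W_w\setminus\{w\}$. If $U=W_w$ but, say, $wb$ is forbidden, then $wb$ is an initial segment of $u^\infty$ with $l(wb)>l(u)^2$; passing to the minimal root $t$ of $u$, availability of $w$ forces $l(w)\le l(t)^2<l(wb)=l(w)+1$, hence $w=t^{l(t)}$, and I take $V_1=B_{t,1}$, $V_2=A_{t,1,0}$: by the identity $B_{t,1}=W_{t^{l(t)}}\setminus(A_{t,1,0}\cup\{t^{l(t)}\})$ noted earlier these are disjoint with $V_1\sqcup V_2=W_w\setminus\{w\}$, once one checks $A_{t,1,0}\subseteq W_w$ (every initial segment of $t^\infty$ of length exceeding $l(t)^2=l(w)$ begins with $w$). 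If $U=A_{w,p,r}$, take $V_1=A_{w,2p,r}$, $V_2=A_{w,2p,r+p}$; splitting the progression $\{r+ip:i\in\mathbb N\}$ by the parity of $i$ shows these are disjoint and together omit at most the single element of length $r+p$.

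The delicate case is $U=B_{w,k}$, where one must unwind the recursive definition of the progressions $I_k$. When $I_k$ decomposes as a disjoint union $I_{k'}\sqcup I_{k''}$ — which, by the recorded identity $I_{(k-1)/2+2^{m-2}}=I_k\cup I_{k+1}$ together with the fact that the two progressions on its right lie in distinct residue classes, occurs for a determined set of indices $k$, with $k',k''$ read off explicitly — take $V_1=B_{w,k'}$, $V_2=B_{w,k''}$, which partition $U$. For the remaining $k$, the set $I_k\setminus\{\min I_k\}$ is again a progression $I_{k^+}$ appearing in the list ($k^+=2$ if $k=1$, and $k^+=k+2^{m-1}$ when $2^m-1\le k\le 2^m+2^{m-1}-2$), and I take $V_1=W_{w_{i_0}}$ with $i_0=\min I_k$ and $V_2=B_{w,k^+}$; these partition $U$, and they are disjoint because $l(w_{i_0})<l(w_i)$ for all $i\in I_k$ with $i>i_0$, while $w_{i_0}$ is by construction not an initial segment of $w^\infty$ whereas the length-$l(w_{i_0})$ prefix of $w_i$ is, so $w_{i_0}\not\le w_i$ and the two $W$-type sets cannot be nested.

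For uniqueness, fix $U$ and let $\{V_1,V_2\}$ be the disjoint pair produced above (recall every member of $\mathcal D$ is infinite). First, any proper $T\in\mathcal D$ with $T\subset U$ lies in a \emph{maximal} such set: the collection $\{Z\in\mathcal D:T\subseteq Z\subsetneq U\}$ is linearly ordered by inclusion (any two of its members meet, both containing the nonempty $T$, hence are nested by Proposition~\ref{containProp}) and finite (by \textbf{A2}, only finitely many members of $\mathcal D$ contain $\sigma(T)$), so has a largest element. Next, $V_1$ and $V_2$ are the only maximal proper diametrisable subsets of $U$: if some $Z\in\mathcal D$ with $Z\subsetneq U$ were contained in neither, then $Z$ would meet $V_1$ (otherwise $Z\subseteq V_2\cup F$, and $Z$ being infinite would have to meet and hence strictly contain $V_2$ while $Z\setminus V_2$ stayed finite, contradicting Proposition~\ref{containProp}) and likewise $V_2$; being nested with each and contained in neither, $Z$ would strictly contain both, so $V_1\cup V_2\subseteq Z\subsetneq U$, forcing $U\setminus Z$ finite — again impossible by Proposition~\ref{containProp}. (In particular $V_1,V_2$, being disjoint and infinite, are themselves maximal.) Finally, if $(V_1',V_2')$ is any pair of proper diametrisable subsets of $U$ with $U\setminus(V_1'\cup V_2')$ finite, each $V_i'$ sits inside a maximal proper subset, i.e.\ inside $V_1$ or $V_2$; they cannot both sit inside the same one, since the complement in $U$ of the other is infinite; so after relabelling $V_1'\subseteq V_1$, $V_2'\subseteq V_2$, whence $V_1\setminus V_1'\subseteq U\setminus(V_1'\cup V_2')$ is finite (as $V_2'\cap V_1=\emptyset$), so $V_1'=V_1$ by Proposition~\ref{containProp}, and similarly $V_2'=V_2$.

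The step I expect to be the main obstacle is the $B$-type case in the existence part: one has to read off from the recursive definition of the $I_k$, for every $k$, whether $B_{w,k}$ should split into two $B$-type pieces or into a $W$-type piece together with a $B$-type piece, and name the relevant sub-indices. The remaining verifications (disjointness, properness, and finiteness of the leftover) are routine once the correct pair is written down, and the uniqueness half is handled once and for all by the argument above, using only Proposition~\ref{containProp} and property \textbf{A2}.
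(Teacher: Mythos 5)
Your proposal is correct and follows essentially the same route as the paper: the identical choices of $V_1,V_2$ in each of the three type cases (the doubled progressions for $A$-type, the splitting of $I_k$ dictated by its recursive definition for $B$-type, and $W_{wa},W_{wb}$ or $A_{t,1,0},B_{t,1}$ with $w=t^{l(t)}$ for $W$-type), with uniqueness deduced from Proposition~\ref{containProp} by showing every proper diametrisable subset of $U$ lies inside $V_1$ or $V_2$. The only stylistic difference is your detour through maximal proper subsets, whose appeal to \textbf{A2} is a forward reference (for this $\mathcal D$ that is Proposition~\ref{finProp}, proved later, though independently of this statement) and is in any case superfluous, since your own contradiction argument already yields the needed containment directly, exactly as the paper argues.
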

\begin{proof} Firstly, let us show the existence of such sets. Of course, we go through the possible types of set $U$. The only non-trivial case, that is the one that does not directly follow from construction of $\mathcal{D}$ is the third one.
\begin{enumerate}
\item Case 1: $U$ is of $A$-type.\\
Say $U = A_{w, p, r}$, some minimal $w$ and integers $p, r$. Then, we can take $V_1 = A_{w, 2p, r}, V_2 = A_{w, 2p, p + r}.$
\item Case 2: $U$ is of $B$-type.\\
Say $U = B_{w, k}$, some minimal $w$ and integer $k$. Then, by construction, we have either $B_{w, k} = B_{w, k_1} \cup B_{w, k_2}$, some two integers $k_1, k_2$, or $B_{w,k} = B_{w,k+1} \cup W_{v}$, some available $v$, giving us the needed sets $V_1, V_2$.
\item Case 3: $U$ is of $W$-type.\\
Suppose $U = W_{w}$, for some available $w$. Then, if $wa$ and $wb$ are both available, we can take $V_1 = W_{wa}$ and $V_2 = W_{wb}$.\\
Otherwise, suppose this is not true, and $ws$ is not available for some character $s$. Therefore, there is a word $t$ such that $ws$ is prefix of $t^\infty$, and $l(t)^2 < l(ws) = l(w) + 1$. But $w$ is also a prefix of $t^\infty$ and available so $l(t)^2 \geq l(w)$ so $l(w) = l(t)^2$, i. e. $w = t^{l(t)}$.\\
The next thing to do is to establish minimality of $t$. Let $u$ be any word such that $l(u) \leq l(t)$ and $t^\infty = u^\infty$. Then, as before $ws$ is initial segment of $u^\infty$ and $l(u)^2 < l(ws) = l(w) + 1$, so by same arguments we get $l(w) = l(u)^2$, thus $l(u) = l(t)$, as wanted. But then $W_{w} = A_{t, 1, 0} \cup B_{t, 1} \cup \{w\}$, proving the existence part of the claim.\\ \end{enumerate}
Suppose now that $U'$ is any set strictly contained in $U$. Then, by Proposition~\ref{containProp}, $U\setminus U'$ is infinite, so $U'$ cannot contain both $V_1, V_2$. Also, from the same proposition we get that $V_1$ and $V_2$ are disjoint, as otherwise, one must contain the other and thus be equal to $U$. So, $U'$ intersects at precisely one of $V_1, V_2$, since all diametrisable sets are infinite. W.l.o.g. $U'$ intersects $V_1$. If $V_1 \subsetneq U'$, then $U' \setminus V_1$ is infinite, and thus intersects $V_2$, which is impossible. Hence, by Proposition~\ref{containProp}, $U' \subset V_1$. So, if we had any other $V'_1, V'_2$ with the property in assumption, we would have $V'_1 \subset V_1$ and $V'_2 \subset V_2$, reordering if necessary, (cannot have both sets included in the same $V_i$), so unless these are both equalities $V_1 \cup V_2 \setminus (V'_1 \cup V'_2)$ would be infinite, yielding a contradiction, and concluding the proof.
\end{proof}
\begin{corollary}\label{structCor} If $U$ is a diametrisable set with proper diametrisable subsets $V_1, V_2, U'$, such that $U \setminus (V_1 \cup V_2)$ is finite, then $V_1, V_2$ are disjoint and one of them contains $U'$.\end{corollary}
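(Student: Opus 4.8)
The plan is to derive everything from Proposition~\ref{containProp} (intersecting diametrisable sets are nested, and two distinct intersecting ones have infinite symmetric difference) together with the fact, used already in the proof of the previous proposition, that every diametrisable set is infinite. In particular, if $Y\subsetneq Z$ are diametrisable then $Z\setminus Y=Z\triangle Y$ is infinite. I will not need the uniqueness clause of the preceding proposition: the hypotheses of the corollary already hand us the two sets $V_1,V_2$, and the argument below only uses Proposition~\ref{containProp}. (If one prefers, one can instead invoke uniqueness to identify $\{V_1,V_2\}$ with the canonical pair constructed there and quote the final paragraph of that proof verbatim; the two routes are the same.)

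First I would prove disjointness of $V_1$ and $V_2$. If $V_1\cap V_2\neq\emptyset$, then by Proposition~\ref{containProp} one is contained in the other, say $V_2\subseteq V_1$; then $V_1\cup V_2=V_1$, so $U\setminus V_1$ is finite. But $V_1\subsetneq U$, so Proposition~\ref{containProp} forces $U\setminus V_1$ to be infinite, a contradiction. Hence $V_1\cap V_2=\emptyset$.

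Next I would locate $U'$. Since $U'$ is diametrisable it is infinite, while $U\setminus(V_1\cup V_2)$ is finite, so $U'$ meets $V_1\cup V_2$; without loss of generality $U'\cap V_1\neq\emptyset$. By Proposition~\ref{containProp} either $U'\subseteq V_1$, and we are done, or $V_1\subsetneq U'$. In the second case $U'\setminus V_1$ is infinite; using disjointness, $U'\setminus V_1\subseteq U\setminus V_1\subseteq V_2\cup\bigl(U\setminus(V_1\cup V_2)\bigr)$, so $U'$ meets $V_2$ in infinitely many points, hence meets $V_2$. Proposition~\ref{containProp} then gives $V_2\subseteq U'$ or $U'\subseteq V_2$. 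The first possibility yields $V_1\cup V_2\subseteq U'$, so $U\setminus U'\subseteq U\setminus(V_1\cup V_2)$ is finite, contradicting $U'\subsetneq U$; the second yields $\emptyset\neq V_1\subseteq U'\subseteq V_2$, contradicting $V_1\cap V_2=\emptyset$. So $V_1\subsetneq U'$ is impossible, and therefore $U'\subseteq V_1$, which is exactly what is claimed.

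The whole argument is bookkeeping with finite-versus-infinite inclusions, so there is no genuine obstacle; the only point to be careful about is to use at each step that diametrisable sets are infinite (so that "$U'$ meets $V_1\cup V_2$" really does follow from $U\setminus(V_1\cup V_2)$ being finite, and so that $V_1\subsetneq U'$ really does give $U'\setminus V_1$ infinite). No new idea beyond Proposition~\ref{containProp} is needed.
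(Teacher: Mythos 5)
Your argument is correct and is essentially the paper's own: the corollary is exactly the content of the final paragraph of the proof of the preceding proposition, which likewise uses only Proposition~\ref{containProp} (nestedness of intersecting sets and infiniteness of $U\triangle V$) plus the infiniteness of all diametrisable sets, without invoking the uniqueness clause. Your finite-versus-infinite bookkeeping to show disjointness and then to rule out $V_1\subsetneq U'$ matches that paragraph step for step.
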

\begin{proposition}\label{finProp} Given word $w$, there are only finitely many diametrisable sets containing it.\end{proposition}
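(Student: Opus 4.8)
The plan is to bound separately the number of $W$-type, $A$-type and $B$-type diametrisable sets containing the given word; since a finite union of finite families is finite, this suffices. Write $u$ for the word in question and $\ell=l(u)$; the case $\ell=0$ is trivial (then $u=\emptyset$ lies only in $S_0=X$), so assume $\ell\ge 1$. For the $W$-type sets this is immediate: $u\in W_v$ if and only if $v\le u$, and $u$ has only $\ell+1$ prefixes. For the $A$-type sets, if $u\in A_{w,p,r}$ then $u$ is an initial segment of $w^\infty$ with $\ell>l(w)^2$; since $l(w)\le l(w)^2<\ell$, the word $w$ must equal the prefix of $u$ of length $l(w)$, so $w$ is determined by $l(w)$, and there are only finitely many admissible values $l(w)<\sqrt{\ell}$. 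For each such $w$, the condition $\ell\in\{r+ip:i\in\mathbb N\}$ forces $p\mid \ell-r$ and $\ell\ge r+p$, hence $p\le \ell$; there are finitely many powers of two $p\le\ell$, and for each one the residue $r\in\{0,\dots,p-1\}$ with $r\equiv \ell\pmod p$ is unique. Hence only finitely many $A$-type sets contain $u$.

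For the $B$-type sets, I would first note that for any minimal word $w$ we have $W_{w_i}\subseteq W_{w^{l(w)}}$ for every $i\in\mathbb N$ --- indeed $w^{l(w)}$, being the prefix of $w^\infty$ of length $l(w)^2\le l(w_i)-1$, is a prefix of $w_i$ --- so $B_{w,k}=\bigcup_{i\in I_k}W_{w_i}\subseteq W_{w^{l(w)}}$. Thus $u\in B_{w,k}$ forces $w^{l(w)}\le u$, which as above pins $w$ down and leaves only finitely many minimal words $w$ to consider (those with $l(w)^2\le\ell$). Fixing such a $w$, membership $u\in W_{w_i}$ forces $l(w_i)=l(w)^2+i\le\ell$, so only the finitely many indices $i\in\{1,\dots,\ell-l(w)^2\}$ can witness $u\in B_{w,k}$; consequently $\{k:u\in B_{w,k}\}\subseteq\bigcup_{i=1}^{\ell-l(w)^2}\{k:i\in I_k\}$, and everything reduces to showing that for each fixed $n\in\mathbb N$ the set $\{k:n\in I_k\}$ is finite.

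This last point is where the real work lies, and I would prove it by showing $\min I_k\ge 2^{m-1}$, where $m=m(k)$ denotes the level of $k$ ($m=1$ for $k\in\{1,2\}$, and otherwise the unique $m$ with $2^m-1\le k\le 2^{m+1}-2$), via strong induction on $k$ following the recursive definition. For $k$ in the upper part $2^m+2^{m-1}-1\le k\le 2^{m+1}-2$ one has $\min I_k=\min I_{k-2^{m-1}}+2^{m-1}$ with $k-2^{m-1}<k$ still of level $m$, so the hypothesis gives $\min I_k\ge 2^{m-1}+2^{m-1}$; for $k$ in the lower part one checks that the auxiliary index used in the definition (namely $\tfrac{k-1}{2}+2^{m-2}$ if $k$ is odd and $\tfrac{k-2}{2}+2^{m-2}$ if $k$ is even) is smaller than $k$ and lands in the upper part of level $m-1$, whence its minimum is in turn of the form $\min I_{j'}+2^{m-2}$ with $j'$ in the lower part of level $m-1$, and two applications of the hypothesis again yield $\min I_k\ge 2^{m-1}$. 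Granting $\min I_k\ge 2^{m(k)-1}$, any $k$ with $n\in I_k$ satisfies $2^{m(k)-1}\le n$, i.e.\ $m(k)\le 1+\log_2 n$; since there are finitely many such levels and at most $2^m$ indices of level $m$, the set $\{k:n\in I_k\}$ is finite.

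I expect the main obstacle to be the bookkeeping in that last induction: checking that each auxiliary index genuinely lands in the claimed sub-part of the previous level (with the base level $m=1$, where $\{1\}$ plays the role of the lower part and $\{2\}$ of the upper part, treated correctly) and is strictly smaller than $k$, so that the induction is legitimate and the two-step unfolding in the lower-part case is valid. Everything else --- the $W$- and $A$-type counts and the two reductions above --- is a routine unwinding of the definitions of the sets in $\mathcal D$.
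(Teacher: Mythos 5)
Your proposal is correct and takes essentially the same route as the paper: a case split by type, with the $W$- and $A$-type counts immediate and the $B$-type case reduced to the fact that $\min I_k$ grows without bound as the level of $k$ grows. The only difference is that you carry out the induction establishing this growth (obtaining the stronger bound $\min I_k \ge 2^{m-1}$ at level $m$), whereas the paper merely asserts the weaker estimate $\min I_k \ge m$ for $k \ge 2^m - 1$ and concludes via $l(\sigma(B_{t,k})) \ge l(t)^2 + m$.
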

\begin{proof} If $w \in W_u$, some $u$, then $u$ is prefix of $w$, hence there are only finitely many such sets containing $w$.\\
If $w \in A_{t, p, r}$, then, $l(w) \geq p, l(t)$, so there are only finitely many choices for $t, p, r$.\\
Finally, suppose $w \in B_{t,k}$. Length of $w$ must be greater than $l(t)^2$, which gives us finitely many choices for minimal word $t$. Fix $t$. Recalling the definition of $B$-type sets, we have $B_{t,k} = \cup_{i \in I_k} W_{w_i}$, where $l(w_i) = l(t)^2 + i$. But if $m$ is a nonnegative integer, then $\min I_k \geq m$ for $k \geq 2^m - 1$. Thus, for such $k$, we get $l(\sigma(B_{t,k})) \geq l(t)^2 + m$, so $w \in B_{t,k}$ for only finitely many $k$, as desired.\end{proof}
%Recall the tree we used in construction of $B$-type sets, and suppose $m$ is a node with a child $p$. Then, $l(\sigma(B_{t,m})) \leq l(\sigma(B_{t,p}))$, but if $m$ is in an odd layer, then $p$ is its only child, and we have strict inequality. Hence $k$ must be no deeper than $2l(w)$-th layer, hence there is a bound on $k$ itself, so the claim follows.
These claims serve us to understand better the structure of $\mathcal{D}$. In particular, we can view $\mathcal{D}$ as a binary tree whose nodes are the diametrisable sets, root is $W_{\emptyset}$ and given a set $U\in\mathcal{D}$, its sons $V_1, V_2$ are given by the Corollary~\ref{structCor}. What is not clear, however, is to see that the tree so defined actually contains all the diametrisable sets. But, given any such a set $U\in\mathcal{D}$, we have either $U = W_{\emptyset}$ or $U$ is contained in one of the sons of the root, by Corollary~\ref{structCor}. Proceeding further in this fashion, either we reach $U$, or we get an infinite collection of diametrisable sets whose subset $U$ is. But, that implies that if do not reach $U$, its elements belong to infinitely many members of $\mathcal{D}$, which contradicts Proposition \ref{finProp}. Hence this binary tree has precisely $\mathcal{D}$ for its set nodes. Moreover, to say that a diametrisable set $U_1$ is a subset of another such set $U_2$ is equivalent to having $U_1$ as an ancestor of $U_2$ in this binary tree. To depict what has just been discussed, we include the Figure~\ref{treeFig} which shows the first few layers of tree. We refer to this tree as $\mathcal{T}$.
\begin{figure}
\includegraphics[scale = 0.8]{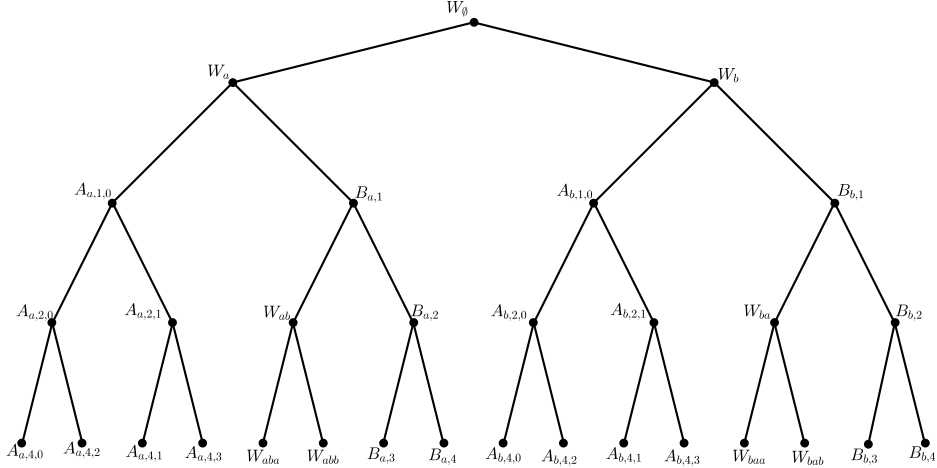}
\caption{Structure of the collection of diametrisable sets}
\label{treeFig}\end{figure}

\section{Ordering $\mathcal{D}$}
In order to finish constructing the counterexample, we must make $\mathcal{D}$ well-ordered, so that then we know which diametrisable set is actually which $S_n$. Defining such an order, and proving that it is in fact what we need, is the purpose of this section.\\
Consider the relation $<$ on $\mathcal{D}$, given as follows: if $U \not= V$, we say $U < V$ if any of these holds:
\begin{itemize}
\item[\textbf{O1}] $l(\sigma(U)) < l(\sigma(V))$
\item[\textbf{O2}] $l(\sigma(U)) = l(\sigma(V))$ and $U \supset V$
\item[\textbf{O3}] $l(\sigma(U)) = l(\sigma(V))$, none is contained in the other and we have that either $U$ is of $A$-type, but V is not, or $U$ is of $B$-type and $V$ is of $W$-type.
\item[\textbf{O4}] $l(\sigma(U)) = l(\sigma(V))$, none is contained in the other, they are of the same type and $\sigma(U)$ is alphabetically before than $\sigma(V)$.
\end{itemize}
\begin{proposition}\label{typeProp}If $U < V$ by \textbf{O2}, then either:
\begin{enumerate}
\item both $U$ and $V$ are $A$-type, or
\item both $U$ and $V$ are $B$-type, or
\item $U$ is $B$-type, $V$ is $W$-type.
\end{enumerate}
\end{proposition}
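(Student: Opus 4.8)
The plan is to unpack what \textbf{O2} asserts and then run a short case analysis on the type of $U$. Saying that $U < V$ holds by \textbf{O2} means $l(\sigma(U)) = l(\sigma(V))$ together with $U \supseteq V$, and since the relation is only defined for $U \ne V$ this is in fact $U \supsetneq V$. The structural facts I intend to use are all immediate from the definitions of the three families: a $W$-type set $W_v$ has $\sigma(W_v) = v$, which is available, and $v \in W_v$; an $A$-type set $A_{t,p,r}$ consists entirely of initial segments of $t^\infty$ of length $> l(t)^2$, every one of which is a forbidden word; and a $B$-type set $B_{t,k} = \bigcup_{i \in I_k} W_{w_i}$ has $\sigma(B_{t,k}) = w_{\min I_k}$ of length $l(t)^2 + \min I_k$, and contains the available words $w_i$ for all $i \in I_k$.

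First I would show $U$ cannot be $W$-type. If $U = W_w$, then $w = \sigma(U)$ is a prefix of $\sigma(V) \in V \subseteq W_w$, and as $l(w) = l(\sigma(U)) = l(\sigma(V))$ this forces $\sigma(V) = w$, so $w \in V$. Now the three possibilities for $V$ each fail: if $V$ is $W$-type then $V = W_w = U$; if $V$ is $A$-type then $w \in V$ is a forbidden word, contradicting availability of $w$; and if $V = B_{t,k}$ is $B$-type then $w = \sigma(V) = w_{\min I_k}$, whence $W_w = W_{w_{\min I_k}} \subseteq B_{t,k} = V \subsetneq W_w$. So $U$ is $A$-type or $B$-type.

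If $U = A_{t,p,r}$ is $A$-type, then every element of $U$, hence of $V \subseteq U$, is a forbidden word; since $W$-type sets and $B$-type sets both contain available words, $V$ must be $A$-type, which is case (1). If $U = B_{t,k}$ is $B$-type, I would rule out $V$ being $A$-type as follows: if $V = A_{s,p,r}$ then $\sigma(V)$ is an initial segment of $s^\infty$ with $l(\sigma(V)) > l(s)^2$, while $\sigma(V) \in V \subseteq U$ lies in some $W_{w_j}$ with $j \in I_k$, so $w_j$ is a prefix of $\sigma(V)$ and hence of $s^\infty$; availability of $w_j$ then gives $l(w_j) \le l(s)^2 < l(\sigma(V))$, yet $l(\sigma(U)) = l(t)^2 + \min I_k \le l(t)^2 + j = l(w_j) < l(\sigma(V))$, contradicting $l(\sigma(U)) = l(\sigma(V))$. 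Hence $V$ is $W$-type (case (3)) or $B$-type (case (2)), and combining the three steps gives exactly the stated alternatives.

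The only place I expect to need a little care is the $W$-type subcase in the second step: one has to first observe that equality of shortest-word lengths pins $\sigma(V)$ down to be $w$ itself, and then use that a $B$-type set strictly contains the $W$-type set generated by its own shortest word (and that $A$-type sets consist of forbidden words only) to close the argument. Everything else is a direct comparison of $\sigma$-lengths together with bookkeeping of which family contains available versus forbidden words; several of these exclusions could alternatively be extracted from the case analysis in the proof of Proposition~\ref{containProp}.
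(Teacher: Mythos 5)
Your argument is correct, and it reaches the conclusion by a different route than the paper. The paper views $\mathcal{D}$ as the binary tree $\mathcal{T}$ coming from Corollary~\ref{structCor}: since $V\subsetneq U$ and $l(\sigma(U))=l(\sigma(V))$, the shortest word is constant along the path from $U$ down to $V$, and one then checks which splits can preserve the father's shortest word ($A$-type splits into two $A$-type sets, $B$-type into a $B$-type and a $W$-type set, while in the $W$-type split $W_{t^{l(t)}}=A_{t,1,0}\cup B_{t,1}\cup\{t^{l(t)}\}$ the word $t^{l(t)}$ lies in neither son, and in the split $W_w = W_{wa}\cup W_{wb}$ the word $w$ lies in neither son); the admissible transitions then give exactly the three listed type pairs. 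You instead argue directly from the definitions of the three families: equality of shortest-word lengths plus $V\subseteq U$ pins down $\sigma(V)$, and then availability versus forbiddenness (every element of an $A$-type set is forbidden; $W$- and $B$-type sets contain available words; $\sigma(B_{t,k})=w_{\min I_k}$ of length $l(t)^2+\min I_k$) rules out $U$ being $W$-type, forces $V$ to be $A$-type when $U$ is, and excludes $V$ of $A$-type when $U$ is $B$-type via the length estimate $l(\sigma(U))\le l(w_j)\le l(s)^2< l(\sigma(V))$. All three exclusions check out (including the implicit uses that $w_j\le s^\infty$ together with availability of $w_j$ forces $l(w_j)\le l(s)^2$, and that $W_{w_{\min I_k}}\subseteq B_{t,k}$). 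What the two approaches buy: yours is self-contained, needing neither the splitting proposition nor the tree picture, only the basic facts about $\sigma$ and availability; the paper's is shorter once the tree machinery is in place and fits the way $\mathcal{T}$ is used elsewhere (e.g.\ in the transitivity proof). Either proof is adequate here.
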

\begin{proof} Regard $\mathcal{D}$ as a binary tree $\mathcal{T}$ that was described in the concluding remarks of the previous section. Then, $U \subset V$ tells us that $U$ is an ancestor of $V$, and as $U < V$ by \textbf{O2}, we have $\sigma(U) = \sigma(V)$. Thus, the shortest word must same for all sets on the path from $U$ to $V$ in $\mathcal{T}$. Now, we analyze the splits, i.e. given a node, what its sons are. Returning back to the choice of the diametrisable sets, we see that $A$-type sets always split into two $A$-type sets, and $B$-type sets split into $B$ and $W$-type sets. We conclude that the claim will be true if when $W$-type set splits into an $A$-type and a $B$-type set, the shortest word of the father is not in any of the sons. But, suppose that we have such a situation, a set $T = W_w$, whose children are $A_{t, 1, 0}$ and $B_{t,1}$. From the proof the Proposition, recall that $w = t^{l(t)}$. But then $w \notin A_{t,1,0}$ and $w\notin B_{t,1}$, as desired. \end{proof}
\begin{proposition}Relation $<$ on the chosen sets is strict total order, and further collection of chosen sets is well ordered under $<$.\end{proposition}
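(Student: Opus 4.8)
The plan is to exhibit $<$ as a lexicographic order: compare first the quantity $\ell(U):=l(\sigma(U))$, and then, among diametrisable sets sharing the same value of $\ell$, use a secondary order read off from \textbf{O2}--\textbf{O4}. The first thing to record is that $U<V$ always forces $\ell(U)\le\ell(V)$, with equality precisely when the witnessing clause is one of \textbf{O2}--\textbf{O4} rather than \textbf{O1}; so everything reduces to understanding, for each fixed $L$, the restriction of $<$ to $\mathcal{D}_L:=\{U\in\mathcal{D}:\ell(U)=L\}$.

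The key structural input is that each level $\mathcal{D}_L$ is \emph{finite}. Indeed, every $U\in\mathcal{D}_L$ contains $\sigma(U)$, a word of length exactly $L$, and there are only $2^L$ such words; by Proposition~\ref{finProp} each of them lies in only finitely many diametrisable sets, so $\mathcal{D}_L$ is contained in a finite union of finite sets. Next I would determine the shape of $<$ on $\mathcal{D}_L$. If $U,V\in\mathcal{D}_L$ and $\sigma(U)=\sigma(V)$, then $U$ and $V$ meet at that common word, so by Proposition~\ref{containProp} one contains the other, and \textbf{O2} decides the order (the larger set is the $<$-smaller). Conversely, if $U\subsetneq V$ with both in $\mathcal{D}_L$, then $\sigma(U)\in V$ has length $L=\ell(V)$, so $\sigma(U)=\sigma(V)$ by uniqueness of shortest words; thus, within a level, comparability under $\subseteq$ is the same as equality of shortest words. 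For an incomparable pair in $\mathcal{D}_L$ (equivalently, $\sigma(U)\ne\sigma(V)$, which by Proposition~\ref{containProp} also makes them disjoint), clauses \textbf{O3}--\textbf{O4} say precisely that $U<V$ iff $(\tau(U),\sigma(U))$ lexicographically precedes $(\tau(V),\sigma(V))$, where $\tau$ assigns $0,1,2$ to the $A$-, $B$-, $W$-types respectively and words of the common length $L$ are compared alphabetically. Trichotomy on $\mathcal{D}_L$, and hence on all of $\mathcal{D}$ (using \textbf{O1} between levels), is immediate from this description.

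Transitivity I would prove by cases on which clauses witness $U<V$ and $V<Z$. If \textbf{O1} is the witness for either relation, then $\ell(U)<\ell(Z)$ and $U<Z$ by \textbf{O1}, so assume $\ell(U)=\ell(V)=\ell(Z)=L$ and split according to which of $\sigma(U)=\sigma(V)$, $\sigma(V)=\sigma(Z)$ hold. If both hold, the two relations are instances of reverse inclusion, which is transitive. If $\sigma(U)=\sigma(V)\ne\sigma(Z)$, then $U\supsetneq V$, and Proposition~\ref{typeProp} forces $\tau(U)=\tau(V)$ unless $U$ is of $B$-type and $V$ of $W$-type; in either subcase one checks that $U$ stands to $Z$ in the same \textbf{O3}/\textbf{O4} relation that $V$ does, using $\sigma(U)=\sigma(V)$. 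The symmetric case $\sigma(U)\ne\sigma(V)=\sigma(Z)$ is handled the same way. The delicate case is $\sigma(U)\ne\sigma(V)$ and $\sigma(V)\ne\sigma(Z)$: here $U<V$ and $V<Z$ are the lexicographic inequalities $(\tau(U),\sigma(U))<(\tau(V),\sigma(V))<(\tau(Z),\sigma(Z))$, hence $(\tau(U),\sigma(U))<(\tau(Z),\sigma(Z))$; if $\sigma(U)\ne\sigma(Z)$ this yields $U<Z$ via \textbf{O3}/\textbf{O4}, while if $\sigma(U)=\sigma(Z)$ (so $\tau(U)<\tau(Z)$, and $U,Z$ are $\subseteq$-comparable) Proposition~\ref{typeProp} rules out $U\subsetneq Z$ — that would make $Z$ of $B$-type and $U$ of $W$-type, i.e.\ $\tau(U)>\tau(Z)$, a contradiction — leaving $U\supsetneq Z$, whence $U<Z$ by \textbf{O2}. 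Reconciling the type ordering forced by such a lexicographic chain with the inclusion data supplied by Proposition~\ref{typeProp} is the main obstacle; the rest is bookkeeping.

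Well-ordering then follows cheaply from the finiteness of levels: given a nonempty $\mathcal{S}\subseteq\mathcal{D}$, let $L_0$ be the least value of $\ell$ attained on $\mathcal{S}$; the collection of members of $\mathcal{S}$ with $\ell=L_0$ is nonempty and finite, hence has a $<$-least element, and by \textbf{O1} that element lies below every member of $\mathcal{S}$ of larger $\ell$, so it is the minimum of $\mathcal{S}$.
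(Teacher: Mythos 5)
Your proof is correct, and it uses the same three ingredients as the paper's: Proposition~\ref{containProp} for trichotomy, Proposition~\ref{typeProp} for transitivity, and Proposition~\ref{finProp} for well-ordering (your final paragraph is essentially the paper's well-ordering argument verbatim). Where you genuinely differ is the organisation of transitivity: the paper runs through nine cases indexed by which of \textbf{O2}, \textbf{O3}, \textbf{O4} witnesses $U<V$ and $V<T$, whereas you first establish the structural dichotomy that, within a level $l(\sigma(\cdot))=L$, containment is equivalent to equality of shortest words (so \textbf{O2} governs exactly the intersecting pairs), and that on disjoint pairs \textbf{O3}--\textbf{O4} amount to the lexicographic order on (type, shortest word) with the type order $A<B<W$. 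This collapses the analysis to four cases according to which of $\sigma(U)=\sigma(V)$, $\sigma(V)=\sigma(Z)$ hold, makes trichotomy immediate, and isolates the one delicate point --- reconciling the lexicographic type data with inclusion when $\sigma(U)=\sigma(Z)$ --- which you resolve correctly: $U\subsetneq Z$ would force, by Proposition~\ref{typeProp}, either equal types or $Z$ of $B$-type and $U$ of $W$-type, both incompatible with $\tau(U)<\tau(Z)$, so $Z\subsetneq U$ and \textbf{O2} applies. The payoff of your route is a cleaner, shorter case analysis; the cost is the preliminary bookkeeping (finiteness of each level, uniqueness of $\sigma$, and the fact that within-level containment forces equal shortest words), all of which you justify from the paper's stated facts, so there is no gap.
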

\begin{proof} If $U, V$ are two distinct sets among the chosen ones, we want that precisely one of $U <V$, $V < U$ is true. \\
Suppose neither of these holds. Hence $\sigma(U) = \sigma(V)$. But then their intersection is non-empty, hence one is contained in the other, so $U < V$ or vice versa by \textbf{O2}.\\
Now, assume that both hold. Therefore, $l(\sigma(U)) = l(\sigma(V))$, none is contained in the other (otherwise we have $U \subset V$ and $V\subset U$, thus $U = V$), they are of same type, and further $\sigma(U) = \sigma(V)$. But as before, these must intersect, leading us to a contradiction.\\
Having proved the trichotomy, we now move to establishing transitivity of the relation. Suppose we have three chosen sets $U, V, T$ such that $U < V< T$, from which $U \not= V$, $V \not= T$ follows. Further, we cannot have $T = U$ as this would imply $U < V < U$. Hence, we can assume that all three sets are distinct. If $\textbf{O1}$ holds for $U < V$ or $V < T$, then it also holds for $U, T$. So, assume this is not true. Then we have the following cases:
\begin{description}
\item Case 1: $U < V$ by \textbf{O2}, $V < T$ by \textbf{O2}. \\
Then $U \supsetneq V \supsetneq T$, so $U < T$ by \textbf{O2} as well.
\item Case 2: $U < V$ by \textbf{O2}, $V < T$ by \textbf{O3}. \\
By Proposition~\ref{typeProp}, we see that (as $V$ cannot be of $W$-type), $U$ and $V$ are of same type. As $V \subset U$ and $V \not \subset T$, then $U \not\subset T$, so either $T \subset U$ and $U < T$ by \textbf{O2}, or $U < T$ by \textbf{O3}.
\item Case 3: $U < V$ by \textbf{O2}, $V < T$ by \textbf{O4}. \\
As above, we get $U \not\subset T$, so either $T \subset U$ implying $T > U$ or if $U$ is $B$-type and $V$ is $W$-type, so $U < T$ by \textbf{O3}, or by Proposition~\ref{typeProp} $U$ and $V$ have the same type, hence so does $T$, and $\sigma(U) = \sigma(V)$ (because of inclusion) so $U < T$ by \textbf{O4}.
\item Case 4: $U < V$ by \textbf{O3}, $V < T$ by \textbf{O2}. \\
If $T\supset U$, then $U \subset V$, which is impossible, so $T \not \supset U$. If $T \subset U$, we're done, otherwise from Proposition~\ref{typeProp} we obtain $U < T$ by \textbf{O3}.
\item Case 5: $U < V$ by \textbf{O3}, $V < T$ by \textbf{O3}. \\
Thus $U$ is of $A$-type, $V$ is of $B$-type, $T$ is of $W$-type. By Proposition~\ref{typeProp}, we conclude that $U \not \subset T$, $T \not \subset U$, so $U < T$ by \textbf{O3}.
\item Case 6: $U < V$ by \textbf{O3}, $V < T$ by \textbf{O4}. \\
By Proposition~\ref{typeProp}, $U \not\subset T$, so either $T \subset U$ giving $U < T$ by \textbf{O2}, or none is contained in the other and $U < T$ by \textbf{O3}.
\item Case 7: $U < V$ by \textbf{O4}, $V < T$ by \textbf{O2}. \\
Having $U$ as a subset of $T$ implies $U \subset V$ which is impossible. So $U \not\subset T$ and if $T \subset U$, then $U < T$ by \textbf{O2}. Otherwise, unless $U$ and $T$ are of same type, we have $U < T$ by \textbf{O3}, due to Proposition~\ref{typeProp}. Finally, from inclusion we deduce $\sigma(T) = \sigma(U)$ and thus $U < T$ by \textbf{O4}.
\item Case 8: $U < V$ by \textbf{O4}, $V < T$ by \textbf{O3}. \\
If $U \subset T$, by Proposition~\ref{typeProp}, we reach contradiction. Hence $U \supset T$, so $U < T$ by \textbf{O2}, or $U < T$ by \textbf{O3} otherwise.
\item Case 9: $U < V$ by \textbf{O4}, $V < T$ by \textbf{O4}. \\
If $T \supset U$ or vice versa, we have $\sigma(T) = \sigma(U)$ which is impossible. As all three sets are of same types, we get $U < T$ by \textbf{O4}.
\end{description}
Finally, given a subset $P$ of $\mathcal{D}$, consider its subset $P'$ of those sets $U \in P$ such that $l(\sigma(U)) = \min\{l(\sigma(V)):V\in P\}$. This is finite by the Proposition~\ref{finProp}, hence we can find the minimal element of $P'$ with respect to $<$, which is smaller than any member of $P \setminus P'$ by \textbf{O1}, making $\mathcal{D}$ well-ordered under $<$.
\end{proof}
Hence, as $\mathcal{D}$ is countable, we can take $S_0 = \min \mathcal{D}$ and for $k \geq 1$, $S_{k} = \min (\mathcal{D} \setminus \{S_0, S_1,\dots ,S_k\})$. Observe that for any given $U \in \mathcal{D}$ there are only finitely many words of length at most $l(\sigma(U))$. The Proposition~\ref{finProp} then tells us that there are only finitely many $V \in \mathcal{D}$ such that $l(\sigma(V)) \leq l(\sigma(U))$. Hence, there are only finitely many $V \in \mathcal{D}$ such that $V < U$, so $U = S_{k}$ for some $k \in \mathbb{N}$. Therefore, $\{S_0, S_1, S_2, \dots\} = \mathcal{D}$. Note also that $S_0 = X = W_\emptyset$ being the only set with shortest word whose length is non-positive.

\subsection{Proof that $\mathcal{D}$ satisfies the required properties}
All that is left is to show that $S_0, S_1, \dots$ satisfy \textbf{A1} - \textbf{A6}. Having done the most of the work already, the proof of the following claims will either be the rather short or the obvious case-examination.
\begin{proposition} $(S_n)_{n \geq 0}$ satisfy \textbf{A1}.\end{proposition}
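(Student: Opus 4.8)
The statement has two parts: that $S_0 = X$, and that intersecting $S_i, S_j$ are nested compatibly with their indices. The first part is immediate from the closing remark of the previous section: $W_\emptyset$ is the unique diametrisable set whose shortest word has length $0$, while every other set $U \in \mathcal{D}$ has $l(\sigma(U)) \geq 1$, so by \textbf{O1} the set $W_\emptyset$ is the $<$-minimum of $\mathcal{D}$, whence $S_0 = W_\emptyset = X$.

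For the second part, fix $i < j$ with $S_i \cap S_j \not= \emptyset$. Since $k \mapsto S_k$ enumerates the distinct members of $\mathcal{D}$, we have $S_i \not= S_j$, so Proposition~\ref{containProp} forces one of $S_i, S_j$ to contain the other. The plan is to derive a contradiction from $S_i \subsetneq S_j$ and thereby conclude $S_j \subset S_i$. Assuming $S_i \subsetneq S_j$, observe that $\sigma(S_i) \in S_i \subset S_j$, so by minimality of $l(\sigma(S_j))$ among lengths of words of $S_j$ we get $l(\sigma(S_j)) \leq l(\sigma(S_i))$. If this inequality is strict, then $S_j < S_i$ by \textbf{O1}; if it is an equality, then, since $S_j \supsetneq S_i$, we get $S_j < S_i$ by \textbf{O2}. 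Either way $S_j < S_i$.

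Now I invoke the fact that the enumeration $S_0, S_1, S_2, \dots$ lists the elements of $\mathcal{D}$ in increasing order for $<$ (it is built as $S_0 = \min \mathcal{D}$ and $S_k = \min(\mathcal{D}\setminus\{S_0,\dots,S_{k-1}\})$, and by the argument following the well-ordering proposition every element appears, so the enumeration has order type $\omega$ and is order-preserving). Hence $S_j < S_i$ yields $j < i$, contradicting $i < j$. Therefore $S_i \subsetneq S_j$ is impossible, and the nesting given by Proposition~\ref{containProp} must be $S_j \subset S_i$, establishing \textbf{A1}. The only delicate point is bookkeeping of the directions of inclusion versus the order relation; the real content, that intersecting diametrisable sets are nested, is already contained in Proposition~\ref{containProp}, and the monotonicity of the enumeration is what turns that nesting into the index-sensitive form required by \textbf{A1}.
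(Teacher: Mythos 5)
Your proposal is correct and follows essentially the same route as the paper: both rest on Proposition~\ref{containProp} to get nesting of intersecting diametrisable sets, and then use the order $<$ (via \textbf{O1}/\textbf{O2} and the monotonicity of the enumeration $S_0, S_1, \dots$) to rule out $S_i \subsetneq S_j$, together with the observation that $W_\emptyset$ is the unique set with shortest word of length $0$, giving $S_0 = X$. The only difference is cosmetic: the paper starts from $S_i < S_j$ and contradicts trichotomy, while you derive $S_j < S_i$ and contradict $i < j$.
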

\begin{proof} Let $S_i, S_j$ be such that $i < j$ and they intersect. Thus, $S_i < S_j$, $l(\sigma(S_i)) \leq l(\sigma(S_j))$ and also $S_i \subset S_j$ or $S_i \supset S_j$, by Proposition~\ref{containProp}. But if $S_i \subset S_j$, then we must have $l(\sigma(S_i)) = l(\sigma(S_j))$ and hence $S_j < S_i$ by \textbf{O2}, which is contradiction. Also $S_0 = X$.\end{proof}
\begin{proposition} $(S_n)_{n \geq 0}$ satisfy \textbf{A2}.\end{proposition}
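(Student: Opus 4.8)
The plan is to observe that this proposition is nothing more than a repackaging of Proposition~\ref{finProp} together with the identification $\{S_0,S_1,S_2,\dots\}=\mathcal{D}$ established at the end of the previous section. A point of $X$ is by construction a finite word $w$, and saying that $w$ lies in only finitely many of the sets $S_n$ is exactly saying that $w$ lies in only finitely many members of $\mathcal{D}$; the latter is precisely the assertion of Proposition~\ref{finProp}. So the proof is a one-line appeal to that proposition.

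If one wants the argument self-contained, I would simply recall its three cases. First, a $W$-type set $W_u$ contains $w$ only if $u$ is a prefix of $w$, and there are only $l(w)+1$ such prefixes. Second, an $A$-type set $A_{t,p,r}$ contains $w$ only if $l(t)\le l(w)$ and $p\le l(w)$, leaving finitely many choices of the triple $(t,p,r)$. Third, a $B$-type set $B_{t,k}$ contains $w$ only if $l(t)^2<l(w)$, which bounds the minimal word $t$; and for each fixed such $t$, since $\min I_k\to\infty$ as $k\to\infty$ (concretely $\min I_k\ge m$ whenever $k\ge 2^m-1$), we get $l(\sigma(B_{t,k}))=l(t)^2+\min I_k>l(w)$ for all sufficiently large $k$, so $w\in B_{t,k}$ for only finitely many $k$. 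Summing the three contributions shows only finitely many diametrisable sets contain $w$, which is \textbf{A2}.

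There is essentially no obstacle here; the only point that needs care is the already-verified fact that the enumeration $S_0,S_1,\dots$ exhausts $\mathcal{D}$, so that \textbf{A2} and Proposition~\ref{finProp} are literally the same statement.
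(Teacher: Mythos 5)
Your proposal is correct and matches the paper exactly: the paper's own justification is the one-line remark that this proposition is precisely Proposition~\ref{finProp} (together with the fact that $S_0,S_1,\dots$ enumerates $\mathcal{D}$), and your optional recap of the three set types simply restates that proposition's proof.
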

Note that this is the Proposition~\ref{finProp}, but we include this as another claim for completeness.
\begin{proposition} $(S_n)_{n \geq 0}$ satisfy \textbf{A3}.\end{proposition}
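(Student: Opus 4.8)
The plan is to work entirely inside the binary tree $\mathcal{T}$ on $\mathcal{D}$ constructed at the end of the previous section, and to use the fact that the position of a set in the ordering $<$ is controlled by the length of its shortest word. Given a positive integer $N$, the first step is to set $L = 1 + \max\{\,l(\sigma(S_i)) : 0 \le i \le N\,\}$, which is a finite quantity. Since there are only finitely many words of length less than $L$, and by Proposition~\ref{finProp} each of them belongs to only finitely many diametrisable sets, the collection $\mathcal{G} = \{\,V \in \mathcal{D} : l(\sigma(V)) < L\,\}$ is finite; note that $S_0, \dots, S_N$ all lie in $\mathcal{G}$. The reason for this choice of $L$ is that every diametrisable set $V$ with $l(\sigma(V)) \ge L$ must equal $S_j$ for some $j > N$: by \textbf{O1}, each $S_i$ with $i \le N$ precedes $V$, so $V$ appears strictly after $S_N$ in the enumeration.

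The second step is to pass to $\mathcal{T}$. Every node of $\mathcal{T}$ has exactly two children (by the proposition preceding Corollary~\ref{structCor}), its node set is all of $\mathcal{D}$, and each node sits at a unique depth. Since $\mathcal{G}$ is finite, I can choose an integer $n$ strictly larger than the depths in $\mathcal{T}$ of all members of $\mathcal{G}$. Then no node at depth $n$ belongs to $\mathcal{G}$; equivalently, every depth-$n$ node $U$ satisfies $l(\sigma(U)) \ge L$, and hence $U = S_{j}$ for some $j > N$ by the previous paragraph.

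The third step is a routine induction on $m$ showing that the $2^m$ nodes at depth $m$ of $\mathcal{T}$ cover all of $X$ except for finitely many points. The base case $m = 0$ is just $X = W_\emptyset$. For the inductive step, each node $U$ at depth $m-1$ differs from the union of its two children by a finite set (this is exactly the defining property of the children in $\mathcal{T}$), so passing from the depth-$(m-1)$ level to the depth-$m$ level removes only finitely many further points, and there are only $2^{m-1}$ nodes to worry about at level $m-1$. Applying this with $m = n$, the $2^n$ sets at depth $n$ can be written as $S_{i_1}, \dots, S_{i_{2^n}}$ with every $i_k > N$, and $X \setminus (S_{i_1} \cup \dots \cup S_{i_{2^n}})$ is finite, which is precisely \textbf{A3}.

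I do not anticipate a genuine obstacle. The only points requiring care are that "large $l(\sigma)$" genuinely forces "large index", which is immediate from \textbf{O1}, and that the depth-$m$ levels of $\mathcal{T}$ are cofinite in $X$, which is simply the iteration of the already-established fact that each node equals the union of its two children up to a finite set. In particular, no further information about the explicit shapes of the sets $W_w$, $A_{w,p,r}$, $B_{w,k}$ is needed beyond what the tree picture records.
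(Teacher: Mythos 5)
Your proof is correct and follows essentially the same route as the paper: both iterate the splitting of Corollary~\ref{structCor} along the binary tree $\mathcal{T}$, losing only finitely many points per split, and use the ordering to force the resulting sets to have index greater than $N$. The only real difference is cosmetic — the paper splits adaptively, using that children have strictly larger index (via \textbf{O1}/\textbf{O2}), whereas you descend to a fixed depth chosen via Proposition~\ref{finProp} so that every node at that depth has a long shortest word and hence index greater than $N$ by \textbf{O1}, which makes the termination of the process a bit more explicit.
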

\begin{proof} Let $N$ be given. By the Corollary~\ref{structCor}, given $S_i$ we can find $S_j, S_k$, disjoint subsets of $S_i$, which cover all but finitely many elements of $S_i$, therefore $i < j,k$ by \textbf{O1} or \textbf{O2}. So, we can start with $S_0$, and perform such splits until we are left with sets $S_{m_1}, S_{m_2}, \dots, S_{m_k}$, with $m_1, m_2, \dots , m_k > N$, which cover almost all elements of $X$, as in each split we lose only finitely many elements. \end{proof}

\begin{proposition} $(S_n)_{n \geq 0}$ satisfy \textbf{A4}.\end{proposition}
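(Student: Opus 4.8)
\section*{Proof proposal}

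The plan is to determine the structure of an arbitrary descending chain $S_{i_1}\supseteq S_{i_2}\supseteq\cdots$ by means of the tree $\mathcal T$, and then to see how prepending a character acts on its ``tail''.

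I would first record two easy facts. By Proposition~\ref{containProp} such a chain is exactly a downward path in $\mathcal T$; by Proposition~\ref{finProp} and the finiteness of the set of words of length at most $L$, only finitely many diametrisable sets have $l(\sigma(\,\cdot\,))\le L$, so $l(\sigma(S_{i_j}))\to\infty$ and, since only finitely many $S_n$ have index $\le N$, any infinite family of pairwise distinct diametrisable sets contains sets of index $>N$. Reading off the possible splits from the construction — an $A$-type set splits into two $A$-type sets, a $B$-type set into a $B$-type and a $W$-type set or into two $B$-type sets, a $W$-type set into two $W$-type sets or into an $A$-type and a $B$-type set — the subtree below an $A$-type node is entirely of $A$-type, so the chain falls into exactly one of: (i) it is eventually an all-$A$-type chain $A_{w,p_j,r_j}$ for a fixed minimal $w$, with the $p_j$ strictly increasing powers of $2$; (ii) it is eventually an all-$B$-type chain $B_{w,k_j}$ for a fixed minimal $w$; (iii) infinitely many $S_{i_j}$ are $W$-type, say $W_{w^{(1)}}\supsetneq W_{w^{(2)}}\supsetneq\cdots$, the $w^{(t)}$ being available words, nested as prefixes, of length tending to infinity, so that $\omega:=\lim_t w^{(t)}$ is an infinite word all of whose sufficiently long prefixes are available.

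In cases (i) and (ii) let $c_0$ be the last letter of $w$ and $c_1$ the other letter. Since $w$ is minimal, $c_0 w^{\infty}=v^{\infty}$ where $v$ is the cyclic rotation of $w$ carrying its last letter to the front, and $v$ is again minimal. A direct computation then gives $c_0 A_{w,p,r}\subseteq A_{v,p,(r+1)\bmod p}$, and, by matching $c_0$ applied to each defining word $w_i$ of $B_{w,k}$ with the defining word $v'_{i+1}$ of the $v$-family and tracing the progressions $I_k$, $c_0 B_{w,k}\subseteq B_{v,k'}$ for some $k'$ with $\min I_{k'}\to\infty$ along the chain; in both cases the right-hand sides are pairwise distinct diametrisable sets along the chain, hence of index $>N$ for $j$ large, which settles $c=c_0$. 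For $c=c_1$ the word $c_1 w^{\infty}$ is eventually periodic but, since $w$ ends in $c_0\ne c_1$, not purely periodic; the key combinatorial input (a Fine--Wilf/periodicity argument in the spirit of Proposition~\ref{addavailProp}) is that its sufficiently long prefixes are available. As every element of $S_{i_j}$ (an $A_{w,p_j,r_j}$ or a $B_{w,k_j}$) has a longest common prefix $u_j$ which is a prefix of $w^{\infty}$ of length $\to\infty$, every element of $c_1 S_{i_j}$ has the prefix $c_1 u_j$, a long prefix of $c_1 w^{\infty}$; hence $c_1 S_{i_j}\subseteq W_{c_1 u_j}\in\mathcal D$, whose $\sigma$-length $l(c_1 u_j)\to\infty$. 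Finally, in case (iii), for each character $c$ the word $c w^{(t)}$ is available for infinitely many $t$: otherwise the nested forbidden prefixes of $c\omega$ would, again by a Fine--Wilf argument, force $c\omega$ — hence $\omega$ — to be purely periodic, contradicting availability of the $w^{(t)}$; for such $t$ one has $c W_{w^{(t)}}=W_{c w^{(t)}}\in\mathcal D$ with $\sigma$-length $l(w^{(t)})+1\to\infty$. In every case we have exhibited, for each character, an index $m$ into the chain and a diametrisable set of arbitrarily large index containing all of $c S_{i_m}$, which is stronger than \textbf{A4} demands.

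The main obstacle is the combinatorics on words underlying the two claims flagged above: that the long prefixes of an eventually-but-not-purely-periodic word generated by a minimal word are available, and that a word all of whose long prefixes are available cannot have $c\omega$ (for a character $c$) with all long prefixes forbidden. Both are Fine--Wilf type periodicity statements, and pinning down the finite length threshold beyond which a small period propagates to a contradiction (with minimality of the generating word, or with its first letter being $c_1\ne c_0$) is the delicate point; the remaining bookkeeping — notably verifying $c_0 B_{w,k}\subseteq B_{v,k'}$ by unwinding the recursive definition of the $I_k$ — is routine but tedious, as the preamble to this subsection warns.
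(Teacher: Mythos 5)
Your cases (i) and (ii) are essentially workable (modulo the bookkeeping you flag, and modulo a smaller slip: nested $B$-type sets need not share a minimal word, since in Case 5 of Proposition~\ref{containProp} equality of the minimal words is only forced when the two $W$-components coincide; but as your argument there is applied to each chain element separately, this is repairable). The genuine gap is in case (iii). The intermediate claim that \emph{for each} character $c$ the word $cw^{(t)}$ is available for infinitely many $t$ is false, and the sketched justification fails because the lengths $l(w^{(t)})+1$ may be very sparse: forbiddenness of the sparse prefixes $cw^{(t)}$ does not force any Fine--Wilf overlap, since between consecutive chain lengths the minimal period may jump by an arbitrary amount without the two periods ever coexisting on a word of length at least their sum, so no periodicity of $c\omega$ follows. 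Concretely, set $u_1=ba$ and recursively $u_{k+1}=u_k^{p_k}a$ with $p_k=l(u_k)$; one checks that each $u_k$ is primitive, starts with $b$ and ends with $a$, that $w^{(k)}:=u_k^{p_k}$ has minimal period exactly $p_k$ and length $p_k^2$, hence is available, and that the $w^{(k)}$ are nested, so $W_{w^{(1)}}\supsetneq W_{w^{(2)}}\supsetneq\cdots$ is a legitimate chain of your type (iii). Yet $aw^{(k)}$ is a prefix of $v_k^\infty$, where $v_k$ is the rotation of $u_k$ carrying its last letter to the front, and has length $p_k^2+1>p_k^2$, so $aw^{(k)}$ is forbidden for \emph{every} $k$: for instance $w^{(1)}=baba$ and $w^{(2)}=(babaa)^5$ are available while $ababa$ and $a(babaa)^5\leq (ababa)^\infty$ are forbidden. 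Moreover the limiting word $\omega$ is not eventually periodic, so there is no contradiction to be extracted from these forbidden prefixes; your claim simply fails for the character $a$ on this chain, even though \textbf{A4} itself of course still holds there.

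What is true, and what the paper uses, is weaker but sufficient: you only need arbitrarily long available prefixes of $c\omega$ at \emph{some} lengths, not availability of the specific words $cw^{(t)}$, because for any available prefix $v$ of $c\omega$ and any $t$ with $l(w^{(t)})+1\geq l(v)$ one has $cS_{i_j}\subseteq cW_{w^{(t)}}=W_{cw^{(t)}}\subseteq W_v$, and $l(v)$ can be taken larger than any given bound. By Proposition~\ref{addavailProp} this is automatic for at least one character; for the other character one must rule out that \emph{all} sufficiently long prefixes of $c\omega$ are forbidden, and that argument genuinely needs consecutive lengths: tracking the minimal period $t_n$ of the length-$n$ prefix, it is non-decreasing, and at the first index where it strictly increases the two periods live together on a prefix of length at least their product, so the power-comparison argument applies and gives a contradiction; if it never increases, $c\omega$ is purely periodic and then the long $w^{(t)}$ would themselves be forbidden. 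This is exactly the paper's Case 2, and your case (iii) needs to be replaced by it (or an equivalent argument); in cases (i) and (ii), by contrast, your use of availability of long prefixes of $c_1w^\infty$ (with $c_1$ not the last letter of the minimal word) is a correct, mildly different route to what the paper obtains by exhibiting the specific sets $W_{c_1w^n}$ and $W_{c_1u}$.
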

\begin{proof} Suppose that $S_{i_1} \supset S_{i_2} \supset \dots$ for some $i_1 < i_2 < \dots$. As usual, we consider different cases.
\begin{description}
\item Case 1: There is set of $A$-type among these.\\
Let $S_{i_k}$ be such a set. As $A$-type set splits into $A$-type sets, we have that when $n \geq k$, $S_{i_n} = A_{w, p_n, r_n}$, some minimal word $w$, and integers $p_n, r_n$, and $p_{n+1} > p_n$, so in fact $p_{n+1} \geq 2p_n$, as these are powers of 2. In particular, we deduce that for sufficiently large $n$, $p_n > n l(w)$ and thus $w^n$ is prefix of all words in $S_{i_n}$ implying $S_{i_n} \subset W_{w^n}$. By the Proposition~\ref{addavailProp}, one of $aw^n, bw^n$ is available, w.l.o.g. the former is true, hence $aS_{i_n} \subset W_{aw^n}= S_{j_n}$, some $j_n$.\\
Consider cyclic permutation $u$ of $w$, such that $sw = us$, $s$ being the last character of $w$. Let $v$ be a word such that $v^\infty = u^\infty$ and $l(u) \geq l(v)$. But then for cyclic permutation $t$ of $v$ such that for character $s'$ we have $s't = vs'$, we have $s' t^\infty = v^\infty = s w^\infty$, so $l(u) = l(t) \geq l(w) = l(v)$, proving the minimality of $v$. If $s = a$, we would have that $aw^n$ is not available as it would be initial segment of $u^\infty$, but $l(u) = l(w)$. Hence $s = b$, and $bS_{i_n} \subset A_{v, p_n, r} = S_{j_n}$, where $r = r_n+1$, unless $r_n = p_n - 1$ and then $r = 0$, $j_n$ suitable index. As $j_n$ tends to infinity as $n$ does, $\textbf{A4}$ holds in this case.
\item Case 2: There are no $A$-type sets, but there infinitely many of $W$-type.\\
Denote $W$-type sets among these by $W_{w_1} \supsetneq W_{w_2} \supsetneq \dots$. As $w_{i}$ is prefix of $w_{i+1}$, for all $i$, these define an infinite word $w$ whose initial segments the words $w_i$ are. By the Proposition~\ref{addavailProp}, w.l.o.g. $aw_i$ is available infinitely often, so we can take $W_{aw_{i_1}}$ for suitable $i_1 < i_2 < \dots$, to establish $a$ part of the claim.\\
Similarly, if there are infinitely many initial segments of $bw$ available, choosing these and their corresponding $W$-type sets establishes the claim. Suppose contrary, i. e. there is $m$ such that prefixes of $bw$ of length greater or equal to $m$ are all forbidden. Denote by $u_n$ initial segment of $bw$ of length $n$, and let $t_n$ be the shortest word such that $l(t_n) ^ 2 < l(u_n)$ and $u_n$ is initial segment of $t_n^\infty$. Now, suppose there is no $n \geq m$ such that $l(t_{n+1}) > l(t_n)$, hence $l(t_m) \geq l(t_{m+1}) \geq \dots$, so $bw = t_n^\infty$, some $n$. But these means that $w_i$ are forbidden from some point, resulting in contradiction. So, there must be such an $n \geq m$, so pick the smallest possible. Hence $u_{n+1}$ is not the initial segment of $t_n^\infty$, but $u_n$ is both initial segment of $t_n^\infty$ and $t_{n+1}^\infty$. Further, $l(u_n) \geq l(t_n)^2, l(t_{n+1})^2$ so as before, $t_n^\infty = t_{n+1}^\infty$, but $u_{n+1}$ is prefix of the later, but not former, giving us contradiction.
\item Case 3: Almost all sets are of $B$-type.
Given $S_{i_n} = B_{w, k}$, let $s$ be the last character of $w$, so $sw^{l(w)}$ is forbidden. Let $u$ be the cyclic permutation of $w$ such that $sw = us$, and so $u$ is minimal as well, by arguments in Case 1 of this proof. Then, $sB_{w,k} = B_{u, l}$ or $sB_{w,k} = B_{u, l} \setminus W_{v}$ for suitable index $l$ and word $v$, so take $S_{j_n} = B_{u, l}$. Let $s'$ be the character not equal to $s$ and $u$ be $\sigma(B_{w,k})$ without the last character. Then as $s u$ is forbidden, so $s'u$ is available and $s'B_{w,k} \subset W_{s'u} = S_{j_n}$. As $j_n$'s tend to infinity as $n$ does, we're done.
\end{description}
\end{proof}
\begin{proposition} $(S_n)_{n \geq 0}$ satisfy \textbf{A5}.\end{proposition}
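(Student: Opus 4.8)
The plan is to handle the three kinds of diametrisable set separately, and in each case to prepend a character $c$ so that $cS_i$ is contained in a $W$-type set that comes \emph{later} in the order $<$. Two elementary observations drive everything: if $U\subseteq V$ are diametrisable sets then $l(\sigma(V))\le l(\sigma(U))$ (a shortest word of $V$ lies in the smaller set too), and for any character $c$ and any $T\subseteq X$ we have $\sigma(cT)=c\,\sigma(T)$, so prepending a character increases the length of the shortest word by exactly one. I will repeatedly use Proposition~\ref{addavailProp}, which supplies, for any word $u$, a character $c$ with $cu$ available.

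If $S_i=W_w$, choose $c$ with $cw$ available; then $cS_i=W_{cw}\in\mathcal D$ and $l(\sigma(W_{cw}))=l(w)+1>l(w)=l(\sigma(W_w))$, so $W_w<W_{cw}$ by \textbf{O1} and we may take $S_j=W_{cw}$. This also covers $S_0=W_\emptyset$, since $a$ and $b$ are available. If $S_i=A_{w,p,r}$, write $\sigma=\sigma(A_{w,p,r})$; every element of $A_{w,p,r}$ is an initial segment of $w^\infty$ of length at least $l(\sigma)$, hence has $\sigma$ as a prefix, so choosing $c$ with $c\sigma$ available gives $cS_i\subseteq W_{c\sigma}\in\mathcal D$, with $l(\sigma(W_{c\sigma}))=l(\sigma)+1$, so \textbf{O1} applies again. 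Here it is worth pointing out that $cS_i$ itself will generally not be a diametrisable set: prepending the last letter of $w$ does produce an $A$-type set, but one that occurs \emph{earlier} than $A_{w,p,r}$ in the order, which is exactly why one must prepend the other letter and pass to the $W$-type superset $W_{c\sigma}$.

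The $B$-type case $S_i=B_{w,k}$ is the one that needs care. Let $m=\min I_k$, so $\sigma(B_{w,k})=w_m=v_m s_m$ with $v_m$ an initial segment of $w^\infty$ and $l(v_m)=l(w)^2+m-1\ge l(w)^2$; let $s$ be the last letter of $w$ and $s'$ the other letter. Every element of $B_{w,k}$ lies in some $W_{w_{i'}}$ with $i'\ge m$ and therefore has $v_m$ as a prefix, so every element of $s'B_{w,k}$ has $s'v_m$ as a prefix, giving $s'S_i\subseteq W_{s'v_m}$. To see that $W_{s'v_m}\in\mathcal D$: the word $sv_m$ is an initial segment of $v^\infty$, where $v$ is the cyclic rotation of $w$ carrying $s$ to the front (as in Case~1 of the proof of \textbf{A4}), and $l(sv_m)=l(v_m)+1>l(w)^2=l(v)^2$, so $sv_m$ is forbidden; Proposition~\ref{addavailProp} then forces $s'v_m$ to be available. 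Now $l(\sigma(W_{s'v_m}))=l(v_m)+1=l(w_m)=l(\sigma(B_{w,k}))$, so \textbf{O1} is of no use; instead one checks that $B_{w,k}$ and $W_{s'v_m}$ are disjoint — a common element would extend both $v_m$ and $s'v_m$, forcing $v_m$ to be a prefix of $s'v_m$, hence a constant word, hence (using $l(v_m)\ge l(w)$ and minimality of $w$) $w=s'$, contradicting the choice of $s'$ — and then invokes \textbf{O3}, which orders a $B$-type set before a $W$-type set with shortest word of the same length. This yields $B_{w,k}<W_{s'v_m}$, so $S_j=W_{s'v_m}$ works.

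In all cases we obtain $c\in\{a,b\}$ and $j>i$ with $cS_i\subseteq S_j$, which is \textbf{A5}. The main obstacle is precisely the $B$-type case: the more obvious prepended letter keeps us inside the $B$-type part of the structure tree and can decrease the index, so the proof is forced to prepend the opposite letter, land in a $W$-type set of equal $\sigma$-length, and rely on the tie-breaking clause \textbf{O3} rather than on \textbf{O1}.
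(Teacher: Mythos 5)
Your proof is correct and follows essentially the same route as the paper: prepend the character that makes the relevant shortest word available, land in a $W$-type superset, and conclude via \textbf{O1} for $A$- and $W$-type sets and via disjointness plus \textbf{O3} for $B$-type sets (your disjointness argument, forcing $v_m$ to be a constant word, is the same in substance as the paper's). The only quibble is your parenthetical aside that prepending the last letter of $w$ to an $A$-type set always produces a set occurring \emph{earlier} in the order --- that is not true in general (for instance it can fail when $p=1$, where the resulting $A$-type set has a shortest word of the same length and the comparison falls to \textbf{O4}), but the remark is purely motivational and plays no role in your argument.
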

\begin{proof} Let $S_n$ be given. If it is of $A$ or $W$-type, $S_n \subset W_{\sigma(S_n)}$, so choose character $s$ such that $s\sigma(S_n)$ is available and hence $sS_n \subset W_{s\sigma(S_n)}$ and $S_n < W_{s\sigma(S_n)}$, since $l(\sigma(S_n)) < l(\sigma(W_{s\sigma(S_n)}))$. If $S_n$ is of $B$-type, then let $u$ be the shortest word in $S_n$ after erasing the last character, so $S_n \subset W_u$ and once again choose $s \in \{a,b\}$ for which $su$ is available, hence $sS_n \subset W_{su}$. If $W_{su}$ intersects $W_u$, it must be its subset and $u$ prefix of $su$, but then $u = s^{l(u)}$, so $su$ is forbidden, as $l(u) \geq 1$. So $W_{su}$ and $W_u$ are disjoint, hence $W_{su}$ and $S_n$ are also. Combining this with $l(\sigma(S_n)) = l(\sigma(W_{su})$ and comparing the types gives $S_n < W_{su}$ by \textbf{O3}.\end{proof}

\begin{proposition} $(S_n)_{n \geq 0}$ satisfy \textbf{A6}.\end{proposition}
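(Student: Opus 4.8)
The plan is to fix, once and for all, a nonempty word $w$, of length $\ell\geq 1$, and to bound separately, over the three families of diametrisable sets ($W$-type, $A$-type, $B$-type), the number of $U\in\mathcal{D}$ that admit a word $u$ with both $u\in U$ and $wu\in U$; each such bound will be finite in terms of $\ell$, which is exactly \textbf{A6}. The recurring mechanism is this: the hypothesis $u\in U$ pins $u$ (or, for $B$-type sets, the truncation of $u$ recorded by the index of the relevant $W$-subset) down to being a long prefix of some $t^\infty$ or to having period $\ell$, and then either availability of words, minimality of $t$, or the fact that the index sets $I_k$ are arithmetic progressions leaves only finitely many possibilities.

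For a $W$-type set $W_v$ ($v$ available): if $l(v)\leq\ell$ then $v\leq wu$ forces $v\leq w$, so $v$ is one of the $\ell+1$ prefixes of $w$. If $l(v)>\ell$, comparing the prefix relations $v\leq u$ and $v\leq wu$ gives $v=wv'$ with $v'$ a prefix of $v$, i.e.\ $v$ has period $\ell$; if moreover $l(v)>\ell^2$ then $v$ is a prefix of $p^\infty$, where $p$ is its length-$\ell$ prefix, with $l(v)>l(p)^2$, so $v$ is forbidden --- a contradiction. Hence $l(v)\leq\ell^2$, and finitely many $v$ qualify. For an $A$-type set $A_{t,p,r}$: $u,wu\in A_{t,p,r}$ forces $u,wu\leq t^\infty$ with $l(u)>l(t)^2\geq l(t)$, hence $w\leq t^\infty$ and (comparing along the first $l(u)$ letters) $t^\infty$ is invariant under the shift by $\ell$; so $t^\infty$ has period $\gcd(\ell,l(t))$, and minimality of $t$ forces $l(t)\mid\ell$ and $w=t^{\ell/l(t)}$ --- so $t$ is the primitive root of $w$, hence unique. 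For that $t$, writing $l(u)=r+ip$ and $l(wu)=r+jp$ gives $\ell=(j-i)p$, so $p\mid\ell$; only finitely many powers of two $p$ divide $\ell$, and $0\leq r<p$, so finitely many sets survive.

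The $B$-type case is the main obstacle. Here $U=B_{t,k}=\bigcup_{i\in I_k}W_{t_i}$, where $t_i$ is the length-$(l(t)^2+i)$ word equal to the prefix $v_i$ of $t^\infty$ of length $l(t)^2+i-1$ followed by the character that makes it cease to be a prefix of $t^\infty$. Membership $u,wu\in U$ produces $i,j\in I_k$ with $t_i\leq u$ and $t_j\leq wu$; in fact $i$ is determined by $u$, since $t_i\leq u$ records the exact place where $u$ first departs from $t^\infty$, and likewise $j$ by $wu$. I split on the size of $\ell$. If $\ell\geq l(t_j)-1$, i.e.\ $l(t)^2+j\leq\ell+1$, then $l(t)$ and $j$ are bounded in terms of $\ell$, and since $\min I_k\geq m$ whenever $k\geq 2^m-1$ (established in the proof of Proposition~\ref{finProp}), $k$ is bounded too. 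Otherwise $\ell<l(v_j)$, so the first $\ell$ letters of $t_j=wu[1..l(t_j)]$ are simultaneously $w$ and a prefix of $t^\infty$, giving $w\leq t^\infty$; now combining $t_i\leq u$, $t_j\leq wu$ with the identity $wu=w\cdot u$ forces a prefix of $t^\infty$ of some length $L\geq 1$ to have period $\ell$. If $L\geq l(t)$, then, because $t^\infty$ already has period $l(t)$, having period $\ell$ on $l(t)$ consecutive positions forces period $\ell$ throughout $t^\infty$, so again $l(t)\mid\ell$ and $t$ is the primitive root of $w$; moreover, locating the position at which $wu$ first deviates from $t^\infty$ via the two descriptions $wu=w\cdot u$ and $t_j\leq wu$ yields $j=i+\ell$, so $I_k$ contains two elements differing by $\ell$, forcing the common difference $2^{m-1}$ of $I_k$ to divide $\ell$, which bounds $m$ and hence $k$. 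If instead $L<l(t)$, then $L$ is necessarily the term $l(t)^2+j-1-\ell$, and $L<l(t)$ already bounds $l(t)$ and $j$, hence $k$. In every branch only finitely many pairs $(t,k)$ remain.

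I expect the bookkeeping in this last case to be where most of the care goes: isolating the unique indices $i,j$; making precise that period $\ell$ on a length-$L\geq l(t)$ prefix of $t^\infty$ propagates to period $\ell$ everywhere (so that $l(t)\mid\ell$); and extracting the equality $j=i+\ell$ from the two different ways of locating the first deviation of $wu$ from $t^\infty$. Once the three type-by-type bounds are in hand, adding them shows that only finitely many diametrisable sets contain some $u$ together with $wu$, which is \textbf{A6}.
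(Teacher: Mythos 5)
Your proposal is correct and follows essentially the same route as the paper: a case split by set type, periodicity/minimality arguments for words (bounding $l(v)\leq l(w)^2$ in the $W$-case, forcing $t$ to generate $w^\infty$ in the $A$- and $B$-cases), and finishing the $B$-type case by observing that the two relevant indices of $I_k$ differ by exactly $l(w)$, so the common difference $2^{m-1}$ of $I_k$ (which grows with $k$) is bounded, hence so is $k$. Your bookkeeping via first-deviation positions (giving $j=i+\ell$, and the divisibility statements $l(t)\mid\ell$, $p\mid\ell$ in the $A$-case) is a slightly cleaner rendering of the paper's conclusion $w_2=ww_1$, but it is the same underlying argument.
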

\begin{proof} Let $w \not=\emptyset$ be given. We will divide the proof into three sections, each showing the claim for particular set type.
\begin{itemize}
\item Suppose for some word $u$ we have $u, wu \in S_n = A_{t, p, r}$, where $t$ is minimal. Hence $l(u) > l(t)^2$. Suppose further $l(w) < l(t)$. So $w$ is initial segment of $u$, as $u$ is initial segment of $wu$. But then, $w^2$ is also initial segment of $u$, etc. up to $w^{l(t)}$ as $l(u) > l(t)^2 > l(w)l(t)$. Hence $t^{l(w)} = w ^{l(t)}$, but $t$ is minimal so $l(w) \geq l(t)$. Hence we have only finitely many choices for $A$-type $S_n$, as $l(w) \geq l(t)$ and $p \leq l(w)$, must hold. 
\item If $S_n$ is of $W$-type, say $W_v$, we have $u,wu \in W_v$, $v$ available. As there are only finitely many $v$ such that $l(v) \leq l(w)$, w.l.o.g. $l(v) > l(w)$. We have $u = vr_1$, some word $r_1$. Hence $wvr_1 \in W_v$ so $wv \in W_v$. But then $wv = vr_2$, some $r_2$, so $v = wv_1$, for some $v_1$, as $l(w)  < l(v)$, and $wwv_1 = wv_1r_2$ implies $wv_1 = v_1r_2$. We can iterate this until $v = w^kv_k, l(v_k) < l(w)$. But $wv_k = v_kr_{k+1}$, some $r_{k+1}$, making $v$ initial segment of $w^\infty$, but it is available hence $l(v) \leq l(w)^2$, therefore we have finitely many choices for $S_n$ of $W$-type.
\item Finally, suppose $S_n$ is $B$-type, and $u, wu \in S_n$. Then we have two available words $w_1s_1, w_2s_2$, where $s_1, s_2$ are characters, $w_1, w_2$ prefixes of $t^\infty$, some minimal word $t$, $u\in W_{w_1s_1}, wu \in W_{w_2s_2}, l(w_1), l(w_2) \geq l(t)^2$. So $u = w_1s_1r_1$, $wu = w_2s_2r_2$ holds for some words $r_1, r_2$.\\
    Suppose $l(w) \leq l(t)$. Let $wu'$ be the initial segment of $wu$ of length $l(t)^2$ thus it is prefix of $w_2$ hence of $t^\infty$. Thus $wt^{l(t)-1}$ is prefix of $t^{l(t)}$. Hence, $w^k t^{l(t) - k}$ is prefix of $t^{l(t)}$ for all $k \in [l(t)]$. In particular, $w^{l(t)}$ is prefix of $t ^ {l(t)}$, thus $w^{l(t)} = t^{l(w)}$, implying $w^\infty = t^\infty$, but $t$ is minimal, so $w = t^\alpha$, some $\alpha \geq 1$, so $w=t$ as $l(w) \leq l(t)$. Now we have $wu = w_2s_2r_2 = ww_1s_1r_1 = tw_1s_1r_1$, which differ from $t^\infty$ for the first time at $s_1$ and $s_2$, hence $w_2 = tw_1$. But for only finitely many $k$  can we have $B_{w,k}$ having such $W_{w_1s_1}, W_{w_2s_2}$ as subsets due to construction of $B$-type sets, and the fact that the common difference of arithmetic progression $I_k$ is at least $k/4$.\\
    Now, suppose $l(w) > l(t)$. If the claim is to be false, we can assume without loss of generality, that for infinitely many $k$ we have some $u \in B_{t, k}$ and $wu \in B_{t,k}$ too. Hence we can assume that there are $w_1, w_2$ prefixes of $t^\infty$, with some characters $s_1, s_2$, such that $w_1s_1, w_2s_2$ are available, $u \in W_{w_1s_1}, wu \in W_{w_2s_2}$, $w_1, w_2$ arbitrarily long, thus say $l(w_1), l(w_2) > l(w)^2$. So $u = w_1s_1r_1$, $wu = w_2s_2r_2$ for some words $r_1, r_2$, thus $wu = ww_1s_1r_1 = w_2s_2r_2$. Hence $t^{l(w)}$ is prefix of $w_2$ as $l(w_1) > l(w)^2 > l(t)^2$, and similarly $wt^{l(w)-1}$ is initial segment of $ww_1$ so both are prefixes of $wu$ hence $t^{l(w)}$ is prefix of $wt^{l(w)-1}$, which is then prefix of $w^2 t^{l(2)-2}$, etc. and prefix of $w^{l(w)}$, so $w^\infty = t^\infty$ and due to minimality $w = t^k$, some $k \geq 2$. Hence $w_2s_2r_2 = wu = t^kw_1s_1r_1$ so the character where $wu$ first differs from $t^\infty$ is at the same time at $s_1$ and $s_2$, hence $w_2 = t^kw_1 = ww_1$, implying $l(w_2) = l(w_1) + l(w)$, but as previously explained, this can occur just for finitely many $B_{t,k}$, which proves the claim.
\end{itemize}    
\end{proof}
Having showed the required statements we are ready to conclude:\\

\textbf{Theorem.} Given $\lambda \in (0,1)$, there is a compact (pseudo-)metric space $(X, d)$ on which we have continuous functions $f, g: X \rightarrow X$ such that given $x, y \in X$ either $d(f(x), f(y)) \leq \lambda d(x,y)$ or $d(g(x), g(y)) \leq \lambda d(x,y)$ holds, but no word in $f, g$ has a fixed point.\\
\section{Further remarks}
Even though we have given an answer to Austin's question and Stein's conjecture, some natural questions remain to be answered. Indeed, what kind of additional assumptions are necessary to ensure that Stein's conjecture is true. Here we mention two theorems related to our problem that could indicate what approach should be taken.\\
\begin{theorem}[\cite{Merry2}]\label{cthmMerryStein} Let $f$ be an operator on a complete metric space such that $\{f, f^2, \dots, f^n\}$ is a contractive family for some $n$. Then $f$ has a unique fixed point.\end{theorem}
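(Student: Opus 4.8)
The plan is to handle uniqueness by a one-line argument and then to prove existence by producing a Cauchy orbit, using a quantitative diameter-contraction estimate in the spirit of the observation made in the outline section.

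\emph{Uniqueness} is immediate: if $x\neq y$ were both fixed, then $f^i(x)=x$ and $f^i(y)=y$ for every $i$, so applying the contractivity to the pair $(x,y)$ would give $d(x,y)=d(f^i(x),f^i(y))\leq\lambda d(x,y)$ for some $i$, forcing $d(x,y)=0$.

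For \emph{existence}, the first step is to record that contractivity of the iterates improves when one allows longer words: applying the defining property twice shows that $\{f,f^2,\dots,f^{2n}\}$ is $\lambda^2$-contractive, and in general $\{f,\dots,f^N\}$ is $\lambda^{\lfloor N/n\rfloor}$-contractive, since from $d(f^i(x),f^i(y))\leq\lambda d(x,y)$ (with $i\leq n$) one applies the property again to $(f^i(x),f^i(y))$ to reach an index $\leq 2n$ with an extra factor $\lambda$, and so on. The second step is a diameter-contraction lemma generalising the two-function computation sketched in the outline: for an $N$-function $\mu$-contractive family and any bounded set $S$ of diameter $D$, some image $f^i(S)$ with $1\leq i\leq N$ has $\diam f^i(S)\leq c_N\mu D$. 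I would prove this by fixing a base point $x\in S$, partitioning $S$ according to which power contracts the distance to $x$, and then, exactly as in subcase B2 of the outline, using the contractive property on the pairs that a given power fails to contract to flip them onto another power, chaining the resulting short hops (each of length at most $\mu D$) back to $x$; the number of hops required is what bounds the loss constant $c_N$.

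With these in hand, choose $N$ so large that $\theta:=c_N\lambda^{\lfloor N/n\rfloor}<1$ and apply the lemma to the tails $T_k=\{f^k(x_0),f^{k+1}(x_0),\dots\}$ of the orbit of an arbitrary $x_0$. Since $f^i(T_k)=T_{k+i}\subseteq T_k$, the diameters $D_k=\diam T_k$ are non-increasing, while the lemma supplies, at each stage, an index $i\leq N$ with $D_{k+i}\leq\theta D_k$; letting $k\to\infty$ in the inequality $D_\infty\leq\theta D_\infty$ forces $D_\infty=0$, so the orbit is Cauchy. Completeness then gives a limit $p$, and since $f$ is a member of the family and hence continuous, $f(p)=\lim_{k\to\infty}f(f^k(x_0))=p$.

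The crux is the diameter lemma, and specifically the control of the loss constant $c_N$: the whole scheme works only because the contraction factor $\lambda^{\lfloor N/n\rfloor}$ improves exponentially in $N$, so it suffices to show that $c_N$ grows sub-exponentially, for which I expect a polynomial bound from the chaining argument. This is where the real difficulty lies, since a naive flipping-and-chaining argument risks a loss that is exponential in $N$, which would then only settle the theorem for small $\lambda$ rather than for every $\lambda\in(0,1)$. A secondary technical point is ensuring the tail diameters $D_k$ are finite (i.e. that a tail of the orbit is bounded), which I would obtain by first running the diameter lemma on finite orbit-windows $\{x_k,\dots,x_{k+M}\}$ together with an inductive bound on the running maximum, and only then passing to the limit $M\to\infty$.
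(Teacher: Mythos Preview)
The paper does not prove this theorem at all: it is merely quoted from \cite{Merry2} in the ``Further remarks'' section as background, with the comment that Merryfield--Rothschild--Stein later removed the continuity hypothesis in \cite{Merry1} and that Arvanitakis gave another proof in \cite{Arvanitakis}. So there is no in-paper argument to compare your proposal against.

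On the substance of your plan: uniqueness and the first step (that $\{f,\dots,f^N\}$ is $\lambda^{\lfloor N/n\rfloor}$-contractive) are fine, and the reduction ``$D_{k+i}\leq\theta D_k$ for some $i\leq N$ $\Rightarrow$ Cauchy orbit $\Rightarrow$ fixed point by continuity'' is correct once you have the diameter lemma and boundedness of a tail. The genuine gap is exactly where you locate it: a sub-exponential bound on $c_N$. The two-function computation in the outline does not scale polynomially under the obvious inductive or chaining generalisation; the natural recursion gives $c_N$ of order $2^N$, which only yields the theorem for $\lambda$ below a threshold depending on $n$. This is not a minor technicality: controlling this loss is precisely the content of the Generalised Banach Contraction Theorem, and the published proofs in \cite{Merry2,Merry1,Arvanitakis} all bring in nontrivial combinatorics (Ramsey-type colourings of the orbit pairs, or tiling arguments from \cite{Jachymski}) to circumvent it, rather than a direct diameter estimate. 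Your hope that ``the chaining argument'' yields a polynomial $c_N$ is therefore the whole theorem in disguise, and as written the proposal does not supply the missing idea. The secondary boundedness issue you flag is real but comparatively routine once the main lemma is in hand.
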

\begin{theorem}[\cite{Austin}]\label{cthmAustin} Let $f,g$ be two operators on a complete metric space such that $\{f,g\}$ is a contractive family and $f,g$ commute. Then, $f$ and $g$ have unique common fixed point.\end{theorem}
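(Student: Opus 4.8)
The plan is to dispose of uniqueness first and then to manufacture a common fixed point as the limit of a carefully chosen orbit. Uniqueness is immediate: if $p\neq q$ were both fixed by $f$ and by $g$, then $d(f(p),f(q))=d(g(p),g(q))=d(p,q)>0$, contradicting that one of $f,g$ must contract the pair $(p,q)$ by the factor $\lambda<1$. For existence, the structural fact I would exploit is that, since $f$ and $g$ commute, every word in $f,g$ equals $f^{a}g^{b}$ as an operator, depending only on the numbers of $a$'s and $b$'s and not on their order. This is exactly what lets me convert per-pair contraction into control of whole finite orbits.

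The engine is a diameter–shrinking statement. Recall from the outline that for any bounded set $S$ of diameter $D$ one of $\diam f(S)$, $\diam g(S)$ is at most $4\lambda D$; more generally I would prove that if $\{h_1,\dots,h_m\}$ is a family for which every pair of points is contracted by some $h_i$ with factor $\mu$, then some $h_i$ satisfies $\diam h_i(S)\le C_m\,\mu\,\diam(S)$, where $C_m$ grows only polynomially in $m$ (the case $m=2$, $C_2=4$, is precisely the outline's estimate). Now fix a length $k$ and consider the commuting family $H=\{f^{a}g^{k-a}:0\le a\le k\}$ of size $k+1$. A greedy contraction argument—at each of $k$ steps apply a map contracting the current pair, then reorder using commutativity—shows that $H$ is $\lambda^{k}$-contractive in the above sense. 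Choosing $k$ so large that $C_{k+1}\lambda^{k}<1$, which is possible since $\lambda<1$, the generalized estimate yields, for every bounded $S$, a member $\Phi\in H$ with $\diam\Phi(S)\le\rho\,\diam(S)$ for a fixed $\rho<1$.

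With this in hand I would build the fixed point. Fix any $x_0$ and set $S^{(0)}=\{f^{i}g^{j}(x_0):i+j\le k\}$, a finite set of finite diameter $\Delta_0$. Iterating the shrinking step produces operators $\Phi_0,\Phi_1,\dots\in H$ and sets $S^{(j+1)}=\Phi_j(S^{(j)})$ with $\diam S^{(j)}\le\rho^{j}\Delta_0$. Writing $W_j=\Phi_{j-1}\cdots\Phi_0$ (so that $W_j=f^{A_j}g^{B_j}$ by commutativity) and $p_j=W_j(x_0)$, commutativity places all four points $p_j$, $f(p_j)=W_j(fx_0)$, $g(p_j)=W_j(gx_0)$ and $p_{j+1}=W_j(\Phi_j x_0)$ inside $S^{(j)}=W_j(S^{(0)})$, because $fx_0$, $gx_0$ and $\Phi_j x_0$ all lie in $S^{(0)}$. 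Hence $d(p_j,p_{j+1})$, $d(p_j,f(p_j))$ and $d(p_j,g(p_j))$ are each at most $\rho^{j}\Delta_0$. The first bound makes $(p_j)$ Cauchy, so by completeness it converges to some $p$; the remaining two bounds, together with continuity of $f$ and $g$, force $f(p)=p$ and $g(p)=p$, so $p$ is the desired common fixed point, unique by the first paragraph.

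The main obstacle is the generalized diameter estimate for $m$ maps. The two-map case works by fixing a base point, splitting $S$ according to which of $f,g$ contracts each pair issuing from that base point, and then either finding a single good center whose image controls everything or falling back to the other map. Extending this dichotomy to $m$ maps requires an induction on $m$ (or, equivalently, allowing $\Phi$ to be a bounded-length \emph{composition} of members of $H$ rather than a single member, which only lengthens the tracked words and enlarges $S^{(0)}$ to a larger orbit ball harmlessly). Extracting from that induction a constant $C_m$ tame enough that $C_{k+1}\lambda^{k}<1$ can be arranged is the one genuinely technical point; everything else is bookkeeping with the commutativity identity $W_j h=h W_j$.
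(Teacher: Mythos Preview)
The paper does not prove this theorem; it appears only in the concluding remarks as a result of Austin, cited to \cite{Austin}, with no argument reproduced. There is thus no proof in the present paper against which to compare your proposal.

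On its own merits: uniqueness is correct, and the observation that commutativity makes $H=\{f^{a}g^{k-a}:0\le a\le k\}$ a $\lambda^{k}$-contractive family of size $k+1$ is sound and is the natural starting point. The gap is exactly where you place it, but it is more than a residual technicality. The two-map dichotomy from the paper's outline does not inductively yield a polynomial $C_m$: the obvious recursion (fix a base point, split off one colour class, handle the remaining $m-1$ classes by the inductive hypothesis) produces $C_m$ growing exponentially in $m$, and then $C_{k+1}\lambda^{k}<1$ is achievable only for $\lambda$ below some fixed threshold, not for every $\lambda\in(0,1)$ as the theorem requires. Your fallback---allowing $\Phi$ to be a bounded-length composition of members of $H$---does not escape this, since any such composition is again of the form $f^{A}g^{B}$, so one faces the same diameter question for a larger commuting family against a proportionally smaller contraction factor, and the same exponential-versus-geometric race recurs. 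Obtaining a subexponential $C_m$, or bypassing the uniform diameter estimate altogether, is the substantive content of the existence proof and requires an idea not present in your proposal; as written, the argument is an outline whose central lemma is neither proved nor reducible to ``bookkeeping.''
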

The theorem~\ref{cthmMerryStein} was proved by Merryfield and Stein in~\cite{Merry2}, and later Merryfield, Rothschild and Stein removed the assumption of continuity in~\cite{Merry1}; this statement is also known as the Generalized Banach Contraction Theorem. In~\cite{Austin}, Austin proved the Theorem~\ref{cthmAustin} and conjectured that the result holds for arbitrarily many commuting operators.\\
Theorem~\ref{thm1} tells us that one should not look for a topological condition, and that algebraic condition as in the theorem of Austin or in the Generalized Banach Contraction Theorem is more convenient. The later type of the condition directly defines the geometry of the problem, and in general the assumption should, vaguely speaking, provide us with a finite-dimensional problem, and forbid compositions of functions behaving very badly, i.e. prevent the tree structure used in the counterexample from occurring. An appropriate assumption could be that $|\{f_{i_1} \circ f_{i_2} \circ \dots \circ f_{i_m}:i_1, \dots, i_m \in \{1,2,\dots, n\}\}|$ is of subexponential or polynomial growth in $m$.\\
To conclude, we raise the following question and conjecture which are extensions of a conjecture of Austin in~\cite{Austin}.
\begin{question} What algebraic conditions should be imposed on a finite contractive family $\{f_1, f_2, \dots, f_n\}$ of operators on a complete metric space to ensure the existence of a composition of the functions $f_i$ with a fixed point? \end{question}
\begin{conjecture} Let $\{f_1, f_2, \dots, f_n\}$ be a finite contractive family of operators on a complete metric space. Suppose that there are $C > 0, k \in \mathbb{N}$ such that $|\{f_{i_1} \circ f_{i_2} \circ \dots \circ f_{i_m}:i_1, \dots, i_m \in \{1,2,\dots, n\}\}| \leq Cm^k$. Then $f_1, \dots, f_n$ have a unique common fixed point.\end{conjecture}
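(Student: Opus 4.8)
I would first dispose of uniqueness, which needs nothing beyond the contractive hypothesis: if $x$ and $y$ were two common fixed points with $x \neq y$, then by definition some $f_i$ contracts the pair $(x,y)$, so $d(x,y) = d(f_i(x), f_i(y)) \leq \lambda d(x,y) < d(x,y)$, a contradiction. Hence at most one common fixed point can exist, and the entire difficulty lies in \emph{existence}.

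\textbf{Reduction to diameter decay.} For existence the plan is to push the ``diameter'' philosophy of the Outline as far as it will go. I would fix a point $x_0$, pass to a bounded forward-invariant set (for instance the closure $S_0$ of the orbit $\{w(x_0): w \text{ a word}\}$, satisfying $f_i(S_0) \subseteq S_0$), and generalise the two-function observation of Section~2 to $n$ functions: there is a constant $K_n$ (the argument given there yields $K_2 = 4$) such that every bounded $T$ has $\min_i \diam(f_i(T)) \leq K_n \lambda \diam(T)$. The proof is the same style of case analysis, partitioning $T$ according to which of $f_1, \dots, f_n$ contracts the pairs emanating from a fixed point and routing through boundedly many hops of length $\leq \lambda \diam(T)$. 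The objective is then to upgrade this into \emph{genuine decay}, i.e. to produce words $w$ with $\diam(\overline{w(S_0)})$ arbitrarily small; once that is in hand, a fixed point can be extracted by completeness (last paragraph). Establishing boundedness of the initial orbit is a preliminary I would handle either by showing directly that polynomial growth forces some orbit to stay bounded, or by verifying a posteriori that the constructed trajectory is Cauchy.

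\textbf{The main obstacle: from a constant factor to true shrinking.} Iterating the diameter lemma along a single branch only yields a word $w_m$ of length $m$ with $\diam(w_m(S_0)) \leq (K_n \lambda)^m \diam(S_0)$, which is worthless when $K_n\lambda \geq 1$; and one cannot improve this by branch selection, since an individual $f_i$ carries no Lipschitz bound, so a single bad letter may expand the diameter arbitrarily. This is exactly where the polynomial growth hypothesis must enter, and I expect it to be the crux of the whole argument. The mechanism I would pursue is the pairwise form of contraction: for each ordered pair $(x,y) \in S_0 \times S_0$, building a composition from the inside out and invoking the contractive property at each stage produces a length-$m$ word $v$ with $d(v(x), v(y)) \leq \lambda^m d(x,y)$. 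The decisive point is that, although $v$ depends on the pair, it ranges over the set of \emph{distinct} length-$m$ functions, of size at most $Cm^k$. Fixing a base point and sorting the remaining points of $S_0$ by which of these $\leq Cm^k$ words contracts the corresponding pair partitions $S_0$ into polynomially many classes, each of which a \emph{single} word collapses into a ball of radius $\lambda^m \diam(S_0)$. The aim is to stitch these class-words together, applying them in succession while controlling the expansion they inflict on the other classes, into one word $w$ with $\diam(w(S_0)) \leq P(m)\,\lambda^m \diam(S_0)$ for a fixed polynomial $P$; since $\lambda < 1$ this tends to $0$. Controlling the cross-expansion under composition, that is proving that the overhead of merging is governed by the number $Cm^k$ of contraction types rather than by the $K_n^{\,m}$ of naive iteration, is the step I expect to demand by far the most work, and it is precisely where subexponential growth is indispensable: the free binary tree underlying this paper's counterexample is the obstruction that polynomial growth removes.

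\textbf{Producing and identifying the fixed point.} Granting the decay $\inf_w \diam(\overline{w(S_0)}) = 0$, extraction should be comparatively soft. Using forward invariance, $w(S_0) \subseteq S_0$, together with continuity of the $f_i$ (hence of every composition), I would build a nested sequence of nonempty closed sets $\overline{u_m(S_0)}$ of vanishing diameter: at each stage I append, on the inner side, a strongly contracting suffix $v_m$ supplied by the decay statement, noting that a continuous map sends a set of small diameter concentrated near a point to a set of small diameter, which keeps the images simultaneously nested and shrinking. Their intersection is a single point $x^\ast$, and a short argument with continuity and the contractive inequality then forces $f_i(x^\ast) = x^\ast$ for every $i$, since $f_i$ sends each terminal image into another member of the collapsing family. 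This part should follow the pattern by which Theorem~\ref{cthmMerryStein} and Theorem~\ref{cthmAustin} extract their fixed points on general complete spaces; the only delicate point is that, in the absence of compactness, one must take care that the shrinking images actually converge (not merely shrink), which I would ensure by choosing the words so that the associated images form a Cauchy net. Combined with the uniqueness already proved, this produces the unique common fixed point.
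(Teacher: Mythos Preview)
The statement you are attempting to prove is labelled a \emph{Conjecture} in the paper, not a theorem; the paper offers no proof whatsoever, only the remark that an algebraic growth condition of this sort ``could be'' the right hypothesis. So there is nothing to compare your argument against: you are trying to settle an open problem, and you should be aware that what you have written is a strategy outline, not a proof.

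On the substance of your outline, the honest gaps you flag are real and, as far as I can see, unresolved. Two in particular: (i) boundedness of the orbit $S_0$ is not automatic on a general complete space, and neither of your two suggested fixes is carried out --- polynomial growth of the \emph{semigroup} says nothing directly about metric boundedness of an orbit, and ``verifying a posteriori that the trajectory is Cauchy'' presupposes the very decay you are trying to establish; (ii) the ``stitching'' step is the whole problem. You correctly observe that for each pair there is a length-$m$ word contracting it by $\lambda^m$, and that only $Cm^k$ distinct such functions arise; but composing those $Cm^k$ words in any order applies each of them to sets it was \emph{not} chosen to contract, and since the individual $f_i$ carry no Lipschitz bound the intermediate expansion is not controlled by the number of function-types but potentially by the worst pointwise behaviour of each map on an uncontrolled set. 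Your sentence ``the overhead of merging is governed by the number $Cm^k$ of contraction types rather than by the $K_n^{\,m}$ of naive iteration'' is exactly the assertion that needs proof, and nothing in your write-up supplies a mechanism for it. Until that step has content, the proposal does not advance beyond the heuristic that motivated the conjecture in the first place.
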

\textbf{Acknowledgements.} My thanks go to Trinity College whose funding and support made this project possible.\\ 

\begin{flushleft}
Luka Mili\'{c}evi\'{c}\\
Trinity College,\\
Cambridge CB2 1TQ\\
United Kingdom\\
lm497@cam.ac.uk
\end{flushleft}
\end{document}